\begin{document}
\setlength{\baselineskip}{16pt}

\parindent 0.5cm
\evensidemargin 0cm \oddsidemargin 0cm \topmargin 0cm \textheight
22cm \textwidth 16cm \footskip 2cm \headsep 0cm

\newtheorem{thm}{Theorem}[section]
\newtheorem{cor}{Corollary}[section]
\newtheorem{lem}{Lemma}[section]
\newtheorem{prop}{Proposition}[section]
\newtheorem{defn}{Definition}[section]
\newtheorem{rk}{Remark}[section]
\newtheorem{nota}{Notation}[section]
\newtheorem{Ex}{Example}[section]
\def\nm{\noalign{\medskip}}

\numberwithin{equation}{section}

\def\p{\partial}
\def\I{\textit}
\def\R{\mathbb R}
\def\C{\mathbb C}
\def\u{\underline}
\def\l{\lambda}
\def\a{\alpha}
\def\O{\Omega}
\def\e{\epsilon}
\def\ls{\lambda^*}
\def\D{\displaystyle}
\def\wyx{ \frac{w(y,t)}{w(x,t)}}
\def\imp{\Rightarrow}
\def\tE{\tilde E}
\def\tX{\tilde X}
\def\tH{\tilde H}
\def\tu{\tilde u}
\def\d{\mathcal D}
\def\aa{\mathcal A}
\def\DH{\mathcal D(\tH)}
\def\bE{\bar E}
\def\bH{\bar H}
\def\M{\mathcal M}
\renewcommand{\labelenumi}{(\arabic{enumi})}

\def\disp{\displaystyle}
\def\undertex#1{$\underline{\hbox{#1}}$}
\def\card{\mathop{\hbox{card}}}
\def\sgn{\mathop{\hbox{sgn}}}
\def\exp{\mathop{\hbox{exp}}}
\def\OFP{(\Omega,{\cal F},\PP)}
\newcommand\JM{Mierczy\'nski}
\newcommand\RR{\ensuremath{\mathbb{R}}}
\newcommand\CC{\ensuremath{\mathbb{C}}}
\newcommand\QQ{\ensuremath{\mathbb{Q}}}
\newcommand\ZZ{\ensuremath{\mathbb{Z}}}
\newcommand\NN{\ensuremath{\mathbb{N}}}
\newcommand\PP{\ensuremath{\mathbb{P}}}
\newcommand\abs[1]{\ensuremath{\lvert#1\rvert}}

\newcommand\normf[1]{\ensuremath{\lVert#1\rVert_{f}}}
\newcommand\normfRb[1]{\ensuremath{\lVert#1\rVert_{f,R_b}}}
\newcommand\normfRbone[1]{\ensuremath{\lVert#1\rVert_{f, R_{b_1}}}}
\newcommand\normfRbtwo[1]{\ensuremath{\lVert#1\rVert_{f,R_{b_2}}}}
\newcommand\normtwo[1]{\ensuremath{\lVert#1\rVert_{2}}}
\newcommand\norminfty[1]{\ensuremath{\lVert#1\rVert_{\infty}}}

\title{Non-wandering points for autonomous/periodic parabolic equations on the circle}

\author {
\\
Wenxian Shen\thanks{Partially supported by the NSF grant DMS--1645673.}\\
Department of Mathematics and Statistics\\
 Auburn University, Auburn, AL 36849, USA
\\
\\
Yi Wang\thanks{Partially supported by NSF of China No.11825106, 11771414, Wu Wen-Tsun Key Laboratory,  University of Science and Technology of China.} \\
School of Mathematical Science\\
 University of Science and Technology of China
\\ Hefei, Anhui, 230026, P. R. China
\\
\\
 Dun Zhou\thanks{Partially supported by NSF of China No.11971232, 11601498 and the Fundamental Research Funds for the Central Universities No.30918011339.}\\
 School of Science
 \\ Nanjing University of Science and Technology
 \\ Nanjing, Jiangsu, 210094, P. R. China
 \\
\\
}
\date{}

\maketitle

\begin{abstract}
We study the properties of non-wandering points of the following scalar reaction-diffusion equation on the circle $S^1$,
\begin{equation*}
u_{t}=u_{xx}+f(t,u,u_{x}),\,\,t>0,\,x\in S^{1}=\mathbb{R}/2\pi \mathbb{Z},
\end{equation*}
where $f$ is independent of $t$ or $T$-periodic in $t$.  Assume that the equation admits a compact global attractor. It is proved that, any non-wandering point is a limit point of the system (that is, it is a point in some $\omega$-limit set). More precisely, in the autonomous case, it is proved that
any non-wandering point is either a fixed point or generates a rotating wave on the circle.
In the periodic case, it is proved that any non-wandering point is a periodic point or generates a rotating wave on a torus.
 In  particular, if $f(t,u,-u_x)=f(t,u,u_x)$, then any  non-wandering point is a fixed point in the autonomous case, and is a periodic point in the periodic case.
\end{abstract}

\section{Introduction}

Non-wandering set, as a common invariant set, plays an important role in the characterization of the overall behavior of a dynamical system. For a dynamical system on a compact space, the non-wandering set, which is a non-empty closed invariant set, is the hub of recurrence behavior, since it contains all $\omega$-limit and $\alpha$-limit points and recurrent points, including, naturally all periodic orbits. All orbits will stay in any neighborhood of the non-wandering set as the time tends to infinity. Conversely, a non-wandering point may not be a limit point, see an example in the Appendix.
From the view of topological dynamical systems, for a compact dynamical system, by using Birkhoff Ergodic Theorem, the supports of invariant probability measures are contained in the non-wandering set; and hence by the Variational Principle, the topological entropy of the system equals to the topological entropy restricted to the non-wandering set (see \cite{Katok-Has,Ruelle89}). From the view of differentiable dynamical systems, the structure of a non-wandering set is strongly related to the structural stability and $\Omega$-stability of the system (see \cite{Palis1968,Palis-Smale,Smale}).
 There are many works concerning with the structure of non-wandering sets and its related properties (see, e.g., \cite{ABD,BCMN,CN,Will,Ye} and the references therein).

Although, Andronov et al. \cite{ALGM} have already given a classification for the non-wandering points of the two dimensional autonomous system. Unfortunately, there is no general way to characterize the non-wandering points for higher dimensional systems. Therefore, it is necessary and important to select some typical systems to study the structure of its non-wandering set.

A typical model of infinite dimensional dynamical system is the following reaction diffusion equation on the circle $S^1$,
\begin{equation}\label{general-equa}
u_{t}=u_{xx}+f(t,u,u_{x}),\,\,t>0,\,x\in S^{1}=\mathbb{R}/2\pi \mathbb{Z}.
\end{equation}

Our goal is to thoroughly understand the structure of the non-wandering set of the above equation \eqref{general-equa}, in the following two circumstances:
\begin{equation}\label{autonomous}
  f(t,u,u_x)=f(u,u_x),\quad \forall\,  t\in \mathbb{R},
\end{equation}
or
\begin{equation}\label{periodic-eq}
 f(t+T,u,u_x)=f(t,u,u_x),\quad \forall\,  t\in \mathbb{R},\ \text{and some }T>0.
\end{equation}

In the autonomous case, that is, $f(t,u,u_x)$ is as in \eqref{autonomous},
the study of the asymptotic behavior of bounded solutions of \eqref{general-equa} can be originated from the introduction of zero number (or lap number) function in Angenent and Fiedler \cite{Angenent1988}, Massatt \cite{Massatt1986} and Matano \cite{Matano}. In \cite{Angenent1988}, \cite{Massatt1986} and \cite{Matano}, it was  proved independently that any $C^1$-bounded solution of \eqref{general-equa} approaches a set of functions of the form
\[
\Sigma \phi=\{\phi(\cdot+a)\,|\, a\in S^1\},
\]
where $\phi$ satisfies the following equation
\begin{equation*}
  \phi_{xx}+c\phi_x+f(\phi,\phi_x)=0,\quad x\in S^1
\end{equation*}
for some $c\in \mathbb{R}$. In other words, any $C^1$-bounded solution of \eqref{general-equa} approximates either a constant equilibrium (i.e., $\phi$ is constant and $c=0$), a rotating wave (i.e., $\phi$ is nonconstant and $c\neq 0$) or a one-dimensional manifold of standing waves (i.e., $\phi$ is nonconstant and $c=0$). Particularly, if $f(u,u_x)=f(u,-u_x)$, then any bounded solution is closing to an equilibrium (see \cite{Matano}). Later, Fiedler and Mallet-Paret \cite{Fiedler} generalized these results to $f$ in \eqref{autonomous} depending on $x$ ($f$ is spatially-inhomogeneous), and they obtained the celebrated Poincar\'{e}-Bendixson type Theorem: any $\omega$-limit set of  \eqref{general-equa} is either a single periodic orbit or it consists of equilibria and connecting (homoclinic and heteroclinic) orbits.
 It should also be pointed out that, in the situation that $f$ depends on $x$,
 transversality of the stable and unstable manifolds of hyperbolic equilibria and periodic orbits, and generic hyperbolicity have been established in \cite{CR,JR1}. After then, Joly and Raugel \cite{JR2} proved the generic Morse-Smale property for the spatially-inhomogeneous system with a compact global attractor.

Assume that \eqref{general-equa}+\eqref{autonomous} admits a compact global attractor $\mathcal{A}$,  Matano and Nakamura \cite{Ma-Na},  and Fiedler, Rocha and Wolfrum \cite{FRW} investigated the structure of $\mathcal{A}$ in succession. In \cite{Ma-Na}, Matano and Nakamura obtained that $\mathcal{A}$ is the graph of a $2[N/2]+1$-dimensional mapping, where $N$ is the maximal value of the generalized Morse index of equilibria and periodic solutions, and they also proved that there exists no homoclinic orbit or heteroclinic cycle.  Fiedler, Rocha and Wolfrum in \cite{FRW} established a necessary and sufficient condition for the existence of a heteroclinic connecting orbits between hyperbolic equilibria or rotating waves.

 By the work of Joly and Raugel \cite{JR2}, for a {\it generic} $f=f(x,u,u_x)$ (in the sense of $C^2$-Whitney topology) such that  \eqref{general-equa}+\eqref{autonomous} admits a compact global attractor,  the  non-wandering set of  \eqref{general-equa}+\eqref{autonomous} consists of finitely many  hyperbolic equilibria
 and \eqref{general-equa}+\eqref{autonomous} is of the so called Morse-Smale property. It  needs to emphasize that, from the view of the equation, for a given $f$, it is unclear whether {it} belongs to that ``generic'' point. Thus, an ideal state is that we know the structure of the non-wandering set of \eqref{general-equa} for general $f$. However, to the best of our knowledge, there is still no such specific description at all, up to now.

In the time periodic case, that is $f(t,u,u_x)$ satisfying
\eqref{periodic-eq}, Sandstede and Fiedler \cite{SF1} proved  that any  $\omega$-limit set of \eqref{general-equa}
can be viewed as a subset of the two-dimensional torus $\mathcal{T}^1\times S^1$ carrying a linear flow. Particularly, if $f(t,u,u_x)=f(t,u,-u_x)$, then any $\omega$-limit set of   \eqref{general-equa} is  a periodic orbit (see Chen and Matano \cite{Chen1989160}).
{Yet}, it remains unknown for the structure of non-wandering set for \eqref{general-equa}+\eqref{periodic-eq}
 and whether \eqref{general-equa}+\eqref{periodic-eq}  is of the so called Morse-Smale property.
 It should be pointed out that  the dynamics of \eqref{general-equa} with  the general time periodic nonlinearity $f=f(t,x,u,u_x)$ could be very complicated. Actually, Sandstede and Fiedler \cite{SF1} have shown that any time-periodic planar vector field can  be embedded into \eqref{general-equa}
 with certain nonlinearity $f=f(t,x,u,u_x)$ (see also the comments in \cite{Hale}).

 The current paper is devoted to investigating the structure of the non-wandering set for \eqref{general-equa} in the autonomous case \eqref{autonomous}, as well as the periodic case \eqref{periodic-eq}. To carry out our study, we assume throughout the rest of the paper that $f\in C^2(\mathbb{R}\times\mathbb{R}\times\mathbb{R},\mathbb{R})$ and $X$ is the fractional power space associated with the operator $u\rightarrow -u_{xx}:H^{2}(S^{1})\rightarrow L^{2}(S^{1})$, denoted by $A_0$, satisfies $X\hookrightarrow C^{1}(S^{1})$ (that is, $X$ is compactly embedded in $C^{1}(S^{1})$). By the standard theory for parabolic equations (see \cite{Hen}), for any $u_0\in X$, \eqref{general-equa} defines (locally) a unique solution $u(t,\cdot;u_0)$ in $X$ with $u(0,x;u_0)=u_0(x)$. It follows from \cite{Hen} (see also \cite{Hess,Mierczynski}) and the standard a priori estimates for parabolic equations, if $u(t,\cdot;u_0)$ is bounded in $X$ in the existence interval of the solution, then $u$ is a globally defined classical solution.

 Among others, we will prove

\begin{itemize}
 \item[$\bullet$] (see {\bf Theorem} \ref{aut-thm}) {\it Assume that {\rm \eqref{general-equa}+\eqref{autonomous}} admits a compact global attractor
 and $u_0$ is a non-wandering point of {\rm \eqref{general-equa}+\eqref{autonomous}}. Then $u_0$ is an $\omega$-limit point, and hence is either  a fixed point (i.e., $u(t,x;u_0)\equiv u_0(x)$)
 or generates a rotating wave
 on the circle (i.e., $u_0(x)$ is not a constant function and $u(t,x;u_0)=u_0(x-ct)$ for some  $c\not =0$).
 Particularly, if $f(u,-u_x)=f(u,u_x)$, then $u_0$ is a fixed point.}

 \item[$\bullet$] (see {\bf Theorem} \ref{peri-thm}) {\it Assume that  the associated Poincar\'{e} map $P$ of {\rm \eqref{general-equa}+\eqref{periodic-eq}} admits a compact global attractor and $u_0$ is a non-wandering point of $P$, then
      $u_0$ is an $\omega$-limit point of $P$, and hence is either a periodic point of {\rm \eqref{general-equa}+\eqref{periodic-eq}} (i.e.  $Pu_0=u_0$) or generates a rotating wave on the torus (i.e. $u_0(x)$ is not a constant function and  $(Pu_0)(x)=u_0(x-r)$ for some $r\not =0$).
      Particularly, if $f(t,u,-u_x)=f(t,u,u_x)$, then $u_0$ is a fixed point of $P$.}
\end{itemize}

We remark that the authors of \cite{JR2} obtained some  characterization of the structure of non-wandering set
of  \eqref{general-equa}+\eqref{autonomous} as well as the Morse-Smale property of  \eqref{general-equa}+\eqref{autonomous}  when $f=f(x,u,u_x)$ is such that  \eqref{general-equa}+\eqref{autonomous} admits a compact global attractor;  any equilibrium point or periodic point is hyperbolic;  there is no homoclinic orbit; and  all the heteroclinic orbits are transversal. Therefore, the characterization of non-wandering set of \eqref{general-equa}+\eqref{autonomous} established in this paper is more general than that in \cite{JR2}, {when $f=f(u,u_x)$}. We also remark that  the results established in this paper for the periodic system \eqref{general-equa}+\eqref{periodic-eq}
 pave the way to prove the Morse-Smale property of this system. The reader is referred to Section 5 for more concluding remarks on our results.

The rest of this paper is organized as follows. In Section 2, we summarize some necessary concepts and properties to be used in the proofs of the main results. In Sections 3 and  4, we give characterizations for non-wandering points of \eqref{general-equa} in autonomous and periodic cases, respectively. We provide in Section 5 several concluding remarks on the assumptions,  the main results, and the techniques established in this paper.
In the Appendix, we present an example which shows for a general dynamical system, the non-wandering set and the union of all the limit sets may not be the same.

\section{Preliminaries}
In this section, we introduce some necessary concepts and properties to be used in the later.
We first list some properties of the zero number function for some associated linear equation of \eqref{general-equa}; and then generalize the roughness of exponential dichotomy on the same base flow to different base flows; finally we introduce the so called Sacker-Sell spectrum, invariant spaces and local invariant manifolds associated with \eqref{general-equa}, and the already established relationship between zero number function and these invariant spaces.

\subsection{Zero number function for linear parabolic equations on $S^1$}
Given a $C^{1}$-smooth function $u:S^{1}\rightarrow \mathbb{R}$, the zero number of $u$ is defined as
$$Z(u(\cdot))={\rm card}\{x\in S^{1}:u(x)=0\}.$$
Consider the following linear equation:
\begin{equation}\label{linear-equa}
v_t= v_{xx}+a(t,x)v_x+b(t,x)v,\quad x\in S^1,
\end{equation}
where $a,b $ are bounded continuous functions.

The following lemma was originally appeared in \cite{2038390} and \cite{H.MATANO:1982}, and improved in \cite{Chen98}.

\begin{lem}\label{zero-number}
Let $v(t,x)$ be a nontrivial solution of \eqref{linear-equa} with $v(0,x)=v_0(x)\in H^1(S^1)$.
Then the following properties holds:
\begin{itemize}
\item[{\rm (a)}] $Z(v(t,\cdot))<\infty$ for any $t>0$ and is non-increasing in $t$;

\item[{\rm (b)}]  $Z(v(t,\cdot))$ drops at $t_{0}$ if, and only if, $v(t_{0},\cdot)$ has a
multiple zero in $S^{1}$;

\item[{\rm (c)}]  $Z(v(t,\cdot))$ can drop only finite many times, and there exists a $T_0>0$
such that $v(t,\cdot)$ has only simple zeros in $S^{1}$ as $t\geq T_0$ (hence
$Z(v(t,\cdot))=constant$ as $t\geq T_0$).
\end{itemize}
\end{lem}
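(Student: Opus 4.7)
The plan is to follow the Angenent--Matano approach: combine parabolic smoothing of $v(t,\cdot)$ in the space variable with a local analysis of the zero set near multiple zeros. For the finiteness assertion in (a), parabolic regularity of \eqref{linear-equa} with bounded continuous $a,b$ yields $v(t,\cdot)\in C^{\infty}(S^{1})$ for every $t>0$, and a local unique-continuation argument in $x$ shows that the zero set of $v(t,\cdot)$ is discrete; compactness of $S^{1}$, together with $v(t,\cdot)\not\equiv 0$, then forces $Z(v(t,\cdot))<\infty$.

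For the non-increase statement in (a) together with the ``only if'' direction of (b), at any $t_0>0$ where $v(t_0,\cdot)$ has only simple zeros the implicit function theorem at each zero $x_i$ (where $v_x(t_0,x_i)\neq 0$) produces a smooth local curve of zeros $t\mapsto x_i(t)$, and a standard continuity-and-compactness argument confirms that all zeros of $v(t,\cdot)$ for $t$ near $t_0$ arise this way, so $Z$ is locally constant. The ``if'' direction of (b) and the strict drop at a multiple zero $(t_0,x_0)$ I would handle by a parabolic blow-up
\begin{equation*}
v^{\lambda}(s,y)=\lambda^{-n}\,v(t_0+\lambda^{2}s,\,x_0+\lambda y),
\end{equation*}
where $n\geq 2$ is the vanishing order of $v(t_0,\cdot)$ at $x_0$. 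Interior parabolic estimates force $v^{\lambda}\to P_n$ on compact sets as $\lambda\downarrow 0$, where $P_n$ is the heat polynomial of degree $n$ matching the jet of $v$ at $(t_0,x_0)$. The zeros of $P_n$ on slices $\{s=\text{const}<0\}$ strictly outnumber those on $\{s=\text{const}>0\}$; combined with persistence of simple zeros this gives $Z(v(t_0^{+},\cdot))<Z(v(t_0^{-},\cdot))$.

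Part (c) is then a clustering argument: drop times of $Z$ cannot accumulate at any $t_1>0$, for such a cluster would force infinitely many distinct multiple zeros accumulating at time $t_1$, hence (by the finiteness already proved at $t_1$) a single point of infinite vanishing order, hence $v(t_1,\cdot)\equiv 0$ by parabolic unique continuation, contradicting the nontriviality of $v$. Integer-valuedness of $Z$ together with the uniform bound just obtained yields stabilization for $t\geq T_0$, after which (b) forces only simple zeros. The main obstacle is the blow-up step: upgrading mere boundedness of $a,b$ to uniform convergence $v^{\lambda}\to P_n$ requires delicate parabolic H\"older/Schauder estimates on the rescaled equations, and this is precisely where the refinement of \cite{Chen98} over \cite{H.MATANO:1982,2038390} enters. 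I would expect to spend most of the technical effort there.
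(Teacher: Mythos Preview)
The paper does not prove this lemma at all: it simply records the statement and attributes it to Angenent~\cite{2038390}, Matano~\cite{H.MATANO:1982}, and the refinement by Chen~\cite{Chen98}. So there is no ``paper's own proof'' to compare against; the authors use the result as a black box throughout.

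Your outline is a faithful sketch of exactly the argument in those references. The smoothing step and the implicit-function-theorem argument for local constancy at simple zeros are standard; the blow-up to a caloric polynomial at a multiple zero is Angenent's idea; and you correctly identify that the passage from continuous (rather than H\"older or differentiable) coefficients $a,b$ to the full statement is where Chen's strong unique continuation theorem~\cite{Chen98} is needed. One small caution on (c): your clustering argument as written handles accumulation of drop times from above but you should also note that the integer-valued, non-increasing, bounded-below function $Z$ can only drop finitely often on $(0,\infty)$ once you know drops cannot accumulate at any finite time --- this is immediate, but phrase it so the existence of $T_0$ is a direct consequence. Otherwise the proposal is sound, and if anything goes beyond what the paper requires of you.
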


\subsection{Exponential dichotomy for linear parabolic equations on $S^1$}
Hereafter, we always assume that $X$ is a fractional power space associated with the operator $u\rightarrow
-u_{xx}:H^{2}(S^{1})\rightarrow L^{2}(S^{1})$, denoted by $A_0$, satisfying $X\hookrightarrow C^{1}(S^{1})$ (that is, $X=D(A_0^\alpha)$ for some $0<\alpha<1$ and is compactly embedded into $C^{1}(S^{1})$).

{Assume moreover in equation \eqref{linear-equa}, $b(t,x)$ is uniformly continuous on $\mathbb{R}\times S^1$, $a(t,x)\in C^1(\mathbb{R}\times S^1)$, $a_t(t,x)$ and $a_x(t,x)$ are bounded.} Define $H(a,b)={\rm cl}\{(a\cdot \tau,b\cdot \tau)=(a(t+\tau,\cdot),b(t+\tau,\cdot)):\tau\in \RR\}$, where the closure is taken in the compact open topology. By the Ascoli--Arzela theorem, $H(a,b)$ is a compact metric space. Let $\Omega=H(a,b)$, the action of time-translation defines a compact flow on $\Omega$. Denotes the element in $\Omega$ by $\omega=(\omega_a,\omega_b)$. Then, equation \eqref{linear-equa} gives rise to a family of equations,
\begin{equation}\label{linear-equa-induced}
v_t=v_{xx}+a(\omega\cdot t,x)v_x+b(\omega\cdot t,x)v,\,\,t>0,\,x\in S^{1}=\mathbb{R}/2\pi \mathbb{Z},
\end{equation}
where $a(\omega\cdot t,\cdot)=\omega_a\cdot t$, $b(\omega\cdot t,\cdot)=\omega_b\cdot t$.

For any $v\in X$, let $\psi(t,x;v, \omega)$ be the solution  of \eqref{linear-equa-induced} with $\psi(0,x;v,\omega)=v(x),x\in S^1$; and write
 $$\Psi(t,\omega):X\rightarrow X;v(\cdot)\mapsto \Psi(t,\omega)v:=\psi(t,\cdot;v, \omega)$$ as the evolution operator generated by  \eqref{linear-equa-induced}. Then we have the following

\begin{lem}\label{bounded-operator}
There exists $M>0$ (depending only on $\alpha$, $\|a\|_{\infty}$ and $\|b\|_{\infty}$) such that $\|\Psi(t,\omega)\|<M$, for all $0\leq t\leq 1$ and $\omega\in\Omega$.
\end{lem}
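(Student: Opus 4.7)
The plan is to prove the bound via the variation of constants formula and a singular Gronwall argument. Write equation \eqref{linear-equa-induced} in the abstract form $v_t + A_0 v = B(\omega\cdot t) v$, where the perturbation $B(\omega\cdot t) v := a(\omega\cdot t,\cdot)\,v_x + b(\omega\cdot t,\cdot)\,v$ is of lower order (at most first derivatives). Since $A_0$ generates an analytic semigroup $e^{-A_0 t}$ on $L^2(S^1)$, Duhamel's formula gives, for any $v_0\in X$,
\begin{equation*}
\Psi(t,\omega)v_0 = e^{-A_0 t} v_0 + \int_0^t e^{-A_0(t-s)}\,B(\omega\cdot s)\,\Psi(s,\omega) v_0\, ds.
\end{equation*}

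Next I would insert the standard fractional power estimates from Henry's book: there exists a constant $C=C(\alpha)$ such that $\|e^{-A_0 t}\|_{X\to X}\le C$ and $\|e^{-A_0 t}\|_{L^2\to X} = \|A_0^\alpha e^{-A_0 t}\|_{L^2\to L^2}\le C t^{-\alpha}$ for all $0<t\le 1$. The key observation is that the embedding $X\hookrightarrow C^1(S^1)$ gives $\|v_x\|_{L^\infty}+\|v\|_{L^\infty}\le C\|v\|_X$, and hence
\begin{equation*}
\|B(\omega\cdot s)\,\Psi(s,\omega) v_0\|_{L^2}\le C_0\bigl(\|a\|_\infty+\|b\|_\infty\bigr)\,\|\Psi(s,\omega) v_0\|_X,
\end{equation*}
uniformly in $\omega\in\Omega$. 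Taking $X$-norms in Duhamel's formula and combining these estimates yields
\begin{equation*}
\|\Psi(t,\omega) v_0\|_X \le C\|v_0\|_X + K\int_0^t (t-s)^{-\alpha}\,\|\Psi(s,\omega) v_0\|_X\, ds,\qquad 0\le t\le 1,
\end{equation*}
with $K$ depending only on $\alpha$, $\|a\|_\infty$ and $\|b\|_\infty$.

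Finally, because $\alpha<1$ the kernel $(t-s)^{-\alpha}$ is integrable, so I would close the argument by invoking the singular (Henry–Gronwall) inequality to obtain
$\|\Psi(t,\omega) v_0\|_X\le M\|v_0\|_X$ for all $0\le t\le 1$, $\omega\in\Omega$, with $M=M(\alpha,\|a\|_\infty,\|b\|_\infty)$; taking the supremum over $v_0$ of unit $X$-norm gives the stated operator bound. The only step that requires any care is checking that the first-order term $a(\omega\cdot t,x) v_x$ really is controlled by $\|v\|_X$ uniformly in $\omega$—this is exactly where the hypothesis $X\hookrightarrow C^1(S^1)$ is used—and verifying that the resulting $M$ genuinely depends only on the parameters listed (and not on $\omega$), which follows because the estimate on $\|B(\omega\cdot s)v\|_{L^2}$ uses only $\|a\|_\infty$ and $\|b\|_\infty$.
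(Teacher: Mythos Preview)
Your proposal is correct and follows essentially the same route as the paper: Duhamel's formula with the sectorial operator $A_0$, the fractional power estimate $\|A_0^\alpha e^{-A_0 t}\|_{L^2\to L^2}\lesssim t^{-\alpha}$, the bound $\|B(\omega\cdot s)v\|_{L^2}\le(\|a\|_\infty+\|b\|_\infty)\|v\|_X$ (using $X\hookrightarrow C^1$), and then Henry's singular Gronwall inequality (the paper cites \cite[Lemma~7.1.1]{Hen}). The only cosmetic difference is a sign convention on $A_0$ and your more explicit remark on where the embedding $X\hookrightarrow C^1(S^1)$ enters.
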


\begin{proof}
Let $A_0v=v_{xx}$ and $A(\omega\cdot t)v=a(\omega\cdot t,\cdot)v_x+b(\omega\cdot t,\cdot)v$.  Then  for any $v_0\in X$ and $t\ge 0$,
$$
\Psi(t,\omega)v_0=e^{A_0 t}v_0+\int_0^t e^{A_0(t-s)} A(\omega\cdot s) \Psi(s,\omega)v_0 ds.
$$
Note that
$$
\|e^{A_0t}v_0\|=\|e^{A_0t}A_0^\alpha v_0\|_{L^2}\le \|A_0^\alpha v_0\|_{L^2}=\|v_0\|.
$$
Hence
\begin{align*}
\|\Psi(t,\omega)v_0\| &\le \|v_0\|+\int_0^ t \|e^{A_0(t-s)}A(\omega\cdot s) \Psi(s,\omega)v_0\| ds\\
& = \|v_0\|+\int_0^ t \|A^{\alpha}_0e^{A_0(t-s)} A(\omega\cdot s) \Psi(s,\omega)v_0\|_{L^2}ds.\\
& \le \|v_0\| + \int_0^ t\frac{1}{(t-s)^\alpha}\|\Psi(s,\omega)v_0\|_{L^2}ds\\
& \le \|v_0\| + (\|a\|_{\infty}+\|b\|_{\infty})\int_0^ t\frac{1}{(t-s)^\alpha}\|\Psi(s,\omega)v_0\|ds.
\end{align*}
It then follows from \cite[Lemma 7.1.1 or Exercise 4*, page 190]{Hen} that $\|\Psi(t,\omega)\|\leq M(\alpha,\|a\|_{\infty},\|b\|_{\infty})$.
\end{proof}

Therefore, $\Psi(t,\omega)\in \mathcal{L}(X)$, where $\mathcal{L}(X)$ denotes the linear bounded operator on $X$. Moreover, for any $t>0$ and $\omega\in \Omega$, $\Psi(t,\omega)$ is injective, and by standard a priori estimates for parabolic equations, $\Psi(t,\omega)$ is a compact operator from $X$ to $X$.
\medskip

Put $\Pi^t:X\times \Omega\to X\times \Omega$ as
\begin{equation}\label{linear-equa-semi}
\Pi^t(v,\omega)=(\Psi(t,\omega)v,\omega\cdot t)\,\text{ for } t\ge 0.
\end{equation}
Then the family of  maps $\{\Pi^t\}_{t\ge 0}$ so defined is a linear skew-product semiflow on $X\times\Omega$ (see, e.g. \cite[p.57]{Mierczynski}).
We say $\Psi$ admits an {\it exponential dichotomy over} (ED) $\Omega$ if there exist $K>0$, $\beta>0$ and projections $P(\omega):X\rightarrow X$ {(here, $P(\omega)$ varies continuously in $\omega$)} such that for all $\omega \in \Omega$, $\Psi(t,\omega)|_{R(P(\omega))}:R(P(\omega))\rightarrow R(P(\omega\cdot t))$ is an isomorphism satisfying $\Psi(t,\omega)P(\omega)=P(\omega \cdot t)\Psi(t,\omega)$, $t\in \mathbb{R}^+$; moreover,
\begin{equation}\label{exponential-dicho-express}
\begin{split}
\|\Psi(t,\omega)(I-P(\omega))\|\leq Ke^{-\beta t},\quad t\geq 0,\\
\|\Psi(t,\omega)P(\omega)\|\leq Ke^{\beta t},\quad t\leq 0.
\end{split}
\end{equation}
Here $R(P(\omega))$ is the range of $P(\omega)$.
\medskip

Now, consider the perturbation equation of \eqref{linear-equa}:
\begin{equation}\label{linear-equa-pertur}
v_t= v_{xx}+(a(t,x)+\epsilon^a(t,x))v_x+(b(t,x)+\epsilon^b(t,x))v,\quad x\in S^1,
\end{equation}
where $\epsilon^b(t,x)$ is bounded and uniformly continuous on $\mathbb{R}\times S^1$, $\epsilon^a(t,x)\in C^1(\mathbb{R}\times S^1)$, $\epsilon^a_t(t,x)$ and $\epsilon^a_x(t,x)$ are bounded.

We define
$$
H(a, {\epsilon}^a, b,{\epsilon}^b)={\rm cl}\{(a\cdot \tau,{\epsilon}^a\cdot \tau,b\cdot\tau,{\epsilon}^b\cdot \tau)=(a(t+\tau,\cdot),\epsilon^a(t+\tau,\cdot),b(t+\tau,\cdot),\epsilon^b(t+\tau,\cdot)):\tau\in \RR\}.
 $$
 Let
 $$
 \Omega^{\epsilon}=H(a,{\epsilon}^a,b,{\epsilon}^b)
  $$
  and denotes the element in $\Omega^{\epsilon}$ by $\omega^{\epsilon}=(\omega_a,\omega_a^{\epsilon},\omega_b,\omega_b^{\epsilon})$. Then, equation \eqref{linear-equa-pertur} can also give rise to a family of equations,
\begin{equation}\label{linear-equa-perturinduced}
v_t= v_{xx}+a^{\epsilon}(\omega^{\epsilon}\cdot t,x)v_x+b^{\epsilon}(\omega^{\epsilon}\cdot t,x)v,\,\,t>0,\,x\in S^{1}=\mathbb{R}/2\pi \mathbb{Z},
\end{equation}
where $a^{\epsilon}(\omega^{\epsilon}\cdot t,\cdot)=\omega_a\cdot t +\omega_a^{\epsilon}\cdot t$, and $b^{\epsilon}(\omega^{\epsilon}\cdot t,\cdot)=\omega_b\cdot t+ \omega_b^{\epsilon}\cdot t$.

Let $\Psi^{\epsilon}(t,\omega^{\epsilon})$ be the evolution operator generated by \eqref{linear-equa-perturinduced}.
Then \eqref{linear-equa-perturinduced} also induces a linear skew-product semiflow $\Pi_{\epsilon}^{t}$ on $X\times \Omega^{\epsilon}$:
\begin{equation}\label{linear-equa-pertursemi}
\Pi_{\epsilon}^t(v,\omega)=(\Psi^{\epsilon}(t,\omega^{\epsilon})v,\omega^{\epsilon}\cdot t)\,\text{ for } t\ge 0.
\end{equation}

Observe that the compact flow $(\Omega^\epsilon, \cdot)$ is an extension of the compact flow $(\Omega,\cdot)$. To be more precise,
let $P^\epsilon: \Omega^\epsilon\to \Omega$ be defined as follows:
 \begin{equation}
 \label{projection-eq}
 P^\epsilon(\omega^\epsilon)=(\omega_a,\omega_b)\quad \forall \,\, \omega^\epsilon=(\omega_a,\omega_a^\epsilon,\omega_b,\omega_b^\epsilon).
 \end{equation}
 Then
 \begin{equation}
 \label{extension-flow-eq}
 P^\epsilon(\omega^\epsilon\cdot t)=P^\epsilon(\omega^\epsilon)\cdot t\quad \forall\,\, \omega^\epsilon\in\Omega^\epsilon,\,\, t\in\mathbb{R}.
 \end{equation}
 Hence  the compact flow $(\Omega^\epsilon, \cdot)$ is an extension of the compact flow $(\Omega,\cdot)$.
 Hereafter,  for any $\omega^{\epsilon}\in \Omega^{\epsilon}$, we always put $\omega=P^\epsilon(\omega^\epsilon)$.

The following two lemmas generalize the results in \cite{Chow-Leiva95} related to the persistence of exponential dichotomy of linear skew-product semiflow under small perturbations on a compact base flow to linear skew-product semiflow under small perturbations on two different compact base flows.

\begin{lem}\label{perturbation-bounded-operator}
There is $M_{\epsilon}>0$ (only depending on $\|a\|_\infty,\|\epsilon^a\|_\infty, \|b\|_\infty,\|\epsilon^b\|_\infty$) such that
$$
\sup_{0\le t\le 1}\|\Psi^{\epsilon}(t,\omega^{\epsilon})-\Psi(t,\omega)\|\le M_{\epsilon}\big(\|\epsilon^a\|_{\infty}+\|\epsilon^b\|_{\infty}\big),
$$
where $\omega=P^\epsilon(\omega^\epsilon)$.
\end{lem}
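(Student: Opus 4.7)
The plan is to mimic the variation-of-constants argument used in the proof of Lemma 2.2 and apply the singular Gronwall inequality to the difference $\Psi^{\epsilon}(t,\omega^{\epsilon})-\Psi(t,\omega)$. Writing $A(\omega\cdot s)v=a(\omega\cdot s,\cdot)v_x+b(\omega\cdot s,\cdot)v$ and analogously $A^{\epsilon}(\omega^{\epsilon}\cdot s)v=(a+\epsilon^a)(\omega^{\epsilon}\cdot s,\cdot)v_x+(b+\epsilon^b)(\omega^{\epsilon}\cdot s,\cdot)v$, both $\Psi(t,\omega)v_0$ and $\Psi^{\epsilon}(t,\omega^{\epsilon})v_0$ satisfy the integral identity displayed in the proof of Lemma 2.2 with their own coefficients. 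Subtracting the two identities cancels the $e^{A_0t}v_0$ term and, after inserting the algebraic splitting $A^{\epsilon}\Psi^{\epsilon}-A\Psi=(A^{\epsilon}-A)\Psi^{\epsilon}+A(\Psi^{\epsilon}-\Psi)$, produces
\begin{align*}
\Psi^{\epsilon}(t,\omega^{\epsilon})v_0-\Psi(t,\omega)v_0 &=\int_0^t e^{A_0(t-s)}\bigl[A^{\epsilon}(\omega^{\epsilon}\cdot s)-A(\omega\cdot s)\bigr]\Psi^{\epsilon}(s,\omega^{\epsilon})v_0\,ds\\
&\quad+\int_0^t e^{A_0(t-s)}A(\omega\cdot s)\bigl[\Psi^{\epsilon}(s,\omega^{\epsilon})-\Psi(s,\omega)\bigr]v_0\,ds.
\end{align*}

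For the first integral, observe that by the projection identity \eqref{extension-flow-eq} the unperturbed parts of the coefficients of $A^{\epsilon}(\omega^{\epsilon}\cdot s)$ and $A(\omega\cdot s)$ agree, so the coefficient difference reduces to the perturbation $(\omega_a^{\epsilon}\cdot s)w_x+(\omega_b^{\epsilon}\cdot s)w$, whose $L^2$-norm is bounded by $C(\|\epsilon^a\|_{\infty}+\|\epsilon^b\|_{\infty})\|w\|_X$ via the embedding $X\hookrightarrow C^1(S^1)$. Combining this with the standard estimate $\|A_0^{\alpha}e^{A_0(t-s)}\|\le C(t-s)^{-\alpha}$ and the uniform bound $\|\Psi^{\epsilon}(s,\omega^{\epsilon})\|\le M'$ obtained by applying Lemma 2.2 to the perturbed coefficients $a+\epsilon^a,\,b+\epsilon^b$, the first integral is controlled by $C_1(\|\epsilon^a\|_{\infty}+\|\epsilon^b\|_{\infty})\|v_0\|$ on $0\le t\le 1$. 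The second integral is estimated exactly as in the proof of Lemma 2.2, giving the bound $(\|a\|_{\infty}+\|b\|_{\infty})\int_0^t(t-s)^{-\alpha}\|\Psi^{\epsilon}(s,\omega^{\epsilon})-\Psi(s,\omega)\|\,\|v_0\|\,ds$.

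Setting $g(t):=\|\Psi^{\epsilon}(t,\omega^{\epsilon})-\Psi(t,\omega)\|$ and assembling the two estimates yields the singular integral inequality
$$g(t)\le C_1(\|\epsilon^a\|_{\infty}+\|\epsilon^b\|_{\infty})+C_2\int_0^t(t-s)^{-\alpha}g(s)\,ds,\qquad 0\le t\le 1,$$
to which the singular Gronwall inequality \cite[Lemma 7.1.1]{Hen} applies directly, producing $g(t)\le M_{\epsilon}(\|\epsilon^a\|_{\infty}+\|\epsilon^b\|_{\infty})$ uniformly on $[0,1]$ with $M_{\epsilon}$ depending only on $\alpha$ and the four sup norms, as claimed.

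The main obstacle is bookkeeping across two distinct base flows: $\Psi(\cdot,\omega)$ evolves over $\Omega$ while $\Psi^{\epsilon}(\cdot,\omega^{\epsilon})$ evolves over $\Omega^{\epsilon}$, so coefficients must be compared along the separate orbits $\omega\cdot s\in\Omega$ and $\omega^{\epsilon}\cdot s\in\Omega^{\epsilon}$. The covering relation \eqref{extension-flow-eq} renders this harmless: the unperturbed pieces of the coefficients evaluated at $\omega^{\epsilon}\cdot s$ project precisely onto those at $\omega\cdot s$, leaving only the translates $\omega_a^{\epsilon}\cdot s$ and $\omega_b^{\epsilon}\cdot s$ in the coefficient difference, whose sup norms are dominated by $\|\epsilon^a\|_{\infty}$ and $\|\epsilon^b\|_{\infty}$ uniformly on the full hull $\Omega^{\epsilon}$. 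Thus the two-base-flow setting contributes nothing beyond notational overhead to the classical single-base argument of \cite{Chow-Leiva95}.
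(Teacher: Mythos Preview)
Your argument is correct, but the paper takes a shorter path. Instead of using the heat semigroup $e^{A_0(t-s)}$ as propagator in the Duhamel formula, the paper uses the full unperturbed evolution $\Psi(t-s,\omega\cdot s)$, writing
\[
\Psi^{\epsilon}(t,\omega^{\epsilon})v_0=\Psi(t,\omega)v_0+\int_0^t \Psi(t-s,\omega\cdot s)\bigl[A(\omega^{\epsilon}\cdot s)-A(\omega\cdot s)\bigr]\Psi^{\epsilon}(s,\omega^{\epsilon})v_0\,ds.
\]
Because $\Psi$ already absorbs the unperturbed coefficient $A(\omega\cdot s)$, only the pure perturbation term appears under the integral, and a single application of the smoothing bound $\|\Psi(t-s,\omega\cdot s)w\|_X\le M(t-s)^{-\alpha}\|w\|_{L^2}$ together with the uniform bound $\|\Psi^{\epsilon}\|\le M'_{\epsilon}$ from Lemma~\ref{bounded-operator} yields the estimate directly, with no Gronwall step needed. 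Your version splits $A^{\epsilon}\Psi^{\epsilon}-A\Psi$ into two pieces and then closes with the singular Gronwall inequality; this is a bit longer but has the virtue of using only the semigroup $e^{A_0t}$ and the exact machinery already displayed in the proof of Lemma~\ref{bounded-operator}, rather than invoking the $L^2\to X$ smoothing estimate for the full evolution $\Psi$. Either route gives the same constant structure for $M_{\epsilon}$.
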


\begin{proof}
By Lemma \ref{bounded-operator}, there is $M'_{\epsilon}$ (only depending on $\|a\|_\infty,\|\epsilon^a\|_\infty, \|b\|_\infty,\|\epsilon^b\|_\infty$) such that $\|\Psi^{\epsilon}(t,\omega^{\epsilon})\|\leq M'_{\epsilon}$, for $0\leq t\leq 1$ and $\omega^{\epsilon}\in \Omega^{\epsilon}$.

Let $A(\omega^{\epsilon}\cdot t)v=a^{\epsilon}(\omega^{\epsilon}\cdot t,x)v_x+b^{\epsilon}(\omega^{\epsilon}\cdot t,x)v$. Then
$$
\Psi^{\epsilon}(t,\omega^{\epsilon})v_0=\Psi(t,\omega)v_0+\int_0^t \Psi(t-s,\omega\cdot s)\Big(A(\omega^{\epsilon}\cdot s)-A(\omega\cdot s)\Big)\Psi^{\epsilon}(s,\omega^{\epsilon})v_0 ds,
$$
where $A(\omega \cdot s)$ is as defined in Lemma \ref{bounded-operator}. Hence
\begin{align*}
\|\Psi^{\epsilon}(t,\omega^{\epsilon})v_0-\Psi(t,\omega)v_0\| &\le \int_0^t \|\Psi(t-s,\omega\cdot s)\Big(A(\omega^{\epsilon}\cdot s)-A(\omega\cdot s)\Big)\Psi^{\epsilon}(s,\omega^{\epsilon})v_0 \|ds\\
&\le M \int_0^t\frac{1}{(t-s)^\alpha} \| \Big(A(\omega^{\epsilon}\cdot s)-A(\omega\cdot s)\Big)\Psi^{\epsilon}(s,\omega^{\epsilon})v_0\|_{L^2}ds\\
&\le M \big(\|\epsilon^a\|_{\infty}+\|\epsilon^b\|_{\infty}\big)\int_0^t\frac{1}{(t-s)^\alpha}\|\Psi^{\epsilon}(s,\omega^{\epsilon})v_0\|ds\\
&\le  \frac{MM'_{\epsilon} \big(\|\epsilon^a\|_{\infty}+\|\epsilon^b\|_{\infty}\big)}{1-\alpha}\|v_0\|.
\end{align*}
Let $M_{\epsilon}=\frac{MM'_{\epsilon}}{1-\alpha}$, the lemma then follows.
\end{proof}

\begin{lem}\label{pertubation-invariant}
 Assume that $\Pi^t$ admits an exponential dichotomy over $\Omega$. {Then}, there exists $\delta>0$ such that if  $|\epsilon^a(t,x)|<\delta$, $|\epsilon^a_t(t,x)|<\delta$, $|\epsilon^a_x(t,x)|<\delta$ and $|\epsilon^b(t,x)|<\delta$ for any $(t,x)\in \mathbb{R}\times S^1$,
  then $\Pi^t_{\epsilon}$ admits an exponential dichotomy over $\Omega^{\epsilon}$.
\end{lem}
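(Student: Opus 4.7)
The natural approach is to reduce the statement to the classical same-base roughness theorem for exponential dichotomies of linear skew-product semiflows (as in \cite{Chow-Leiva95}) by first lifting the given dichotomy from the base $\Omega$ to the larger base $\Omega^\epsilon$ via the flow morphism $P^\epsilon$ of \eqref{projection-eq}. Concretely, I define a ``pulled-back'' linear skew-product semiflow on $X\times\Omega^\epsilon$ by
$$
\tilde\Psi(t,\omega^\epsilon):=\Psi(t,P^\epsilon(\omega^\epsilon)),\qquad \tilde\Pi^t(v,\omega^\epsilon):=(\tilde\Psi(t,\omega^\epsilon)v,\,\omega^\epsilon\cdot t).
$$
Using \eqref{extension-flow-eq} and the cocycle identity for $\Psi$ over $\Omega$, one checks directly that $\tilde\Psi(s+t,\omega^\epsilon)=\tilde\Psi(s,\omega^\epsilon\cdot t)\tilde\Psi(t,\omega^\epsilon)$, so $\tilde\Pi^t$ is a bona fide linear skew-product semiflow on $X\times\Omega^\epsilon$.

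Next I lift the dichotomy itself: set $\tilde P(\omega^\epsilon):=P(P^\epsilon(\omega^\epsilon))$. Continuity of $\tilde P$ in $\omega^\epsilon$ follows from continuity of $P^\epsilon$ together with continuity of $\omega\mapsto P(\omega)$. The invariance relation $\tilde\Psi(t,\omega^\epsilon)\tilde P(\omega^\epsilon)=\tilde P(\omega^\epsilon\cdot t)\tilde\Psi(t,\omega^\epsilon)$ and the exponential estimates \eqref{exponential-dicho-express}, with the same constants $K,\beta$, transfer verbatim from $(\Psi,P)$ to $(\tilde\Psi,\tilde P)$. Hence $\tilde\Pi^t$ admits an exponential dichotomy over $\Omega^\epsilon$.

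Now $\Pi^t_\epsilon$ and $\tilde\Pi^t$ are two linear skew-product semiflows over the \emph{same} compact base flow $(\Omega^\epsilon,\cdot)$. Lemma \ref{perturbation-bounded-operator} gives the uniform estimate
$$
\sup_{\omega^\epsilon\in\Omega^\epsilon,\,0\le t\le 1}\|\Psi^\epsilon(t,\omega^\epsilon)-\tilde\Psi(t,\omega^\epsilon)\|\le 2M_\epsilon\,\delta,
$$
since the smallness hypothesis on $\epsilon^a,\epsilon^b$ yields $\|\epsilon^a\|_\infty+\|\epsilon^b\|_\infty\le 2\delta$. The classical roughness theorem for exponential dichotomies on a common base (\cite{Chow-Leiva95}) then provides a threshold $\delta_0=\delta_0(K,\beta,\alpha,\|a\|_\infty,\|b\|_\infty)>0$ such that, whenever $2M_\epsilon\delta<\delta_0$, the perturbed flow $\Pi^t_\epsilon$ itself admits an exponential dichotomy over $\Omega^\epsilon$, with projections close to $\tilde P$ and rate constants close to $K,\beta$. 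Choosing $\delta$ so small that this smallness condition is met finishes the proof.

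The main obstacle I expect is a purely conceptual one: because the base flows $\Omega$ and $\Omega^\epsilon$ are genuinely different, one cannot invoke the existing roughness theorem \cite{Chow-Leiva95} directly. The lifting via $P^\epsilon$ is the device that reduces the two-base perturbation problem to a one-base perturbation problem, and the quantitative input from Lemma \ref{perturbation-bounded-operator} is tailored precisely so that the hypothesis of the one-base roughness theorem is met. No new functional-analytic estimates are required beyond those already established in Lemmas \ref{bounded-operator} and \ref{perturbation-bounded-operator}.
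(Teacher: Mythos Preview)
Your proposal is correct, and it takes a genuinely different (and arguably cleaner) route than the paper.

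The paper proceeds by discretization: it passes to the time-one maps, forming skew-product sequences $\hat\Pi$ and $\hat\Pi_\epsilon$ over $\Omega$ and $\Omega^\epsilon$ respectively. It then applies Henry's discrete roughness theorem \cite[Theorem 7.6.7]{Hen} \emph{pointwise} for each $\omega^\epsilon$, comparing the sequence $\{\Psi_n^\epsilon(\omega^\epsilon)\}$ with $\{\Psi_n(P^\epsilon(\omega^\epsilon))\}$; Lemma \ref{perturbation-bounded-operator} supplies the needed closeness. This yields a discrete dichotomy for each $\omega^\epsilon$ with uniform constants, hence a uniform discrete dichotomy over $\Omega^\epsilon$; finally, \cite[Theorem F]{Sacker1991} upgrades weak hyperbolicity to an exponential dichotomy with continuous projections.

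Your approach replaces the discretization-and-reassembly step by a single conceptual observation: the pullback $\tilde\Psi(t,\omega^\epsilon)=\Psi(t,P^\epsilon(\omega^\epsilon))$ is already a linear skew-product semiflow over $\Omega^\epsilon$ with an exponential dichotomy (inherited verbatim, including continuity of the projections), so the problem reduces to the standard same-base roughness theorem of \cite{Chow-Leiva95}. This makes transparent that the ``two different base flows'' situation is really a same-base perturbation problem in disguise once one lifts via the flow morphism $P^\epsilon$. The paper's detour through Henry and Sacker--Sell buys an argument that is agnostic to the precise form in which the roughness theorem in \cite{Chow-Leiva95} is stated (generator versus evolution-operator perturbations), whereas your route is shorter but presupposes that the roughness result there is formulated for perturbations measured at the level of the evolution operators on $[0,1]$---which it is, so no real gap arises.
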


\begin{proof}
Since $\Pi^t$ admits an exponential dichotomy over $\Omega$, there exist $\beta, K>0$ such that
\begin{equation}\label{exponential-dicho-express2}
\begin{split}
\|\Psi(t,\omega)(I-P(\omega))\|\leq Ke^{-\beta t},\quad t\geq 0,\\
\|\Psi(t,\omega)P(\omega)\|\leq Ke^{\beta t},\quad t\leq 0,
\end{split}
\end{equation}
where $P(\cdot)$ is the associated projection.

Then the mapping $\hat\Pi:X\times \Omega\times \mathbb{Z}\to X\times \Omega$ given by
\begin{equation}\label{skew-product-sequen}
\hat{\Pi}(v, \omega, n) :=\left(\Psi_n(\omega)v, \omega \cdot n\right)
\end{equation}
where
$$
\Psi_{\mathrm{n}}(\omega)=\Psi(1,\omega \cdot n)
$$
is a skew-product sequence, see \cite[Definition 3.1 and Proposition 4.1]{Chow-Leiva95}. Therefore, the skew-product sequence $\hat\Pi$ given by \eqref{skew-product-sequen} admits a uniform discrete dichotomy over $\Omega$  (see \cite[Definition 3.3]{Chow-Leiva95} for the definition
of uniform discrete dichotomy)
as follows,
\begin{equation*}
\begin{split}
\|\Psi_{n,m}(\omega)(I-P(\omega\cdot n))\|\leq K\eta^{m-n},\quad n\leq m\in \mathbb{Z},\\
\|\Psi_{n,m}(\omega)P(\omega\cdot n)\|\leq K\eta^{-(m-n)},\quad n>m \in \mathbb{Z},
\end{split}
\end{equation*}
where $\Psi_{n,m}(\omega)=\Psi_{m-1}\Psi_{m-2}\cdots \Psi_{n}(\omega)$ if $m>n$, and $\Psi_{n,m}(\omega)=\Psi_{m}^{-1}\Psi^{-1}_{m+1}\cdots \Psi^{-1}_{n-1}(\omega)$ if $n>m$, with $\eta=e^{-\beta}$.

Moreover, by \cite[Theorem 7.6.7]{Hen}, for any given $K_1>K$ and $\eta<\eta_1<1$, there exists $\delta_0>0$ (depending only on $K$, $K_1$, $\eta$ and $\eta_1$) such that for any $\omega\in\Omega$ and any sequence $\{\Phi_n\}_{-\infty}^{\infty}\subset \mathcal{L}(X)$ with $\sup_{n\in \mathbb{Z}}\|\Psi_n(\omega)-\Phi_n\|\leq\delta_0$ has discrete dichotomy with $M_1$ and $\eta_1$.

Similarly, the mapping $\hat\Pi_{\epsilon}:X\times \Omega^{\epsilon}\times \mathbb{Z}\to X\times \Omega^{\epsilon}$ given by
\begin{equation}\label{skew-product-sequen-per}
\hat\Pi_{\epsilon}(v, \omega, n) :=\left(\Psi^{\epsilon}_n(\omega^{\epsilon})v, \omega^{\epsilon} \cdot n\right)
\end{equation}
is also a skew-product sequence,
where $\Psi^{\epsilon}_{\mathrm{n}}(\omega^{\epsilon})=\Psi(1,\omega^{\epsilon} \cdot n)$.

It  follows from Lemma \ref{perturbation-bounded-operator} that $\|\Psi^{\epsilon}_{\mathrm{n}}(\omega^{\epsilon})-\Psi_{\mathrm{n}}(\omega)\|\leq 2M_{\epsilon}\delta$ for any $n\in\mathbb{Z}$. Now choose $\delta>0$ sufficiently small such that $2M_{\epsilon}\delta<\delta_0$. Then, for each $\omega^{\epsilon}\in \Omega^{\epsilon}$, the skew-product sequence $\hat\Pi_{\epsilon}$ given by \eqref{skew-product-sequen-per} admits a
 discrete dichotomy with $M_1$ and $\eta_1$. This implies that $\hat\Pi_{\epsilon}$ admits a uniform discrete dichotomy over $\Omega^{\epsilon}$, here the dimension of the associated projected spaces are independent of $\omega^{\epsilon}$. Therefore, $\Pi_{\epsilon}$ is weakly hyperbolic on $\Omega^{\epsilon}$ (See \cite[p.27]{Sacker1991} for the definition of weakly hyperbolic). By \cite[Theorem F]{Sacker1991}, $\Pi_{\epsilon}$ admits an exponential dichotomy on $\Omega^{\epsilon}$ and hence $P(\omega^{\epsilon})$ continuously depend on $\omega^{\epsilon}$.
The proof of this lemma is completed.
\end{proof}

\subsection{Sacker-Sell spectrum and associated invariant spaces}
Let $\lambda\in \mathbb{R}$ and define $L^t_{\lambda}: X\times\Omega\rightarrow X\times\Omega$ by
\begin{equation}\label{exponential-dicho}
  L^t_{\lambda}(v,\omega)=(\Psi_{\lambda}(t,\omega)v,\omega\cdot t),
\end{equation}
where $\Psi_{\lambda}(t,\omega)=e^{-\lambda t}\Psi(t,\omega)$. We call
$$
\sigma(\Omega)=\{\lambda\in \mathbb{R}\, |\, L^t_{\lambda}\ \text{has no exponential dichotomy over }\Omega\}
$$
the {\it Sacker-Sell spectrum} of \eqref{linear-equa-induced}. Recall that $\Omega$ is compact and connected, the Sacker-Sell spectrum $\sigma(\Omega)=\bigcup_{k=0}^\infty I_k$, where $I_k=[a_k,b_k]$ and $\{I_k\}$ is ordered from right to left, that is, $\cdots<a_k\leq b_k<a_{k-1}\leq b_{k-1}<\cdots<a_0\leq b_0$ (see \cite{Chow1994, ChLuMa, Sacker1978,Sacker1991}).

For any given $0\leq n_1\leq n_2\leq\infty$, if $n_2\neq \infty$, let
\begin{equation}\label{twoside-estimate}
\begin{split}
E^{n_1,n_2}(\omega)=\{v\in X\, |\, &\|\Psi(t,\omega)v\|=o(e^{a^-t})\ \text{as}\ t\rightarrow -\infty,\\ & \|\Psi(t,\omega)v\|=o(e^{b^+t})\ \text{as}\ t\rightarrow \infty\}
\end{split}
\end{equation}
where $a^-$, $b^+$ are such that $\lambda_1<a^-<a_{n_2}\leq b_{n_1}<b^+<\lambda_2$ for any $\lambda_1\in \cup_{k=n_2+1}^{\infty}I_k$ and $\lambda_2\in\cup_{k=0}^{n_1-1}I_k$. If $n_2=\infty$, let
\begin{equation}\label{infty-estimate}
E^{n_1,\infty}(\omega)=\{v\in X :\|\Psi(t,\omega)v\|=o(e^{b^+t})\text{ as }t\to\infty\}
\end{equation}
where $b^+$ is such that $b_{n_1}<b^+<\lambda$ for any $\lambda\in \cup_{k=0}^{n_1-1}I_k$.

$E^{n_1,n_2}(\omega)$ is invariant in the sense: for $t\geq 0$, $\Psi(t,\omega)E^{n_1,n_2}(\omega)=E^{n_1,n_2}(\omega\cdot t)$ when $n_2<\infty$, while $E^{n_1,\infty}(\omega)$ satisfies $\Psi(t,\omega)E^{n_1,\infty}(\omega)\subset E^{n_1,\infty}(\omega\cdot t)$
(see \cite[Remark 2.3(ii)]{SWZ}).

{Assume that $0\not\in \sigma(\Omega)$ and $n_0$ is such that $I_{n_0}\subset (0,\infty)$ and $I_{n_0+1}\subset (-\infty,0)$.
$E^s(\omega)=E^{n_0+1,\infty}(\omega)$ and $E^u(\omega)=E^{0,n_0}(\omega)$ denote the {\it stable} and {\it unstable subspaces} of \eqref{linear-equa-induced} at $\omega\in \Omega$, respectively. In this case, $\Omega$ is called hyperbolic.}

 Assume $0\in \sigma(\Omega)$ and $n_0$ be such that $0\in I_{n_0}\subset\sigma(\Omega)$. Then $E^s(\omega)=E^{n_0+1,\infty}(\omega)$, $E^{cs}(\omega)=E^{n_0,\infty}(\omega)$, $E^{c}(\omega)=E^{n_0,n_0}(\omega)$, $E^{cu}(\omega)=E^{0,n_0}(\omega)$, and $E^u(\omega)=E^{0,n_0-1}(\omega)$ denote the {\it stable, center stable, center, center unstable}, and {\it unstable subspaces} of \eqref{linear-equa-induced} at $\omega\in \Omega$, respectively.

We have the following zero number properties on these invariant spaces from \cite[Lemma 2.7]{SWZ}.
\begin{lem}\label{zero-inva}
For given $0\leq n_1\leq n_2\leq\infty$ (when $n_2=\infty$, $n_1<\infty$ is needed), we have
\begin{equation*}
N_1\leq Z(v(\cdot))\leq N_2,\,\,\text{ for any }v\in E^{n_1,n_2}(\omega)\setminus\{0\},
\end{equation*}
where
\begin{equation*}
N_1=\left\{
\begin{split}
 &{\rm dim}E^{0,n_1-1}(\omega),\,\quad\,\,\,\text{ if }{\rm dim}E^{0,n_1-1}(\omega)\text{ is even;}\\
 &{\rm dim}E^{0,n_1-1}(\omega)+1,\,\text{ if }{\rm dim}E^{0,n_1-1}(\omega)\text{ is odd,}
\end{split}\right.
\end{equation*} and
\begin{equation*}
N_2=\left\{
\begin{split}
 &{\rm dim}E^{0,n_2}(\omega),\,\quad\,\,\,\text{ if }{\rm dim}E^{0,n_2}(\omega)\text{ is even;}\\
 &{\rm dim}E^{0,n_2}(\omega)-1,\,\text{ if }{\rm dim}E^{0,n_2}(\omega)\text{ is odd.}
\end{split}\right.
\end{equation*}
Here, we define $E^{0,-1}(\omega)=\{0\}.$
\end{lem}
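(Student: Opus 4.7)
The plan is to combine the monotonicity and eventual-parity of the zero number $Z$ from Lemma \ref{zero-number} with the two-sided flow invariance and finite dimension of the Sacker--Sell sub-bundle $E^{n_1,n_2}(\omega)$, plus the classical rigidity of $Z$ on such invariant bundles.

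First, fix $v\in E^{n_1,n_2}(\omega)\setminus\{0\}$ with $n_2<\infty$. The two-sided exponential bounds in \eqref{twoside-estimate} imply $\Psi(t,\omega)v$ exists and is nonzero for every $t\in\RR$; moreover $v$ itself is the image of some $w\in X$ under $\Psi(T,\omega\cdot(-T))$ for every $T>0$, so by Lemma \ref{zero-number}(a) $Z(v)<\infty$. Applying Lemma \ref{zero-number} both forwards and backwards, the non-increasing, integer-valued map $t\mapsto Z(\Psi(t,\omega)v)$ stabilises to
\[
Z_+(v):=\lim_{t\to\infty}Z(\Psi(t,\omega)v)\le Z(v)\le \lim_{t\to-\infty}Z(\Psi(t,\omega)v)=:Z_-(v),
\]
and for $|t|$ sufficiently large every zero is simple. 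Because simple zeros on $S^1$ alternate in sign and hence come in pairs, both $Z_\pm(v)$ are \emph{even}. This parity observation is precisely what forces the ``round to even'' corrections in the definitions of $N_1$ and $N_2$.

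For the upper bound $Z(v)\le N_2$ it suffices to show $Z_-(v)\le N_2$. Let $d_2:=\dim E^{0,n_2}(\omega)$. Along a subsequence $t_n\to-\infty$, the renormalised vectors $\Psi(t_n,\omega)v/\|\Psi(t_n,\omega)v\|$ cluster (by compactness in the finite-dimensional bundle $E^{0,n_2}(\omega\cdot t_n)$, after translating the base point) at some nonzero $v^\star\in E^{0,n_2}(\omega^\star)$ for $\omega^\star$ in the $\alpha$-limit of $\omega$. A dimension-counting argument applied to the $d_2$-dimensional invariant bundle $E^{0,n_2}(\cdot)$ then yields $Z(v^\star)\le d_2$: otherwise a small perturbation of $v^\star$ inside this bundle would produce a $(d_2{+}1)$-dimensional family of initial data with at least $d_2+2$ zeros apiece, and backward propagation combined with the zero-count monotonicity of Lemma \ref{zero-number}(a) would force $\dim E^{0,n_2}\ge d_2+1$. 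The parity drop then brings $d_2$ down to $N_2$. The lower bound $Z(v)\ge N_1$ is handled dually by studying $Z_+(v)$: since $v\in E^{n_1,n_2}(\omega)$ has no component in $E^{0,n_1-1}$, the forward-time normalised orbit of $\Psi(t,\omega)v$ stays uniformly away from $E^{0,n_1-1}$, and the same dimension-counting principle applied to a bundle complementary to $E^{0,n_1-1}$ forces $Z_+(v)\ge N_1$.

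The main obstacle is the rigorous bundle-by-bundle zero-number estimate in the non-autonomous setting. In the constant-coefficient / autonomous case this is classical Sturm--Liouville / Fourier theory, but for a general cocycle over the compact base $(\Omega,\cdot)$ one needs a perturbation-and-limit argument. The cleanest path exploits the roughness of exponential dichotomies just established in Lemmas \ref{perturbation-bounded-operator}--\ref{pertubation-invariant}: approximate the cocycle by one with (long-)periodic coefficients, for which Floquet theory identifies the zero number of each bundle exactly, then pass to the limit using continuity of the projection family $P(\omega)$ in $\omega$. The parity corrections in $N_1,N_2$ ultimately reflect the exceptional situations where $\dim E^{0,k}(\omega)$ is odd -- typically when a one-dimensional ``center'' direction coming from the $S^1$-translation symmetry of \eqref{general-equa} sits inside the spectral splitting, forcing the zero-number bound to drop or rise by one to preserve evenness.
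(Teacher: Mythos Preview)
The paper does not prove this lemma at all: it is quoted verbatim from \cite[Lemma~2.7]{SWZ} with no argument given. So there is no ``paper's own proof'' to compare to; your proposal is an attempt to reconstruct the proof from the cited reference.

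Your overall architecture is sound and matches the standard route in \cite{SWZ,ChLuMa}: use Lemma~\ref{zero-number} to stabilise $Z(\Psi(t,\omega)v)$ as $t\to\pm\infty$, invoke the evenness of the simple-zero count on $S^1$, and then pin down the stabilised values via the dimension of the relevant Sacker--Sell bundle. The parity corrections in $N_1,N_2$ are exactly as you describe.

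There is, however, a genuine gap in your ``dimension-counting'' step. You write that if $Z(v^\star)>d_2$ then ``a small perturbation of $v^\star$ inside this bundle would produce a $(d_2{+}1)$-dimensional family of initial data with at least $d_2+2$ zeros apiece, and backward propagation \ldots\ would force $\dim E^{0,n_2}\ge d_2+1$.'' This is circular: the bundle $E^{0,n_2}$ has dimension exactly $d_2$, so no perturbation inside it can produce a $(d_2{+}1)$-dimensional family, and having many zeros does not by itself enlarge an invariant subspace. The actual mechanism, as carried out in \cite{SWZ} (building on \cite{ChLuMa}), is different: one first shows that on each \emph{indecomposable} spectral bundle $E^{k,k}(\omega)$ the zero number is a constant even integer $Z_k$, then proves the strict monotonicity $Z_0<Z_1<\cdots$ by comparing elements of adjacent bundles (if $Z_k=Z_{k+1}$ one manufactures a nontrivial linear combination in $E^{k,k+1}$ with a multiple zero that persists for all time, contradicting Lemma~\ref{zero-number}(c)). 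The bounds $N_1,N_2$ then follow by summing $\dim E^{j,j}\in\{1,2\}$ over $j$. Your Floquet-approximation idea in the last paragraph is a legitimate alternative path to this same monotonicity, but it needs to replace, not supplement, the broken perturbation argument. Finally, you restrict to $n_2<\infty$ at the outset and never return to the case $n_2=\infty$, where only the lower bound $N_1$ is claimed; this case requires a separate (easier) argument using only $Z_+(v)$.
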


\subsection{Invariant manifolds for nonlinear parabolic equations on $S^1$}
Given any $u\in X$ and $a\in S^1$, we define the shift $\sigma_a$ on $u$ as $(\sigma_a u)(\cdot)=u(\cdot+a)$. Then the {\it $S^1$-group orbit} of $u$ is defined as the following:
 \begin{equation}\label{E:group-orbit-11}
 \Sigma u=\{\sigma_a u\,|\, a\in S^1\}.
 \end{equation}
Let $u(t,\cdot;u_0)$ be a classical solution of \eqref{general-equa}+\eqref{autonomous} (resp. \eqref{general-equa}+\eqref{periodic-eq}) with $u(0,\cdot;u_0)=u_0\in X$, then $\sigma_au(t,\cdot;u_0)$ is a classical solution of \eqref{general-equa}+\eqref{autonomous} (resp. \eqref{general-equa}+\eqref{periodic-eq}). Moreover, the uniqueness of solution ensures the {\it translation invariance}, that is, $\sigma_au(t,\cdot;u_0)=u(t,\cdot;\sigma_au_0)$.

 Assume moreover that $u(t,x;u_0)$ is a bounded solution of \eqref{general-equa}+\eqref{autonomous} (resp. \eqref{general-equa}+\eqref{periodic-eq}) in $X$ and $\omega(u_0)$  (resp. $\tilde \omega(u_0)$) is the $\omega$-limit set of $u_0$ (resp. with respect to the time $T$-map or Poincar\'{e} map $P$). Then, by \cite[Theorem A]{Matano}(resp. by \cite[Theorem 1]{SF1})
$$
\omega(u_0)\subset \Sigma \phi\ (\text{resp. }\tilde \omega(u_0)\subset \Sigma \phi),
$$
where $\phi$ is a fixed point of { \eqref{general-equa}+\eqref{autonomous}} or $u(t,x;\phi)=\phi(x-ct)$ for some $c\not =0$, which is referred to as a rotating wave (resp. $\phi\in X$ and there is $r\in S^1$ such that $P \phi=\sigma_r \phi$).

\begin{lem}
{ Let  $\tilde u\in \omega(u_0)$ (resp. $\tilde u\in \tilde \omega(u_0)$). Assume that $\mathcal{O}=\mathrm{cl}\{u(t,\cdot;\tilde u), t\in\mathbb{R}\}$ is hyperbolic.}  Then
$\phi$ (resp. $\phi$ with respect to $P$) is a spatially-homogeneous fixed point.
\end{lem}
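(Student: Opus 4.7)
The strategy is to argue by contradiction: suppose $\phi$ (resp.\ $\phi$ with respect to $P$) is not spatially homogeneous, so $\phi_x\not\equiv 0$, and exhibit a non-trivial bounded entire solution of the linearization along the orbit of $\tilde u$; this will force $0$ into the Sacker--Sell spectrum of the associated skew-product semiflow, contradicting hyperbolicity of $\mathcal{O}$. Since $\tilde u\in \omega(u_0)\subset\Sigma\phi$ (resp.\ $\tilde\omega(u_0)\subset\Sigma\phi$), one has $\tilde u=\sigma_{a_0}\phi$ for some $a_0\in S^1$, and because $\tilde u$ sits in the compact global attractor, its full two-sided trajectory $\{u(t,\cdot;\tilde u)\}_{t\in\RR}$ exists and is relatively compact in $X$.

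The bounded entire solution I have in mind is $w(t,\cdot):=u_x(t,\cdot;\tilde u)$. Since $f$ in \eqref{general-equa} is independent of $x$, differentiating the PDE in $x$ shows $w$ satisfies the variational equation
\begin{equation*}
w_t=w_{xx}+f_{u_x}(t,u(t,\cdot;\tilde u),u_x(t,\cdot;\tilde u))\,w_x+f_u(t,u(t,\cdot;\tilde u),u_x(t,\cdot;\tilde u))\,w,
\end{equation*}
with the $t$-argument suppressed in the autonomous case. Standard parabolic smoothing gives uniform $C^\infty$-bounds on $u(t,\cdot;\tilde u)$ along the attractor, hence $w(t,\cdot)\in X$ with $\sup_{t\in\RR}\|w(t,\cdot)\|<\infty$, and non-triviality is immediate from $w(0,\cdot)=\sigma_{a_0}\phi_x\not\equiv 0$.

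To close the contradiction, let $\Omega$ be the hull of the coefficient pair $\bigl(f_{u_x}(t,u(t,\cdot;\tilde u),u_x(t,\cdot;\tilde u)),\,f_u(t,u(t,\cdot;\tilde u),u_x(t,\cdot;\tilde u))\bigr)$ and $\Pi^t$ the associated skew-product semiflow from Section~2.2. If $\mathcal{O}$ were hyperbolic, $\Pi^t$ would admit an exponential dichotomy over $\Omega$; decomposing $w(0,\cdot)=v^s+v^u\in R(I-P(\omega_0))\oplus R(P(\omega_0))$, the forward boundedness of $w$ combined with the lower bound $\|\Psi(t,\omega_0)v^u\|\ge K^{-1}e^{\beta t}\|v^u\|$ for $t\ge 0$ (dual to $\|\Psi(t,\omega)P(\omega)\|\le Ke^{\beta t}$ for $t\le 0$) forces $v^u=0$, and symmetrically the backward boundedness forces $v^s=0$, giving $w\equiv 0$, a contradiction. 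Consequently $\phi_x\equiv 0$, i.e.\ $\phi$ is constant in $x$; being constant, $\Sigma\phi=\{\phi\}$ is automatically a fixed point of the semiflow in the autonomous case, and in the periodic case $P\phi=\sigma_r\phi=\phi$ makes $\phi$ a fixed point of $P$. The main obstacle I anticipate is a subtlety in the periodic case: $\Sigma\phi$ is only $P$-invariant and need not be invariant under the continuous semiflow, so $\mathcal{O}$ may fail to lie inside $\Sigma\phi$; fortunately the argument never uses $\mathcal{O}\subset\Sigma\phi$, only that $w=u_x$ is a non-trivial bounded entire solution of the linearization, so this issue does not actually intervene.
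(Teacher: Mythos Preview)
Your proof is correct and follows essentially the same strategy as the paper: exhibit $u_x(t,\cdot;\tilde u)$ as a non-trivial entire solution of the linearized equation, then use exponential dichotomy to force it to vanish. The only minor difference is that the paper exploits $\mathcal{O}\subset\Sigma\phi$ (valid in the autonomous case) to get the exact identity $\|v(t,\cdot)\|=\|\phi_x\|$, whereas you use mere boundedness of $u_x$ along the attractor; your version is slightly more robust and, as you note, sidesteps the periodic-case subtlety cleanly.
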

\begin{proof}
We only prove this lemma in the autonomous case, while the deduction for periodic case is analogous.

Since $\mathcal {O}\subset \omega(u_0)\subset \Sigma \phi$, it suffices to prove that $\mathcal {O}$ is spatially-homogeneous (i.e. all the elements in $\mathcal {O}$ are spatially-homogeneous). The hyperbolicity of $\mathcal {O}$ implies the following equation:
\begin{equation}\label{linearized-equation}
  v_t=v_{xx}+\p_2 f(u(t,\cdot;\tilde u),u_x(t,\cdot;\tilde u)) v_x+\p_1 f(u(t,\cdot;\tilde u),u_x(t,\cdot;\tilde u)) v
\end{equation}
admits an exponential dichotomy.

Suppose on the contrary that $\tilde u_0\in \mathcal{O}$ is spatially-inhomogeneous. Let $v(t,x)=u_x(t,x;\tilde u_0)$, then it is not hard to see that $v(t,x)$ is a solution of \eqref{linearized-equation}. Since $\mathcal {O}\subset \Sigma \phi$, $v(t,x)\in \Sigma \phi_x$, and then $\|v(t,x)\|=\|\phi_x\|\neq 0$.

Since $\mathcal{O}$ is hyperbolic, $v(t,x)=v^u(t,x)+v^s(t,x)$ with $v^u(t,x)\in E^u(\p_2 f(u(t,\cdot;\tilde u_0),u_x(t,\cdot;\tilde u_0)),\\ \p_1 f(u(t,\cdot;\tilde u_0),u_x(t,\cdot;\tilde u_0))\setminus\{0\}$ and $v^s(t,x)\in E^s(\p_2 f(u(t,\cdot;\tilde u_0),\p_1 f(u(t,\cdot;\tilde u_0))\setminus\{0\}$, for otherwise it will contradict to that $\|v(t,\cdot)\|=\|\phi_x\|$ for all $t\in\mathbb{R}$. The fact that $\|v^s(t,\cdot)\|$ is decreasing exponentially and $\|v^u(t,\cdot)\|$ is increasing exponentially as $t\to \infty$ imply that $\|v(t,\cdot)\|$ is unbounded, a contradiction. Thus, $\mathcal{O}$ is spatially-homogeneous. We have completed the proof of this lemma.
\end{proof}

Therefore, we naturally have the following definitions:
\begin{defn}
{\rm
Assume that $\phi$ is a hyperbolic equilibrium of { \eqref{general-equa}+\eqref{autonomous}}, $U\subset X$ is an open neighborhood of $\phi$. Then the local stable manifold $W_{loc}^s(\phi)$ and local unstable manifold $W_{loc}^u(\phi)$ of $\phi$ are defined as follows:
\begin{equation*}
\begin{split}
W_{loc}^{s}(\phi)=&\left\{\hat u \in U: u(t,\cdot;\hat u) \in U \text { for all } t \geq 0 \text { and } \|u(t,\cdot;\hat u)-\phi\|\rightarrow 0 \text { exponentially as } t \rightarrow \infty\right\}\\
W_{loc}^{u}(\phi)=&\left\{\hat u \in U: \text { some backward branch } u(t,\cdot;\hat u) \text { exists for all } t<0 \text { and lies in } U,\right.\\ & \text { further, } \|u(t,\cdot;\hat u)-\phi\| \rightarrow 0 \text { exponentially as } t \rightarrow-\infty \}.
\end{split}
\end{equation*}

}
\end{defn}

\begin{defn}
{\rm
Let $\phi(t)$ be a hyperbolic $T$-periodic orbit of { \eqref{general-equa}+\eqref{periodic-eq}} and for any $\tau\in[0,T)$ assume $U_{\tau}\subset X$ be an open neighborhood of $\phi(\tau)$, then the local stable manifold $W_{loc}^s(\phi(\tau))$ and local unstable manifold $W_{loc}^u(\phi(\tau))$ of $\phi(\tau)$ are defined as follows:
\begin{equation*}
\begin{split}
W_{loc}^{s}(\phi(\tau))=&\left\{\hat u \in U_{\tau}: u(nT+\tau,\cdot;\hat u) \in U_{\tau} \text { for all } n \in \mathbb{N} \text { and } \|u(nT+\tau,\cdot;u_0)-\phi(\tau)\|\rightarrow 0 \right. \\ & \text { exponentially as } n \rightarrow \infty\}.\\
W_{loc}^{u}(\phi(\tau))=&\left\{\hat u \in U_{\tau}: \text { some backward branch } u(t,\cdot;\hat u) \text { exists for all } t<0 \text { and } u(-nT+\tau,\cdot;\hat u)\right. \\ &  \text{lies in } U_{\tau}, \text { further, } \|u(-nT+\tau,\cdot;\hat u)-\phi(\tau)\| \rightarrow 0 \text { exponentially as } n \rightarrow \infty \}.
\end{split}
\end{equation*}

}
\end{defn}

As the end of this section, we recall some properties concerning with local unstable manifolds of hyperbolic fixed points (resp. hyperbolic fixed points of the associated Poincar\'{e} map $P$) of \eqref{general-equa}+\eqref{autonomous} (resp. \eqref{general-equa}+\eqref{periodic-eq}) which will be used in later sections, as the following

\begin{lem}\label{local-um}
  Suppose that $\phi$ is a hyperbolic fixed point (resp. hyperbolic fixed point of the associated Poincar\'{e} map $P$) of \eqref{general-equa}+\eqref{autonomous} (resp. \eqref{general-equa}+\eqref{periodic-eq}). Then there is an open neighborhood $U\subset X$ of $\phi$ such that for any $\hat u\in X$ satisfies $u(t,\cdot;\hat u)\in \overline U$ (resp. $u(-nT,\cdot;\hat u)\in \overline U$) for $t\leq 0$ (resp. $n\in \mathbb{N}$), one has $\hat u\in W^u_{loc}(\phi)$.
\end{lem}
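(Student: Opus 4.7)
The plan is to reduce the statement to the classical local unstable manifold theorem at a hyperbolic fixed point. In the autonomous case, set $w = u - \phi$; then \eqref{general-equa}+\eqref{autonomous} becomes $w_t = w_{xx} + L w + g(w, w_x)$, where $L v := \partial_1 f(\phi, \phi_x) v + \partial_2 f(\phi, \phi_x) v_x$ and $g$ is the nonlinear remainder vanishing to first order at the origin. Hyperbolicity of $\phi$ yields an exponential dichotomy for $L$ with splitting $X = E^u(\phi) \oplus E^s(\phi)$ and spectral gap $\beta > 0$. Invoking the local unstable manifold theorem for sectorial operators (see Henry \cite{Hen}) gives a neighborhood $U_0$ of $\phi$ on which
\[
W^u_{loc}(\phi) = \{\phi + v + h(v) : v \in E^u(\phi),\ \|v\| < r\}
\]
is a $C^1$ graph with $h(0) = 0$, $Dh(0) = 0$, and every point of this graph carries a backward orbit in $U_0$ converging to $\phi$ at rate $e^{\beta t}$.

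The heart of the proof is to shrink $U \subset U_0$ so that the stated conclusion holds. Suppose for contradiction that $\hat u \in X$ admits a backward orbit in $\bar U$ but $\hat u \notin W^u_{loc}(\phi)$. Setting $w(t) = u(t, \cdot; \hat u) - \phi$ and
\[
\xi(t) := P^s(\phi) w(t) - h\bigl(P^u(\phi) w(t)\bigr),
\]
one has $\xi(0) \neq 0$. A variation-of-constants analysis using the dichotomy bounds \eqref{exponential-dicho-express} on $L$ together with the (arbitrarily small, by shrinking $U$) Lipschitz constant of $g$ on $\bar U$ yields a backward-expansion estimate of the form $\|\xi(t)\| \geq C e^{-\beta t}\|\xi(0)\|$ for all $t \leq 0$; this forces $u(t, \cdot; \hat u)$ out of $\bar U$ in finite backward time, a contradiction. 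Combined with the graph characterization of $W^u_{loc}(\phi)$, this also gives the exponential decay $\|u(t, \cdot; \hat u) - \phi\| \to 0$ as $t \to -\infty$ for free.

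The main obstacle is that the linearization along the backward orbit $u(t, \cdot; \hat u)$ is nonautonomous and a priori distinct from $L$, so the dichotomy bounds stated for $L$ do not directly apply to the variational equation driving $\xi$. This is precisely where Lemma \ref{pertubation-invariant} is brought in: once $U$ is small enough, the coefficients $\partial_1 f, \partial_2 f$ evaluated along any orbit in $\bar U$ are a uniformly small perturbation of those at $\phi$, so the exponential dichotomy persists on the enlarged hull with essentially the same rates and continuously varying projections, and the variation-of-constants argument proceeds with constants independent of the particular backward orbit. The periodic case is handled by the same scheme applied to the time-$T$ map $P$, viewing $\{u(-nT, \cdot; \hat u)\}_{n \geq 0}$ as a discrete backward orbit of the hyperbolic fixed point $\phi$ of $P$ and invoking the discrete-time analogue of the invariant manifold theorem.
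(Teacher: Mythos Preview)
The paper does not prove this lemma; it simply cites \cite[Sec.~6.3]{Ruelle89}, where the characterization of $W^u_{loc}(\phi)$ as the set of points possessing a backward orbit that remains in a fixed small neighborhood is part of the statement of the classical local unstable manifold theorem for hyperbolic fixed points. Your sketch is essentially a rederivation of that theorem, so in substance you and the paper agree.

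Two comments on the write-up. First, the detour through Lemma~\ref{pertubation-invariant} is unnecessary. Once you have written the equation as $w_t = Aw + g(w,w_x)$ with $A$ the \emph{fixed} sectorial linearization at $\phi$ and $g$ a nonlinear remainder with small Lipschitz constant on $\bar U$, the variation-of-constants argument runs directly with the autonomous semigroup $e^{At}$ and its dichotomy; there is no need to reinterpret the equation as a nonautonomous linear one and then appeal to roughness. This is how the result is obtained in \cite[Theorems~5.2.1 and~6.1.9]{Hen} and in \cite{Ruelle89}. Second, the backward-growth estimate you state for $\xi(t)=P^s w(t)-h(P^u w(t))$ is morally right but needs care, since the stable semigroup is only forward-defined and the equation for $\xi$ couples to $P^u w$ through $h$. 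The cleanest route is the Lyapunov--Perron argument: boundedness of $w(t)$ on $(-\infty,0]$ forces
\[
P^s w(t)=\int_{-\infty}^{t} e^{A^s(t-\tau)}P^s g(w(\tau),w_x(\tau))\,d\tau,
\]
and the contraction-mapping uniqueness underlying the construction of $W^u_{loc}(\phi)$ then identifies $w(0)$ with a point of the graph. This yields the conclusion (and the exponential backward convergence) without the intermediate estimate on $\xi$.
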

\begin{proof}
  See \cite[Sec.6.3]{Ruelle89}.
\end{proof}

\section{The autonomous case}

{ In this section, the solution operator of \eqref{general-equa}+\eqref{autonomous} is defined as follows:
$$
S_tu_0=u(t,\cdot;u_0),\quad t\geq 0,\, u_0\in X.
$$
Assume that for each $u_0\in X$, $u(t,\cdot;u_0)$ exists for all $t\ge 0$.
Then, $S_t$ defined as above induces a semiflow on $X$.
A compact invariant set $A$ of the semiflow $S_t$ is said to be a {\it maximal compact invariant set} if every compact invariant set of $S_t$
 is a subset of $A$. An invariant set $\mathcal{A}$ is said to be a {\it global attractor} {(see, e.g. \cite[p.39]{Hale88})} if $\mathcal{A}$ is a maximal compact invariant set which attracts each bounded set $B\subset X$, that is
$$
\lim_{t\to\infty}\sup_{u'\in B}\inf_{u\in\mathcal{A}}\|S_tu'-u\|=0.
$$
}

Hereafter, {we always assume that  for each $u_0\in X$, $u(t,\cdot;u_0)$ exists for all $t\ge 0$ and that \eqref{general-equa}+\eqref{autonomous} admits a compact global attractor $\mathcal{A}$.}

\begin{defn}\label{non-wan-auto}
{\rm A point $u_0\in X$ is said to be a non-wandering point of \eqref{general-equa}+\eqref{autonomous} if for any neighborhood
 $U$ of $u_0$, {and any $T_0>0$, there exists $t>T_0$ such that $\{u(t,\cdot;u')\,|\, u'\in U\}\cap U\not =\emptyset$ (see, e.g. \cite[p.106]{wiggins}).} }
  \end{defn}

 Observe that if $u_0\in X$ is a non-wandering point, then for any $t_n\to\infty$, there are $t_n^{'}>t_n$ and $u_n\in X$ such that
   $u_n\to u_0$ and $u(t_n^{'},\cdot;u_n)\to u_0$ as $n\to\infty$.
   It then follows that $u(t,x;u_0)$ exists for all $t\in \RR$ and $\{u(t,\cdot;u_0)\,|\, t\in\RR\}\subset \mathcal{A}$. In fact, suppose that $t_n^{'}\to\infty$ and $u_n\in X$ are such that
   $u_n\to u_0$ and $u(t_n^{'},\cdot;u_n)\to u_0$ as $n\to\infty$. Noticing that $\{u(\cdot+t_n^{'},\cdot;u_n)\}$ is precompact in {$C(I\times X)$} for any bounded closed interval $I\subset \R$, without loss of generality, {for any given $t\in\R$}, we may assume that
   $u(t+t_n^{'},\cdot;u_n)\to \tilde u(t)$ as $n\to\infty$. This together with $u(t_n^{'},\cdot;u_n)\to u_0$ implies that
   $u(t,\cdot;u_0)$ exists and $u(t,\cdot;u_0)=\tilde u(t)=\lim_{n\to\infty}u(t+t_n^{'},\cdot;u_n)$ for {any given $t\in\R$}.

   \smallskip

In the rest of this section, we always assume that $u_0$ is a non-wandering point of \eqref{general-equa}+\eqref{autonomous}. Let $\omega(u_0)$ be the $\omega$-limit set of $u_0$. By \cite[Theorem A]{Matano},
\begin{equation}
\label{omega-limit-set-eq}
\omega(u_0)\subset \{\sigma_a \phi\,|\, a\in S^1\}\quad \text{for some }\phi\in X.
\end{equation}
Moreover, $\phi$ is a fixed point of \eqref{general-equa} or $u(t,x;\phi)=\phi(x-ct)$ for some $c\not =0$. Hereafter, $\phi$ is assumed to be  as in \eqref{omega-limit-set-eq}.

 We say that $u_0$ {\it generates a rotating wave} if $u_0(\cdot)\not\equiv {\rm const}$ and
$u(t,x;u_0)=u_0(x-ct)$ for some $c\not =0$.
A point $u\in X$ is called {\it spatially-homogeneous} if $u(\cdot)$ is independent of the spatial variable $x$. Otherwise, $u$ is called {\it spatially-inhomogeneous}.
\vskip 2mm

Our main result in this section is the following
\begin{thm}
\label{aut-thm} Suppose  that $u_0$ is a non-wandering point of \eqref{general-equa}+\eqref{autonomous}. Then
 $u_0$ is a fixed point or generates a rotating wave. Particularly, if $f(u,u_x)=f(u,-u_x)$, then $u_0$ is a fixed point.
  \end{thm}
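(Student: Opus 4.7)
The strategy is to show that $u_0\in\Sigma\phi$, where $\phi$ is as in \eqref{omega-limit-set-eq}. Once this is established, the first assertion of the theorem is automatic: writing $u_0=\sigma_b\phi$, the translation invariance $u(t,\cdot;\sigma_b u_0)=\sigma_b u(t,\cdot;u_0)$ shows that $u_0$ is a fixed point when $\phi$ is, and generates a rotating wave of the same speed $c\neq 0$ when $\phi$ does. The last clause under $f(u,-u_x)=f(u,u_x)$ then follows because \cite{Matano} guarantees that bounded solutions converge to equilibria in the reflection-symmetric setting, so the rotating-wave alternative is vacuous and $u_0$ must be a fixed point.

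Following Definition \ref{non-wan-auto} and the remark after it, I would first fix sequences $u_n\to u_0$ in $X$ and $t_n\to\infty$ with $u(t_n,\cdot;u_n)\to u_0$; compactness of $\Sigma\phi$ allows me to assume (along a subsequence) that $u(t_n,\cdot;u_0)\to\sigma_{a_*}\phi$ for some $a_*\in S^1$. Suppose for contradiction that $u_0\notin\Sigma\phi$, in particular $u_0\neq\sigma_{a_*}\phi$. The difference $w_n(t,x)=u(t,x;u_n)-u(t,x;u_0)$ solves a linear parabolic equation of the form \eqref{linear-equa}, whose coefficients $A_n,B_n$ are the standard integral means of $\p_2 f$ and $\p_1 f$ between the two solutions and converge, on every compact time interval, to the coefficients $A_\infty,B_\infty$ of the linearization of $f$ along $u(\cdot,\cdot;u_0)$. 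At the endpoints, $w_n(0,\cdot)\to 0$ in $X$ while $w_n(t_n,\cdot)\to u_0-\sigma_{a_*}\phi\neq 0$.

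The heart of the argument is to normalize $\tilde w_n=w_n/\|w_n(0,\cdot)\|_X$ and pass to a limit. Using the a priori bound of Lemma \ref{bounded-operator} together with the perturbation estimates of Lemmas \ref{perturbation-bounded-operator} and \ref{pertubation-invariant}---the roughness of exponential dichotomy across different base flows, tailored to our $A_n\to A_\infty$, $B_n\to B_\infty$---I would extract along a diagonal subsequence a nontrivial bounded entire solution $v_\infty$ of the linearization of \eqref{general-equa}+\eqref{autonomous} along the orbit $u(\cdot,\cdot;u_0)$. The unbounded growth $\|\tilde w_n(t_n,\cdot)\|\to\infty$ then forces $v_\infty$ to project nontrivially onto an unstable or center-unstable subspace $E^{0,n_0}(\omega)$ of the hull flow at $u_0$, whence Lemma \ref{zero-inva} pins its zero number down to a definite value in terms of $\dim E^{0,n_0}$.

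The contradiction is then extracted by matching this forced zero number against the zero number carried by the nonzero limit $u_0-\sigma_{a_*}\phi$: the monotonicity of Lemma \ref{zero-number}(a) gives $Z(u_0-\sigma_{a_*}\phi)\leq\liminf_n Z(u_n-u_0)$, which is uniformly bounded by the $C^1$ regularity of solutions on $\mathcal{A}$; on the other hand the containment $\omega(u_0)\subset\Sigma\phi$ pins the central part of the Sacker--Sell spectrum along the orbit of $u_0$ and hence constrains $\dim E^{0,n_0}$. Balancing these three zero-number counts---from the diagonal limit, from the invariant subspace, and from the attracting set $\Sigma\phi$---is what I expect to be the main technical obstacle, and its resolution via the invariant-subspace framework of Section 2 is what closes the proof.
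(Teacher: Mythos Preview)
Your outline has a genuine gap at its core. The normalization $\tilde w_n=w_n/\|w_n(0,\cdot)\|_X$ cannot produce a bounded limit: since $w_n(0,\cdot)\to 0$ while $w_n(t_n,\cdot)\to u_0-\sigma_{a_*}\phi\ne 0$, you have $\|\tilde w_n(t_n,\cdot)\|\to\infty$, so the family $\{\tilde w_n\}$ is unbounded on $[0,t_n]$ and no diagonal extraction gives a bounded entire solution. Even if you renormalize at a different time, the Sacker--Sell decomposition you invoke is for the linearization along $u(\cdot,\cdot;u_0)$, an orbit which by the contradiction hypothesis is \emph{not} a fixed point or rotating wave; its hull is not $\Sigma\phi$, and Lemmas~\ref{perturbation-bounded-operator}--\ref{pertubation-invariant} only transfer dichotomies from the linearization at $\phi$ to coefficients that stay uniformly close to those of $\phi$ for all $t\in\RR$, which the coefficients along the full two-sided orbit of $u_0$ need not do. Your final paragraph concedes that the contradiction is not actually in hand, and the ``balancing'' of three zero-number counts has no evident mechanism: $Z(u_0-\sigma_{a_*}\phi)$ is a single integer with no a priori relation to $\dim E^{0,n_0}$ for a linearization whose spectrum you cannot compute.

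The paper proceeds by a completely different device that you are missing. Instead of linearizing, it compares the solution with its own time-translate: under the standing contradiction hypothesis, Lemmas~\ref{fixed-constant}--\ref{auton-T-const} show that there exist $T,T^*>0$ and $N^*\in\NN$ with
\[
Z\big(\sigma_a u(t+T,\cdot;u_0)-u(t,\cdot;u_0)\big)=N^*\qquad\text{for all }t\ge T^*\text{ and all }a\in S^1.
\]
Because this holds for \emph{every} $a$, one gets $\max_x u(t+T,x;u_0)\ne\max_x u(t,x;u_0)$, hence (say) a strictly increasing sequence $k\mapsto\max u(kT,\cdot;u_0)$. The non-wandering returns $u(t_n',\cdot;u_n)\to u_0$ then transplant this monotonicity to nearby orbits, and a rational/irrational dichotomy on the residue $\tau_0=\lim(t_n'\bmod T)$ produces a second, \emph{decreasing} chain of maxima along multiples of $\tau_0$, contradicting the first. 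The $S^1$-uniform constancy of the zero number---which is where the roughness lemmas are actually used, to handle the spatially-homogeneous $\phi$ case---is the engine of the argument, and your proposal does not touch it.
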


Before proving the above theorem,  we first prove the following lemmas under the assumption that $u_0$ is not a fixed point and does not generate a rotating wave.

Note that if  $\phi$ is spatially-homogeneous, then $\phi$ is a fixed point of  \eqref{general-equa}+\eqref{autonomous}. In such case, the following is the linearization of  \eqref{general-equa} at $\phi$,
\begin{equation}\label{fixed-point-linearized}
v_t=v_{xx}+a v_x+b v,\quad x\in S^1,
\end{equation}
where $a=\p_2 f(\phi,0)$ and $b=\p_1 f(\phi,0)$. We denote $E^u=E^u(a,b)$, $E^c=E^c(a,b)$, and $E^{cu}=E^{cu}(a,b)$ as the unstable, center, and center unstable spaces of \eqref{fixed-point-linearized}, respectively.

\begin{lem}\label{fixed-constant}
Assume that $u_0$ is neither a fixed point nor generates a rotating wave.
\begin{itemize}
\item[{\rm (i)}]
If $\phi$ is spatially-inhomogeneous, then there is $N\in\NN$ such that
$$
Z(\phi-\sigma_a \phi)=N,\quad \forall a\in S^1\setminus\{0\}.
$$
\item[{\rm (ii)}]If $\phi$ is spatially-homogeneous, then $\phi$ is neither stable (which implies $\dim E^{cu}>0$) nor hyperbolic.  And either $\dim E^u=0$ with $\dim E^c=1$ or $\dim E^u$ is odd with $\dim E^c=2$.
Moreover, there is $N_0\in \NN$ such that
$$
Z(v(\cdot))=N_0, \quad \forall v\in E^c\setminus\{0\}.
$$
\end{itemize}
\end{lem}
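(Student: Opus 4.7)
Plan. Since $u_0$ is non-wandering, as noted just after Definition~\ref{non-wan-auto} the full orbit $\{u(t,\cdot;u_0):t\in\mathbb{R}\}$ exists and lies in the attractor $\mathcal{A}$; Matano's theorem (together with its backward-in-time analogue on $\mathcal{A}$) places $\omega(u_0)$ and $\alpha(u_0)$ inside $S^1$-group orbits $\Sigma\phi$ and $\Sigma\phi'$. I write $a_{\infty}:=\partial_{2}f(\phi,0)$, $b_{\infty}:=\partial_{1}f(\phi,0)$ in the spatially-homogeneous case.

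For part~(i), I would fix $a\in S^1$ with $\sigma_{a}\phi\neq\phi$ and set $w_{a}(t,x):=u(t,x;\phi)-u(t,x;\sigma_{a}\phi)$; the mean-value theorem in $f$ shows $w_a$ solves a linear parabolic equation of the form treated in Lemma~\ref{zero-number}, and since $u(t,x;\phi)=\phi(x-ct)$ for some $c\in\mathbb{R}$ we get $w_{a}(t,\cdot)=\sigma_{-ct}(\phi-\sigma_{a}\phi)$, so $Z(w_{a}(t,\cdot))$ is constant in $t$. Lemma~\ref{zero-number}(b) then forces $\phi-\sigma_{a}\phi$ to have only simple zeros, which are $C^{1}$-robust; hence $a\mapsto Z(\phi-\sigma_{a}\phi)$ is locally constant on the open set $\{a:\sigma_{a}\phi\neq\phi\}$. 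The isotropy group $A:=\{a:\sigma_{a}\phi=\phi\}$ is a finite subgroup of $S^{1}$ because $\phi$ is nonconstant, and the identity $\phi-\sigma_{a+a_{0}}\phi=\sigma_{a_{0}}(\phi-\sigma_{a}\phi)$ for $a_{0}\in A$ shows the local value agrees across the connected components of $S^{1}\setminus A$, giving a single $N\in\mathbb{N}$.

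For part~(ii), the setup is $\phi$ constant, $\omega(u_{0})=\{\phi\}$, $u_{0}\neq\phi$, and the constant-coefficient linearization $v_{t}=v_{xx}+a_{\infty}v_{x}+b_{\infty}v$ has eigenvalues $\lambda_{k}=-k^{2}+ia_{\infty}k+b_{\infty}$, $k\in\mathbb{Z}$. Step~1 (not stable): if $\dim E^{cu}=0$ then $\phi$ is locally exponentially asymptotically stable with an open basin, and combining continuity of the semiflow on a long fixed time-window $[0,T]$ with the exponential estimate in that basin forces $u(t_{n},\cdot;u_{n})\to\phi$ along any non-wandering sequence $u_{n}\to u_{0}$, $t_{n}\to\infty$, contradicting $u(t_{n},\cdot;u_{n})\to u_{0}\neq\phi$; hence $\dim E^{cu}\geq 1$. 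Step~2 (not hyperbolic): if also $\dim E^{c}=0$, the variational equation along $u(\cdot,\cdot;u_{0})$ has coefficients tending to $(a_{\infty},b_{\infty})$, so Lemma~\ref{pertubation-invariant} would upgrade the exponential dichotomy of the constant-coefficient limit to one on a tail of time. With Lemma~\ref{local-um} and the analogous backward analysis, $u_{0}$ would lie on $W^{s}_{\mathrm{loc}}(\phi)\cap W^{u}_{\mathrm{loc}}(\phi')$; the zero-number bounds of Lemma~\ref{zero-inva} on these invariant spaces, together with the recurrence furnished by the non-wandering sequences, give a squeeze on $Z(u(t_{n},\cdot;u_{n})-u(t_{n},\cdot;u_{0}))$ which is incompatible as $t_n\to\infty$, a contradiction. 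Hence $\dim E^{c}\geq 1$.

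Step~3 (spectrum) and main obstacle. Given $\dim E^{c}\geq 1$, the condition $\operatorname{Re}\lambda_{k}=0$ forces $b_{\infty}=k_{0}^{2}$ for some integer $k_{0}\geq 0$; direct inspection of the spectrum yields $(\dim E^{u},\dim E^{c})=(0,1)$ when $k_{0}=0$ and $(2k_{0}-1,2)$ when $k_{0}\geq 1$, and the real eigenfunctions $\{1\}$ or $\{\cos k_{0}x,\sin k_{0}x\}$ give $Z(v)=2k_{0}=:N_{0}$ for every $0\neq v\in E^{c}$. The main obstacle will be Step~2: ruling out a hyperbolic spatially-homogeneous $\phi$ requires combining the Sacker--Sell perturbation machinery of Section~2.2 with the zero-number invariance of Section~2.1 to exclude non-wandering trajectories lying on $W^{s}(\phi)$ of a hyperbolic equilibrium, effectively a no-homoclinic-type statement localized at $u_{0}$ that must be obtained without a priori hyperbolicity of the full $\omega/\alpha$-limit structure.
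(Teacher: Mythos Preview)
Your handling of part~(i) and of Steps~1 and~3 in part~(ii) is correct and matches the paper's argument (you are in fact slightly more careful in (i) about a possible nontrivial isotropy subgroup $\{a:\sigma_a\phi=\phi\}$, which the paper elides by treating $S^1\setminus\{0\}$ as connected).

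The genuine gap is Step~2. Your plan routes through $\alpha(u_0)$ and a point $\phi'$ there, but nothing guarantees $\phi'$ is hyperbolic (or even spatially homogeneous), so Lemma~\ref{local-um} is not available backward and you cannot place $u_0$ on any $W^u_{\mathrm{loc}}(\phi')$. The proposed squeeze on $Z(u(t_n,\cdot;u_n)-u(t_n,\cdot;u_0))$ also does not close: both orbits converge to $\phi$, their difference tends to zero, and Lemma~\ref{zero-inva} gives no a~priori bound on this particular zero number. The paper's key idea, which your outline is missing, avoids $\alpha(u_0)$ entirely. From the non-wandering data $u_n\to u_0$, $u(t_n',\cdot;u_n)\to u_0$ one first selects intermediate times $t_n$ with $u(t_n,\cdot;u_n)\to\phi$, and then the \emph{first exit time} $\tau_n\in(t_n,t_n')$ from a fixed small $\delta$-ball around $\phi$; any accumulation point $\tilde u$ of $u(\tau_n,\cdot;u_n)$ then has its entire backward orbit in the closed $\delta$-ball, so by Lemma~\ref{local-um} one obtains $\tilde u\in W^u_{\mathrm{loc}}(\phi)$ for the \emph{same} $\phi$. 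Chaining the monotonicity of $t\mapsto Z(u(t,\cdot;u_n)-\phi)$ through $t_0<\tau_n+t_0<t_n'+t_0$ and passing to the limit on both ends forces $Z(u(t_0,\cdot;u_0)-\phi)=Z(u(t_0,\cdot;\tilde u)-\phi)$; on the other hand the strict zero-number separation between the stable and unstable levels (Lemma~\ref{zero-inva}, cf.\ \cite[Corollary~3.5]{SWZ}) gives $Z(u(t,\cdot;u_0)-\phi)>Z(u(t_0,\cdot;\tilde u)-\phi)$ for $t\gg1$, which is the desired contradiction. The construction of $\tilde u\in W^u_{\mathrm{loc}}(\phi)$ out of the recurrence of the $u_n$-orbits, rather than out of $\alpha(u_0)$, is the missing ingredient.
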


\begin{proof}

(i) Note that either both $\phi$ and $\sigma_a\phi$ are fixed points of \eqref{general-equa}+\eqref{autonomous} or $u(t,x;\phi)$ and $u(t,x;\sigma_a\phi)$ are periodic solutions
of \eqref{general-equa}+\eqref{autonomous}. It then follows from Lemma \ref{zero-number} that
$Z(u(t,\cdot;\phi)-u(t,\cdot;\sigma_a \phi))$ is independent of $t$, and hence $u(t,\cdot;\phi)- u(t,\cdot;\sigma_a \phi)$ has only simple zeros and $Z(u(t,\cdot;\phi)-u(t,\cdot;\sigma_a \phi))$ is continuous in $a\in S^1\setminus\{0\}$. Therefore, there is $N\in\NN$ such that (i) holds.

\smallskip

(ii) {Suppose on the contrary that $\phi$ is stable. Then, for the given $0<\epsilon<\|u_0-\phi\|$, there is $\eta>0$ such that $\|u(t,\cdot;\tilde \phi)-\phi\|<\dfrac{\epsilon}{2}$ for $t>0$ and $\tilde \phi\in X$ with $\|\tilde\phi-\phi\|<\eta$. Noticing that $u(t,\cdot;u_0)\to \phi$ as $t\to\infty$, there exists $T_0>0$ satisfying $\|u(T_0,\cdot;u_0)-\phi\|<\eta/2$. Hence, $\|u(t,\cdot;u_0)-\phi\|<\dfrac{\epsilon}{2}$ for all $t>T_0$. Since $u_0$ is a non-wandering point, there are $u_n \in X$ and $t^{'}_n\to \infty$ with $u_n \to u_0$ and $u(t^{'}_n,\cdot;u_n )\to u_0$, as $n\to \infty$. In particular, one can chose $N_1>0$ be such that $\|u(T_0,\cdot;u_n )-u(T_0,\cdot;u_0)\|<\dfrac{\eta}{2}$ and $t^{'}_n>T_0$ for $n>N_1$. Therefore,  $\|u(T_0,\cdot;u_n )-\phi\|\leq \|u(T_0,\cdot;u_n )-u(T_0,\cdot;u_0)\|+\|u(T_0,\cdot;u_0)-\phi\|<\eta$; moreover, $\|u(t^{'}_n,\cdot;u_n )-\phi\|<\dfrac{\epsilon}{2}$ for $n>N_1$. As a consequence, $\|u(t^{'}_n,\cdot;u_n )-u_0\|\geq \|\phi-u_0\|-\|u(t^{'}_n,\cdot;u_n )-\phi\|\geq \|\phi-u_0\|-\dfrac{\epsilon}{2}\geq \dfrac{\epsilon}{2}$ for all $n>N_1$, a contradiction to $u(t^{'}_n,\cdot;u_n )\to u_0$. That is, $\phi$ is not stable and $\dim E^{cu}>0$.

We now turn to prove that $\phi$ cannot be hyperbolic. For simplicity, let $t_n^{'}\nearrow \infty$ be such that $u(t_n^{'},\cdot;u_n )\to u_0$ as $n\to\infty$ and choose $t_n<t_n^{'}$ satisfies $t_n\to\infty$, as $n\to \infty$. Then, $u(t_n,\cdot;u_0)\to \phi$ as $n\to\infty$. Since $u_n \to u_0$ as $n\to \infty$, for any fixed $k\in\mathbb{N}$, there is $u_{n_k}\in\{u_{n}\}$ such that $\|u(t_k,\cdot;u_{n_k})-u(t_k,\cdot;u_0)\|<\dfrac{1}{k}$. Therefore, $\|u(t_k,\cdot;u_{n_k})-\phi\|\to 0$ as $k\to \infty$. Without loss of generality, we may assume that  $u(t_n,\cdot;u_n )\to \phi$ as $n\to\infty$.} Note that  $t_n^{'}-t_n\to\infty$. (For otherwise, there exist subsequences $t_{n_k}^{'}$ and $t_{n_k}$ such that $t_{n_k}^{'}-t_{n_k}<c$ for some $c>0$. Then by choosing subsequences if necessary, $t_{n_k}^{'}-t_{n_k}\to c_1\in [0,c]$, it then leads to $u(t_{n_k}^{'},\cdot;u_{n_k} )\to \phi$, a contradiction).

Let $0<\delta\ll \|u_0-\phi\|$. For brevity, we assume that
$$
\|u_n -u_0\|<\delta, \quad \|u(t_n^{'},\cdot;u_n )-u_0\|<\delta,\quad \|u(t_n,\cdot;u_n )-\phi\|<\delta,\quad \forall\, n\ge 1.
$$
Then for any $n\geq 1$, there exists $\tau_n\in (t_n,t_n^{'})$ such that
$$
\|u(t,\cdot;u_n )-\phi\|<\delta\quad \forall\, t\in (t_n,\tau_n),\,\,
$$
and
$$
\|u(\tau_n,\cdot;u_n )-\phi\|=\delta.
$$
Similarly, $\tau_n-t_n\to \infty$ as $n\to\infty$.

Without loss of generality, assume that $u(\tau_n,\cdot;u_n )\to \tilde u$ ({since that \eqref{general-equa}+\eqref{autonomous} admits a global attractor, $\{u(\tau_n,\cdot;u_n )\}$ is precompact in $X$}). For any fixed $T_0>0$ and $n$ sufficiently large, one has $\tau_n-T_0>0$. By choosing subsequence still denotes it by $\tau_n$, one has $u(\tau_n-T_0,\cdot;u_n )\to u(-T_0,\cdot;\tilde u)$. Moreover, by the continuous dependence of solution of \eqref{general-equa}+\eqref{autonomous} on the initial value, $u(\tau_n+t,\cdot;u_n )\to u(t,\cdot;\tilde u)$ uniformly for $t\in [-T_0,T_0]$. Since $T_0>0$ is arbitrary, $u(t,\cdot;\tilde u)$ exists
for all $t<0$ and
$$\|u(t,\cdot;\tilde u)-\phi\|\le \delta\quad  \forall\,\, t\le 0.
$$

If $\phi$ is hyperbolic, then $u(t,\cdot;u_0)\in W^s_{loc}(\phi)$ for $t\gg 1$. By Lemma \ref{local-um}, one can choose $\delta,\epsilon>0$ sufficiently small such that $u(t,\cdot;\tilde u)\in W^u_{loc}(\phi)$ for $t\leq \epsilon$.
Assume that $0<t_0<\epsilon$ be such that
$u(t_0,\cdot;u_0)-\phi$ and $u(t_0,\cdot;\tilde u)-\phi$ have only simple zeros. Then
\begin{equation*}
\begin{split}
Z(u(t_0,\cdot;u_0)-\phi)&=Z(u(t_0,\cdot;u_n) -\phi)\ge Z(u(\tau_n+t_0,\cdot;u_n )-\phi)\\
&=Z(u(t_0,\cdot;\tilde u)-\phi)\ge Z(u(t_n^{'}+t_0,\cdot;u_n )-\phi)=Z(u(t_0,\cdot;u_0)-\phi)
\end{split}
\end{equation*}
for $n\gg 1$. Therefore, $Z(u(t_0,\cdot;u_0)-\phi)=Z(u(t_0,\cdot;\tilde u)-\phi)$. By Lemma \ref{zero-number} and zero number control on local invariant manifolds (see also \cite[Corrolary 3.5]{SWZ}), one has
$$
{ Z(u(t_0,\cdot;u_0)-\phi)\geq Z(u(t,\cdot;u_0)-\phi)>Z(u(t_0,\cdot;\tilde u)-\phi),\quad t\gg 1}
$$
which is a contradiction. Hence $\phi$ is not hyperbolic.

Let $a=\p_2 f(\phi,0)$, $b=\p_1 f(\phi,0) $ $w(t,x)=v(t,x-at)e^{-bt}$, then \eqref{fixed-point-linearized} can be transformed into $w_t=w_{xx}.$ Note that the eigenfunctions of $w_t=w_{xx}$ associated with the eigenvalue $\lambda_k=-k^2$, $k=0,1,\cdots$ are $w_k(t,x)=e^{-k^2t}\sin kx,e^{-k^2t}\cos kx$. Then it yields that the spectrum set $\sigma(a,b)=\{b,-1+b,\cdots,-k^2+b,\cdots\}$ and $E^k=\mathrm{span}\{\sin kx,\cos kx\}$ are the corresponding eigenspaces of \eqref{fixed-point-linearized}. Since $\dim E^{cu}>0$, if $\dim E^{u}=0$, then $b=0$ and $E^c=\mathrm{span}\{c\}$ with $c\neq 0$; if $\dim E^u>0$, then there exist some $k_0\in \mathbb{N}$ with $-k_0^2+b=0$ and $E^c=\mathrm{span}\{\sin k_0x,\cos k_0x\}$. Then, the constancy of zero number function $Z(\cdot)$ on $E^c$ can be obtained by using \cite[Theorem 2]{Angenent1988}.

The proof of lemma is completed.
\end{proof}

\begin{lem}\label{auton-inhomo-cons}
Assume that $u_0$ is neither a fixed point nor generates a rotating wave.
 \begin{itemize}
 \item[{\rm(i)}] If $\phi$ is spatially-inhomogeneous, then $Z(\sigma_a u(t,\cdot;u_0)-u(t,\cdot;\phi))=N$
for all { $t>0$}   and $a\in S^1$, where $N$ is as in Lemma \ref{fixed-constant} (i).

\item[{\rm(ii)}]
 If $\phi$ is spatially-homogeneous, then $Z(\sigma_a u(t,\cdot;u_0)-\phi)=N_0$
for all $a\in S^1$ and { $t>0$}, where  $N_0$ is as in Lemma \ref{fixed-constant} (ii).
\end{itemize}
\end{lem}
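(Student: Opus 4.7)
I put $v_a(t,\cdot):=\sigma_a u(t,\cdot;u_0)-u(t,\cdot;\phi)$, which in case (ii) simplifies to $\sigma_a u(t,\cdot;u_0)-\phi$ because $\phi$ is a constant fixed point. A mean-value expansion of $f$ shows $v_a$ satisfies a linear parabolic equation of the form \eqref{linear-equa} with bounded continuous coefficients, so Lemma \ref{zero-number} applies to it. Since $u_0$ is non-wandering it lies on $\mathcal{A}$ and has a full backward orbit, and because $u_0$ is neither a fixed point nor a rotating wave, backward uniqueness on $\mathcal{A}$ forces $v_a(t,\cdot)\not\equiv 0$ for every $t\in\mathbb{R}$. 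Lemma \ref{zero-number}(c) then provides $T_a>0$ and an integer $M_a$ with $Z(v_a(t,\cdot))=M_a$ and $v_a(t,\cdot)$ having only simple zeros for all $t\ge T_a$.

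\textbf{Identification of $M_a$.} In case (i) I use $\omega(u_0)\subset\Sigma\phi$ together with $u(t,\cdot;\phi)\in\Sigma\phi$ for all $t$. Passing to a subsequence $t_n\to\infty$ along which $u(t_n,\cdot;u_0)\to\sigma_{a_\infty}\phi$ and $u(t_n,\cdot;\phi)\to\sigma_{b_\infty}\phi$, the limit $v_a(t_n,\cdot)\to\sigma_{a+a_\infty}\phi-\sigma_{b_\infty}\phi$ is a translate of $\sigma_b\phi-\phi$ with $b=a+a_\infty-b_\infty$. By Lemma \ref{fixed-constant}(i), this limit has zero number $N$ with only simple zeros whenever $b\neq 0$, so $C^1$-stability of the zero number at simple-zero functions gives $M_a=N$ for every $a$ except at most one exceptional value; that exceptional value is then absorbed by the joint $C^1$-continuity of $a\mapsto v_a(t_0,\cdot)$ at a generic $t_0\ge T_a$. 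In case (ii) the situation is different, since $\sigma_a\phi=\phi$ forces $v_a(t,\cdot)\to 0$ and the limit of $Z(v_a(t_n,\cdot))$ cannot be read off a non-trivial profile. Here I fall back on the invariant-manifold/Sacker-Sell machinery of Section 2: the decomposition at $\phi$ with $\dim E^c\in\{1,2\}$ coming from Lemma \ref{fixed-constant}(ii), together with the zero-number estimate on invariant spaces (Lemma \ref{zero-inva}), shows that $v_a(t,\cdot)$ asymptotically aligns with $E^c$, hence Lemma \ref{fixed-constant}(ii) yields $M_a=N_0$.

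\textbf{Propagating $M_a$ to every $t>0$ via non-wandering.} Lemma \ref{zero-number}(a) already gives $Z(v_a(t,\cdot))\ge M_a$ for every $t>0$. For the reverse inequality I activate the non-wandering structure: choose $u_n\to u_0$ and $t_n'\to\infty$ with $u(t_n',\cdot;u_n)\to u_0$, and set $w_n(t,\cdot):=\sigma_a u(t,\cdot;u_n)-u(t,\cdot;\phi)$; this also solves a linear parabolic equation of the form \eqref{linear-equa}, so $t\mapsto Z(w_n(t,\cdot))$ is non-increasing. Continuous dependence gives $w_n(s,\cdot)\to v_a(s,\cdot)$ in $C^1$ for each fixed $s>0$, and compactness of $\mathcal{A}$ together with backward uniqueness on $\mathcal{A}$ yields $u(t_n'-\tau,\cdot;u_n)\to u(-\tau,\cdot;u_0)$ for each fixed $\tau>0$, hence $w_n(t_n'-\tau,\cdot)\to v_a(-\tau,\cdot)$. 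Selecting $s\ge T_a$ and a generic $\tau>0$ at which both $v_a(s,\cdot)$ and $v_a(-\tau,\cdot)$ have only simple zeros and letting $n\to\infty$ in $Z(w_n(s,\cdot))\ge Z(w_n(t_n'-\tau,\cdot))$ produces $M_a\ge Z(v_a(-\tau,\cdot))$. Since $\tau>0$ is arbitrary and $t\mapsto Z(v_a(t,\cdot))$ is non-increasing, this forces $Z(v_a(t,\cdot))\le M_a$ for every $t\in\mathbb{R}$; combined with the lower bound this gives $Z(v_a(t,\cdot))\equiv M_a$, which equals $N$ in case (i) and $N_0$ in case (ii), for every $t>0$ and every $a\in S^1$.

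\textbf{Main obstacle.} The delicate step is the identification of $M_a$ in case (ii): because $v_a(t,\cdot)\to 0$ asymptotically, the value $M_a=N_0$ cannot be read off a non-trivial limiting profile, and one has to locate the asymptotic ``direction'' of $v_a(t,\cdot)$ inside the center subspace $E^c$ rather than strictly inside $E^s$. This is precisely where the Sacker-Sell spectral apparatus of Section 2.3, the zero-number estimate on invariant spaces (Lemma \ref{zero-inva}), and the non-wandering recurrence of $u_0$ interact in a non-trivial way, and it is the one place in the proof where a purely soft argument seems insufficient.
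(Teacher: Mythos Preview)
There are two genuine gaps.

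\textbf{Case (i): the propagation step breaks when $\phi$ is a rotating wave.} You claim $w_n(t_n'-\tau,\cdot)\to v_a(-\tau,\cdot)$. The first component $\sigma_a u(t_n'-\tau,\cdot;u_n)\to \sigma_a u(-\tau,\cdot;u_0)$ is fine, but the second component $u(t_n'-\tau,\cdot;\phi)$ equals $\phi(\cdot-c(t_n'-\tau))$ when $c\neq 0$, and this does \emph{not} converge to $u(-\tau,\cdot;\phi)=\phi(\cdot+c\tau)$: along a subsequence it converges to $\phi(\cdot-c\tau^*+c\tau)$ for some phase $\tau^*$ depending on $\{t_n'\}$. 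So your limit is not $v_a(-\tau,\cdot)$ but a version with a shifted $\phi$. This is exactly why the paper's argument for (i) is more intricate: it tracks the phase shift, derives the pair of inequalities $Z(\sigma_a u_0-\phi)\ge Z(\sigma_a u(t_0,\cdot;u_0)-u(t_0,\cdot;\phi))>Z(\sigma_a u_0-\phi(\cdot-c\tau))$ under a hypothetical drop, and then iterates in two subcases (according to whether $c\tau$ and the period are rationally dependent) to force a contradiction. Your soft argument bypasses this and therefore fails for rotating-wave $\phi$.

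\textbf{Case (ii): the identification $M_a=N_0$ is not established.} From $v_a(t,\cdot)\to 0$ you only get $v_a\in E^{cs}$, and Lemma~\ref{zero-inva} then yields $M_a\ge N_0$, nothing more. The phrase ``asymptotically aligns with $E^c$'' hides the real issue you yourself flag in the last paragraph: you must rule out that $v_a$ sits in $E^s$ (where $Z>N_0$ is allowed). The paper closes this by a second use of non-wandering that your propagation step does not supply: from the proof of Lemma~\ref{fixed-constant}(ii) one extracts an auxiliary point $\tilde u$ with $\|u(t,\cdot;\tilde u)-\phi\|\le\delta$ for all $t\le 0$; the roughness-of-dichotomy argument (Lemma~\ref{pertubation-invariant}) then places $u(t,\cdot;\tilde u)-\phi$ on the center-unstable side, giving $Z(u(t_0,\cdot;\tilde u)-\phi)\le N_0$, and the non-wandering chain forces $Z(u(t_0,\cdot;u_0)-\phi)=Z(u(t_0,\cdot;\tilde u)-\phi)$. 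Your argument only uses the forward asymptotics of $u_0$ itself and the backward limit $u(-\tau,\cdot;u_0)$, neither of which is known to approach $\phi$, so the upper bound $M_a\le N_0$ is missing.
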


\begin{proof}
(i)  First, let $T=\frac{2\pi}{|c|}$ in the case that $u(t,x;\phi)=\phi(x-ct)$ for some $c\not =0$ and $T$ be any fixed positive number
in the case that $u(t,x;\phi)\equiv \phi(x)$.   Then $u(kT,\cdot;\phi)=\phi$ for all $k\in\NN$.
Let $u_n \in X$ and $t^{'}_n\to \infty$ be such that $u_n \to u_0$ and $u(t^{'}_n,\cdot;u_n )\to u_0$ as $n\to \infty$.
 Then
 $$
 \sigma_a u(t_n^{'},\cdot;u_n )\to \sigma_a u_0,\quad \forall\,\, a\in S^1.
 $$
 There are $\mathbb{N}\ni k_n\to\infty$ and $\tau_n\in [0,T)$ such that
 $$
 t_n^{'}=k_nT+\tau_n.
 $$
 Without loss of generality, we may assume that $\tau_n\to \tau\in [0,T]$. Then
 $$
 u(t_n^{'},\cdot;\phi)\to \phi(\cdot-c \tau).
 $$

Next, we prove that $Z(\sigma_a u(t,\cdot;u_0)-u(t,\cdot;\phi))$ is independent of $t>0$.

{ Choose any $t_0>0$ such that $\sigma_a u(t_0,\cdot;u_0)-u(t_0,\cdot;\phi)$ has only simple zeros.}
Suppose that there is {$\tau_0>t_0$} such that
$$
Z(\sigma_a u(t_0,\cdot;u_0)-u(t_0,\cdot;\phi))>Z(\sigma_a u(\tau_0,\cdot;u_0)-u(\tau_0,\cdot;\phi))
$$
with $\sigma_a u(\tau_0,\cdot;u_0)-u(\tau_0,\cdot;\phi)$ has only simple zeros.
Then
$$
{ Z(\sigma_a u(t_0,\cdot;u_0)-u(t_0,\cdot;\phi))=Z(\sigma_a u(t_0,\cdot;u_n) -u(t_0,\cdot;\phi))}>Z(\sigma_a u(\tau_0,\cdot;u_n )-u(\tau_0,\cdot;\phi))
$$
for $n\gg 1$. This implies that for any fixed $t\in\RR$,
$$
{ Z(\sigma_a u(t_0,\cdot;u_0)-u(t_0,\cdot;\phi))=Z(\sigma_a u(t_0,\cdot;u_n) -u(t_0,\cdot;\phi))}>Z(\sigma_a u(t+t_n^{'},\cdot;u_n )-u(t+t_n^{'},\cdot;\phi))
$$
for $n\gg 1$. Note that
$$
\sigma_a u(t+t_n^{'},\cdot;u_n )-u(t+t_n^{'},\cdot;\phi)\to \sigma_a u(t,\cdot;u_0)-\phi(\cdot-c(t+\tau)).
$$
Choose $t<0$ be such that $u(t,\cdot;u_0)-\phi(\cdot-c(t+\tau))$ has only simple zeros. Then
$$
Z(\sigma_a u(t+t_n^{'},\cdot;u_n )-u(t+t_n^{'},\cdot;\phi))=Z(\sigma_a u(t,\cdot;u_0)-\phi(\cdot-c(t+\tau))\ge Z(\sigma_au_0-\phi(\cdot-c\tau))
$$
for $n\gg 1$.
Hence
\begin{equation}
\label{zero-eq1}
{ Z(\sigma_au_0-\phi) \ge Z(\sigma_a u(t_0,\cdot;u_0)-u(t_0,\cdot;\phi))}  >Z(\sigma_a u_0-\phi(\cdot-c\tau)).
\end{equation}

Suppose that
$$
Z(\sigma_au(t_0,\cdot;u_0)-u(t_0,\cdot;\phi))=Z(\sigma_a u(t,\cdot;u_0)-u(t,\cdot;\phi))
$$
for all $t\ge t_0$. It  follows from the above arguments that
\begin{equation}
\label{zero-eq2}
{ Z(\sigma_au_0-\phi) \ge Z(\sigma_a u(t_0,\cdot;u_0)-u(t_0,\cdot;\phi))} \ge Z(\sigma_a u_0-\phi(\cdot-c\tau)).
\end{equation}

Case (ia). There are $p,q\in\NN$ such that $c\tau p=qT$. Assume that there is  { $\tau_0>t_0$} such that
$$
Z(\sigma_au(t_0,\cdot; u_0)-u(t_0,\cdot;\phi))>Z(\sigma_a u(\tau_0,\cdot;u_0)-u(\tau_0,\cdot;\phi)).
$$
By using \eqref{zero-eq1} and \eqref{zero-eq2} repeatedly,
$$
Z(\sigma_au_0-\phi)>Z(\sigma_a u_0-\phi(\cdot-c\tau))\ge Z(\sigma_a u_0-\phi(\cdot-2c\tau))\ge \cdots\ge Z(\sigma_a u_0-\phi(\cdot-c\tau p))
=Z(\sigma_au_0-\phi),
$$
which is a contradiction. Hence $\sigma_a u(t,\cdot;u_0)-u(t,\cdot;\phi)$ has only simple zeros for all $t\ge t_0$. This
implies that $\sigma_a u(t,\cdot;u_0)-u(t,\cdot;\phi)$ has only simple zeros for all $t>0$.

\medskip

Case (ib). $c\tau $ and $T$ are rationally independent. Assume that there is  { $\tau_0>t_0$} such that
$$
Z(\sigma_a u(t_0,\cdot;u_0)-u(t_0,\cdot;\phi))>Z(\sigma_a u(\tau_0,\cdot;u_0)-u(\tau_0,\cdot;\phi)).
$$
By using \eqref{zero-eq1} and \eqref{zero-eq2}  repeatedly again,
{ \begin{align*}
  Z(\sigma_au_0-\phi) &\ge Z(\sigma_a u(t_0,\cdot;u_0)-u(t_0,\cdot;\phi)) \\
&>Z(\sigma_a u_0-\phi(\cdot-c\tau))\ge Z(\sigma_a u_0-\phi(\cdot-2c\tau)\ge \cdots\ge Z(\sigma_a u_0-\phi(\cdot-c\tau k))\\
&\ge Z(\sigma_a u(t_0,\cdot;u_0)-u(t_0,\cdot;\phi(\cdot-c\tau k)).
\end{align*}}
Note that there is $k_n\to\infty$ such that
$$
\phi(\cdot-c\tau k_n)\to \phi(\cdot).
$$
Recall that { $\sigma_au(t_0,\cdot;u_0) -u(t_0,\cdot;\phi)$} has only simple zeros. Hence
$$
{  Z(\sigma_au(t_0,\cdot;u_0)-u(t,\cdot;\phi(\cdot-c\tau k_n)))=Z(\sigma_a u(t_0,\cdot;u_0)-u(t_0,\cdot;\phi))}
$$
for $n\gg 1$. It then follows that
$$
 Z(\sigma_au(t_0,\cdot;u_0)-u(t_0,\cdot;\phi)) >Z(\sigma_a u_0-\phi(\cdot-c\tau))\ge  Z(\sigma_au(t_0,\cdot;u_0)-u(t_0,\cdot;\phi)),
$$
which is a contradiction again. Hence, in this case,  $\sigma_a u(t,\cdot;u_0)-u(t,\cdot;\phi)$ has also simple zeros for $t\ge t_0$.
We have proved that
$\sigma_a u(t,\cdot;u_0)-u(t,\cdot;\phi)$ has only simple zeros for all $t>0$.

Now, we show that  $$
Z(\sigma_a u(t,\cdot;u_0)-u(t,\cdot;\phi))=N,\quad t>0,\ a\in S^1.
$$
{ First, by the above arguments, $\sigma_a u(t,\cdot;u_0)-u(t,\cdot;\phi)$ has only simple zeros for all $t>0$ and $a\in S^1$.}
Let $t_n\to\infty$ be such that
$$
  u(t_n,\cdot;u_0)\to  \phi,\quad \text{as } n\to\infty.
$$
Without loss of generality, assume that
$$
u(t_n,\cdot;\phi)\to \sigma_{\tilde a}\phi
$$
for some $\tilde a\in S^1$. Choose $a\not =\tilde a$.
Then
$$
\sigma_a u(t_n,\cdot;u_0)-u(t_n,\cdot;\phi)\to \sigma_a \phi-\sigma_{\tilde a}\phi
$$
as $n\to\infty$. By Lemma \ref{fixed-constant} (i),
$$
Z(\sigma_a \phi-\sigma_{\tilde a}\phi)=N.
$$
Therefore,
$$
Z(\sigma_a u(t,\cdot;u_0)-u(t,\cdot;\phi))=N
$$
for all $t>0$ and $a\in S^1$.
(i) is thus proved.

\medskip

(ii) By suitable modifications of the arguments in (i) before \eqref{zero-eq1},
$Z(\sigma_a u(t,\cdot;u_0)-\phi)$ is independent of $t>0$ and $a\in S^1$.
Let $ v(t,x)=u(t,x;u_0)-\phi$. Then $ v(t,x)$ satisfies
\begin{equation}\label{difference-equ}
v_t= v_{xx}+\hat a(t,x) v_x+\hat b(t,x) v,\quad x\in S^1,
\end{equation}
where
$$
\hat a(t,x)=\int_0^1{\p_2 f(\phi,s(u_x(t,x;u_0)))}ds
$$
and
$$
\hat b(t,x)=\int_0^1{\p_1 f(\phi+s(u(t,x;u_0)-\phi),u_x(t,x;u_0))}ds.
$$
{ Let $a=\p_2 f(\phi,0)$ and $b=\p_1 f(\phi,0)$.}
For simplicity, we assume that $b=k_0^2$, then by Lemma \ref{fixed-constant}, the Sacker-Sell spectrum of \eqref{fixed-point-linearized} can be written as $\sigma(\phi)=\{b,\cdots,-(k_0-1)^2+b,0,-(k_0+1)^2+b,\cdots\}$.
Choose  $0<-\lambda_0\ll 1$ be such that
\begin{equation}\label{fixed-point-linearized-trans}
v_t=v_{xx}+a v_x+(b+\lambda_0) v,\quad x\in S^1
\end{equation}
admits an exponential dichotomy and the Sacker-Sell spectrum $\{b+\lambda_{0},\cdots, -(k_0-1)^2+b+\lambda_0,\lambda_0, \cdots\}$ satisfying $-(k_0-1)^2+b+\lambda_0>0$.

By Lemma \ref{pertubation-invariant}, there is $\delta>0$ such that for any two bounded and uniformly continuous functions $a^0(t,x)\in C^1(\mathbb{R}\times S^1)$ and $b^0(t,x)\in C(\mathbb{R}\times S^1)$ satisfying $\|a^0(t,\cdot)-a \|<\delta$ and $\|b^0(t,\cdot)-b\|<\delta$, the following equation
\begin{equation}\label{difference-equ-2}
v_t= v_{xx}+\tilde a^0(t,x) v_x+(\tilde b^0(t,x)+\lambda_0)v,\quad x\in S^1,(\tilde a^0,\tilde b^0)\in H(a^0,b^0),
\end{equation}
admits an exponential dichotomy over $H(a^0,b^0)$.

Note that $\hat a(t,x)\to a$ and $\hat b(t,x)\to b$ as $t\to\infty$. Therefore, for any $\delta>0$, there exists $T>0$ such that $\|\hat a(t,x)-a\|<\delta$ and $\|\hat b(t,x)-b\|<\delta$, for $t>T$. Let $\varepsilon_a(t,x)=\hat a(t,x)-a$ and $\varepsilon_b(t,x)=\hat b(t,x)-b$.  Define the following two functions,
\begin{equation}\label{perturbation-coeff-a}
\tilde a(t,x)=\left\{
\begin{split}
 &a+\varepsilon_a(t,x),\quad t\geq T+1\\
 &a+(1+T-t)\varepsilon_a(T,x)+(t-T)\varepsilon_a(t,x),\quad T+1>t\geq T \\
 &a+\varepsilon_a(T,x),\quad t<T,
\end{split}\right.
\end{equation}
and
\begin{equation}\label{perturbation-coeff-b}
\tilde b(t,x)=\left\{
\begin{split}
 &b+\varepsilon_b(t,x),\quad t\geq { T+1}\\
 &b+(1+T-t)\varepsilon_b(T,x)+(t-T)\varepsilon_b(t,x),\quad T+1>t\geq T \\
 &b+\varepsilon_b(T,x),\quad t<T.
\end{split}\right.
\end{equation}
Since $\hat a(t,x)$ and $\hat b(t,x)$ are bounded and uniformly continuous functions and $\hat a(t,x)\in C(\mathbb{R}\times S^1)$, so are the functions $\tilde a(t,x)$ and $\tilde b(t,x)$. Moreover, $\|\tilde a(t,x)-a\|<\delta$ and $\|\tilde b(t,x)-b\|<\delta$ for $t\in\mathbb{R}$.
Therefore, the following equation
\begin{equation}\label{difference-equ}
v_t= v_{xx}+\tilde a^*(t,x) v_x+(\tilde b^*(t,x)+\lambda_0)v,\quad x\in S^1, (\tilde a^*,\tilde b^*)\in H(\tilde a,\tilde b)
\end{equation}
admits an exponential dichotomy over $H(\tilde a,\tilde b+\lambda_0)$ and $\dim E^u(\tilde a^*,\tilde b^*+\lambda_0)=\dim E^u(a,b)$. Moreover, it is obvious that $\dim E^u(\tilde a^*,\tilde b^*)\geq \dim E^u(\tilde a^*,\tilde b^*+\lambda_0)$. Therefore,  $\dim E^u(\tilde a^*,\tilde b^*)\geq \dim E^u(a,b)$ for all $(\tilde a^*,\tilde b^*)\in H(\tilde a, \tilde b)$.

Since $v(t,x)\to 0$, by the definition of center stable space over $H(\tilde a,\tilde b)$ (see also \eqref{infty-estimate}), it is not hard to see that $v(t,x)\in E^{cs}(\tilde a\cdot t,\tilde b\cdot t)$ for $t\geq T+1$. Thus, by zero number control on center stable spaces (see also Lemma \ref{zero-inva}), $Z(v(t,x))\geq N_0$ for $t\geq T+1$. This together with decreasing property of $Z(\cdot)$ implies that
$$
Z(u(t_0,\cdot;u)-\phi)\ge N_0,
$$
$t_0$ is sufficiently small as defined in the proof of Lemma \ref{fixed-constant}(ii).

Let $\tilde u$ be also as in the proof of Lemma \ref{fixed-constant}(ii).  Then
$$
Z(u(t_0,\cdot;u)-\phi)=Z(u(t_0,\cdot;\tilde u)-\phi).
$$
By similar arguments as in the above,
$$
Z(u(t_0,\cdot;\tilde u)-\phi)\le N_0.
$$
{Since $t_0>0$ can be arbitrary small}, it then follows that
$$
Z(\sigma_a u(t,\cdot;u_0)-\phi)=N_0
$$
for all $t>0$ and $a\in S^1$.
The lemma thus follows.
\end{proof}
\vskip 2mm

\begin{lem}\label{auton-T-const}
Assume that $u_0$ is not a fixed point and does not generate a rotating wave.
\begin{itemize}
\item[{\rm(i)}] Suppose $\phi$ is spatially-inhomogeneous. There are $T, T^*>0$ such that
$$
Z(u(t+T,\cdot;u_0)-\sigma_a u(t,\cdot;u_0))=N
$$
for all $t\geq T^*$ and $a\in S^1$, where $N$ is as in Lemma \ref{fixed-constant} (i).

\item[{\rm(ii)}] Suppose $\phi$ is spatially-homogeneous. There is $T>0$ such that
$$
Z(u(t+T,\cdot;u_0)-\sigma_a u(t,\cdot;u_0))=N_0
$$
for all $t\geq 1$ and $a\in S^1$, where $N_0$ is as in Lemma \ref{fixed-constant} (ii).
\end{itemize}
\end{lem}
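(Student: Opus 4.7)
The plan is to apply the zero-number machinery to $w_a(t,x) := u(t+T,x;u_0) - \sigma_a u(t,x;u_0)$. Since $\sigma_a u(t,x;u_0) = u(t,x;\sigma_a u_0)$ by translation invariance, $w_a$ satisfies a linear parabolic equation on $S^1$ of the form $(w_a)_t = (w_a)_{xx} + \hat a(t,x)(w_a)_x + \hat b(t,x)\, w_a$, with coefficients obtained from a Taylor expansion of $f$. By Lemma \ref{zero-number}, $Z(w_a(t,\cdot))$ is finite for $t>0$, non-increasing in $t$, and eventually stabilizes at a value at which all zeros are simple.

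For part (i), I choose $T$ so that $u(T,\cdot;\phi)=\phi$: take $T=2\pi/|c|$ in the rotating-wave case, and any $T>0$ if $\phi$ is a spatially-inhomogeneous fixed point. Fix $a\in S^1$ with $a\neq 0$, and extract $t_n\to\infty$ with $u(t_n,\cdot;u_0)\to\sigma_{\tilde a}\phi$ for some $\tilde a\in S^1$ (possible since $\omega(u_0)\subset\Sigma\phi$ is compact). Continuous dependence and the choice of $T$ then give $u(t_n+T,\cdot;u_0)\to\sigma_{\tilde a}\phi$ and $\sigma_a u(t_n,\cdot;u_0)\to\sigma_{a+\tilde a}\phi$, so $w_a(t_n,\cdot)\to\sigma_{\tilde a}\phi-\sigma_{a+\tilde a}\phi$, which by Lemma \ref{fixed-constant}(i) has exactly $N$ simple zeros. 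Hence $Z(w_a(t_n,\cdot))=N$ for large $n$ (continuity of $Z$ at functions with only simple zeros), and monotonicity plus stabilization of $Z$ force $Z(w_a(t,\cdot))=N$ for all $t\geq T^*(a)$. For $a=0$, the assumption that $u_0$ is neither a fixed point nor a rotating wave, combined with \cite{Matano}, gives $u(T,\cdot;u_0)\neq u_0$ (otherwise $u_0$ would be $T$-periodic and lie on $\Sigma\phi$), so $w_0\not\equiv 0$; continuous dependence of $w_a$ on $a$ together with simplicity of zeros for large $t$ extend $Z(w_a(t,\cdot))=N$ to $a=0$, and a compactness argument on $S^1$ produces a uniform $T^*$.

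For part (ii), any $T>0$ works since $\phi$ is constant. As $t\to\infty$, both $u(t+T,\cdot;u_0)$ and $\sigma_a u(t,\cdot;u_0)$ tend to $\phi$, so $\hat a(t,\cdot)\to\p_2 f(\phi,0)$ and $\hat b(t,\cdot)\to\p_1 f(\phi,0)$ uniformly. Mirroring the proof of Lemma \ref{auton-inhomo-cons}(ii), I apply Lemma \ref{pertubation-invariant} to a suitable $\lambda_0$-shift of the linearized equation to obtain an exponential dichotomy over the hull. The definition \eqref{infty-estimate} of $E^{cs}$ combined with $w_a(t,\cdot)\to 0$ places $w_a(t,\cdot)\in E^{cs}$ for $t$ large, and Lemma \ref{zero-inva} then yields $Z(w_a(t,\cdot))\geq N_0$. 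For the matching upper bound, once it is known that the projection $P_{E^c}w_a(t,\cdot)$ is nonzero for $t$ large, the exponentially faster decay of $P_{E^s}w_a$ and the constancy of $Z$ on $E^c$ (Lemma \ref{zero-inva}) force $Z(w_a(t,\cdot))=N_0$ beyond a threshold.

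The main obstacle is verifying $P_{E^c}w_a(t,\cdot)\neq 0$ uniformly in $a\in S^1$. I argue by contradiction: suppose $P_{E^c}w_a(t,\cdot)\equiv 0$ for all large $t$. In the case $\dim E^c=2$, with $E^c=\mathrm{span}\{\sin k_0x,\cos k_0x\}$, the shift $\sigma_a$ acts as rotation by angle $k_0 a$ on the $E^c$-component; encoding $P_{E^c}(u(t,\cdot;u_0)-\phi)$ as a complex scalar $z(t)$, the hypothesis reduces to $z(t+T)=e^{ik_0 a}z(t)$. Iterating gives $|z(t+nT)|=|z(t)|$, but Lemma \ref{auton-inhomo-cons}(ii) and $u(t,\cdot;u_0)-\phi\to 0$ force $z(t+nT)\to 0$, hence $z\equiv 0$; this contradicts Lemma \ref{auton-inhomo-cons}(ii), which (via zero-number control on $E^{cs}$) guarantees $z(t)\neq 0$ for all $t>0$, since otherwise $u(t,\cdot;u_0)-\phi\in E^s$ and $Z>N_0$ by Lemma \ref{zero-inva}. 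The case $\dim E^c=1$, where $E^c=\mathrm{span}\{1\}$ and $z(t)=\frac{1}{2\pi}\int(u(t,x;u_0)-\phi)\,dx$, is analogous with the identity becoming $z(t+T)=z(t)$.
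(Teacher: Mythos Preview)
Your treatment of part (i) follows the paper's strategy in broad outline, but the handling of $a=0$ has a gap. For $a\neq 0$ the limit $w_a(t_n,\cdot)\to\sigma_{\tilde a}\phi-\sigma_{a+\tilde a}\phi$ is nonzero with $N$ simple zeros, so stabilization of $Z$ pins down the eventual value as $N$; but for $a=0$ this limit is zero and carries no zero-number information. Your continuity argument then only shows that the stabilized value $M$ of $Z(w_0(t,\cdot))$ satisfies $M\ge N$ (since $Z(w_a(t^*,\cdot))=M$ for small $a$, and $Z(w_a(\cdot,\cdot))$ is non-increasing with eventual value $N$), not $M=N$. The paper supplies the missing upper bound by invoking Lemma \ref{auton-inhomo-cons}(i): it chooses $T$ large enough that $u(T+t_0,\cdot;u_0)$ is close to some $\sigma_{\tilde a}\phi$, whence $Z(w_0(t_0,\cdot))=Z(\sigma_{\tilde a}\phi-u(t_0,\cdot;u_0))=N$ directly, and then the continuity step at $a=0$ closes the argument.

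Part (ii) departs more substantially from the paper, and the upper-bound argument has a genuine problem. The paper does \emph{not} allow arbitrary $T>0$; it chooses $T\gg 1$ so that $\|\sigma_a u(1+T,\cdot;u_0)-\phi\|$ is small, and then Lemma \ref{auton-inhomo-cons}(ii) (which says $Z(u(1,\cdot;u_0)-\phi)=N_0$ with only simple zeros) gives $Z(u(1,\cdot;u_0)-\sigma_a u(1+T,\cdot;u_0))=N_0$ immediately---this is the entire upper bound, at $t=1$, uniformly in $a$. Your alternative via $P_{E^c}w_a\neq 0$ conflates two different center spaces. The algebra leading to $z(t+T)=e^{ik_0a}z(t)$ and the conclusion $z\equiv 0$ uses the Fourier projection onto $E^c(\phi)=\mathrm{span}\{\sin k_0x,\cos k_0x\}$; but the splitting on which ``$w_a\in E^{cs}$'' and the decay comparison rest is the one over the hull of the time-dependent coefficients, obtained from Lemma \ref{pertubation-invariant}. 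From $z(t)=0$ you infer $u(t,\cdot;u_0)-\phi\in E^s$ and hence $Z>N_0$; however $z(t)=0$ only kills the $k_0$-th Fourier mode and does not place $u(t,\cdot;u_0)-\phi$ in $E^s(\phi)$ (the $E^u(\phi)$-modes need not vanish) nor in $E^s(\text{hull})$ (which is not a Fourier subspace). Even granting your claim, the argument would yield $Z(w_a(t,\cdot))=N_0$ only beyond an $a$-dependent threshold, whereas the lemma requires it for all $t\ge 1$; the paper's choice of large $T$ is exactly what secures this uniformity at the starting time $t=1$.
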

\begin{proof}

(i)  First, note that for any fixed $t_0>0$ there is $T\gg 1$ (in the case that $u(t,x;\phi)=\phi(x-ct)$ with $c\not =0$,
$T=k\frac{2\pi}{|c|}$) such that
$$
\|u(T+t_0,\cdot;u_0)-\sigma_{\tilde a}\phi\|\ll 1
$$
for some $\tilde a\in S^1$. By { Lemma \ref{auton-inhomo-cons} (i)}
$$
Z(u(T+t_0,\cdot;u_0)-u(t_0,\cdot;u_0))=Z(\sigma_{\tilde a} \phi-u(t_0,\cdot;u_0))=N.
$$
Hence
$$
Z(u(t+T,\cdot;u_0)-u(t,\cdot;u_0))\le N\quad \forall \,\, t\ge t_0.
$$
Moreover, there is $T_0\geq t_0$ and some integer $N^*\leq N$ such that
\begin{equation}\label{constant-1}
Z(u(t+T,\cdot;u_0)-u(t,\cdot;u_0))=N^*,\quad \forall t\geq T_0.
\end{equation}
Recall that there is $t_n\to\infty$ such that
$$
u(t_n,\cdot;u_0)\to \phi.
$$
Hence for any $ a\in S^1\setminus\{0\}$,
$$
u(t_n+T,\cdot;u_0)-\sigma_{  a} u(t_n,\cdot;u_0)\to  \phi-\sigma_{a}\phi.
$$
This implies that
\begin{equation}\label{tranlation-const}
 Z(u(t+T,\cdot;u_0)-\sigma_{ a} u(t,\cdot;u_0))=N
\quad \forall\,\, t\gg 1.
\end{equation}
By \eqref{constant-1}, there exists $\delta_0>0$ such that for any $0<a<\delta_0$, one has
\[
Z(u(T_0+T,\cdot;u_0)- \sigma_au(T_0,\cdot;u_0))=N^{*},
\]
which combining with \eqref{tranlation-const} lead to that $N^{*}=N$. Therefore,
$$
Z(u(t+T,\cdot;u_0)- u(t,\cdot;u_0))=N\quad \forall\,\, t\geq t_0.
$$
Note that $S^1$ is compact, it then follows from Heine-Borel theorem that there is $T^*>0$ such that
$$
Z(u(t+T,\cdot;u_0)-\sigma_a u(t,\cdot;u_0))=N
$$
for $t\ge T^*$ and $a\in S^1$.

\medskip
(ii) By { Lemma \ref{auton-inhomo-cons} (ii)},
$$
Z(\sigma_au(t,\cdot;u_0)-\phi)=N_0,\quad \forall\ { t>0}, \ a\in S^1
$$
Choose $T\gg 1$ be such that
$$
\|\sigma_a u(1+T,\cdot;u_0)-\phi\|\ll 1.
$$
Then
$$
Z(u(1,\cdot;u_0)-\sigma_a u(1+T,\cdot;u_0))=N_0.
$$
This implies that
$$
Z(u(t,\cdot;u_0)-\sigma_a u(t+T,\cdot;u_0))\le N_0,\quad \forall\ t\geq 1, \ a\in S^1.
$$

For given $a\in S^1$, let
$v(t,x)=\sigma_a u(t+T,x;u_0)-u(t,x;u_0)$. Then $v(t,x)$ satisfies
\[
v_t=v_{xx}+a(t,x)v_x+b(t,x)v,\quad x\in S^1,
\]
where {
$$
a(t,x)=\int_0^1{\p_2 f(u(t,x;u_0),s(\sigma_a u_x(t+T,x;u_0)-u_x(t,x;u_0)))}ds
$$
and
$$
b(t,x)\\=\int_0^1{\p_1 f(u(t,x;u_0)+s(\sigma_a u(t+T,x;u_0)-u(t,x;u_0)),\sigma_au_x(t+T,x;u_0))}ds.
$$}

Note that $a(t,x)\to \p_2 f(\phi,0)$ and $b(t,x)\to \p_1 f(\phi,0)$ as $t\to\infty$, and
$v(t,x)\to 0$ as $t\to\infty$. Reasoning as in the proof of Lemma \ref{auton-inhomo-cons} (ii), one has
$$
Z(\sigma_a u(t+T,\cdot;u_0)-u(t,\cdot;u_0))\ge N_0.
$$
(ii) then follows.
\end{proof}
\vskip 2mm

We now prove Theorem \ref{aut-thm}.

\begin{proof}[Proof of Theorem \ref{aut-thm}]
Assume that $u_0$ is neither a fixed point nor generates a rotating wave.
By the above lemmas, there are $T,T^*\geq 0$, and $N^*\in\NN$ such that
\begin{equation}\label{constant-sufficient-L}
Z(\sigma_a u(t+T,\cdot;u_0)-u(t,\cdot;u_0))=N^*,\quad \forall\ t\ge T^*,\ a\in S^1.
\end{equation}
Therefore, $\max u(t+T,\cdot;u_0)\not =\max u(t,\cdot;u_0)$ for $t\ge T^*$. Without loss of generality, we may assume that $T^*=0$ and $\max u(t+T,\cdot;u_0)>\max u(t,\cdot;u_0)$ for $t\geq 0$. Then
\begin{equation}
\label{aux-new-eq1}
 \max u(t,\cdot;u_0)<\max u(t+T,\cdot;u_0)<\max u(t+2T,\cdot;u_0)<\cdots<\max u(t+kT,\cdot;u_0)<\cdots
 \end{equation}
 for all $t\ge 0$.

Let $u_n $ and $t_n^{'}\to \infty$ be such that
$$
u_n \to u_0,\quad u(t_n^{'},\cdot;u_n )\to u_0.
$$
Then
$$
\sigma_a u(T,\cdot;u_n )-u_n \to \sigma_a u(T,\cdot;u_0)-u_0
$$
and
$$
\sigma_a u(t_n^{'}+T,\cdot;u_n )-u(t_n^{'},\cdot;u_n) \to \sigma_a u(T,\cdot;u_0)-u_0.
$$
Hence there is $K_0>0$ such that
\begin{equation}\label{const-seuqence}
Z(\sigma_a u(t+T,\cdot;u_n)-u(t,\cdot;u_n ))=N^*
\end{equation}
for $a\in S^1$, $0\le t\le t_n^{'}$, and $n\ge K_0$. This implies
that for any fixed $\tau\in [0,T)$, $\max u(kT+\tau,\cdot;u_n )$ is increasing in $k\in\NN$ provided that $kT+\tau\le t_n^{'}$ and $n\gg 1$. Let $t_n^{'}=k_nT+\tau_n$, where $k_n\in\NN$, $\tau_n\in [0,T)$. Without loss of generality, we may assume that $\tau_n\to \tau_0\in [0,T]$ (i.e., $\tau_0-\tau_n\to 0$). Then $u(k_nT+\tau_0,\cdot;u_n )\to u_0$. Noticing that
$\max u_n  \to \max u_0$, one has
$$
\max u(k_nT+\tau_0,\cdot;u_n )\to \max u_0.
$$
Moreover, we assume that $\tau_0\neq 0$ (Since if $\tau_0=0$, it is easy to combine with \eqref{aux-new-eq1} to get a contradiction). It then follows that
\begin{equation}\label{max-inequal}
\begin{split}
\max u(\tau_0,\cdot; u_0)&\leftarrow \max u(\tau_0,\cdot; u_n )\\
&<\max u(T+\tau_0,\cdot;u_n )\\
&<\cdots\\
&<\max u(k_nT+\tau_0,\cdot;u_n )\\
&\to \max u_0.
\end{split}
\end{equation}
As a matter of fact, by \eqref{constant-sufficient-L}, for any $k\in \mathbb{N}$, there is $K(k)>0$ such that
$$
Z(\sigma_a u(t+T,\cdot;u(k\tau_0,\cdot;u_n))-u(t,\cdot;u(k\tau_0,\cdot;u_n)))=N^*
$$
for $a\in S^1$, $0\le t\le t_n^{'}$, and $n\ge K(k)$. Repeating the arguments between equations \eqref{const-seuqence} and \eqref{max-inequal}, one can further get
$$
\max u(k\tau_0,\cdot; u_0)<\max u((k-1)\tau_0,\cdot; u_0).
$$
Therefore,
\begin{equation}
\label{aux-new-eq2}
\max u_0>\max u(\tau_0,\cdot;u_0)>u(2\tau_0,\cdot;u_0)>\cdots> u(k\tau_0,\cdot;u_0)>\cdots.
\end{equation}

Assume that $\tau_0$ and $T$ are rationally dependent. Then there are $p,q\in\NN$ such that $p\tau_0=q T$. This together with \eqref{aux-new-eq2}
implies that
$$
\max u_0>\max u(qT,\cdot;u_0),
$$
which contradicts to \eqref{aux-new-eq1}.

Assume that $\tau_0$ and $T$ are rationally independent. Then there are $p_n,q_n\in \NN\setminus \{1\}$ and
$r_n\to 0$ such that $p_n \tau_0=q_n T+r_n$. This together with \eqref{aux-new-eq1} and \eqref{aux-new-eq2} implies that
$$
\max u_0>\max u(\tau_0,\cdot;u_0)>\max u(p_n\tau_0,\cdot;u_0)=\max u(q_nT+r_n,\cdot;u_0)>\max u(r_n,\cdot;u_0) \to \max u_0,
$$
which is a contradiction. Therefore, we must have $u_0$ is a fixed point or generating a rotating wave.
\end{proof}

\section{The time periodic case}
In this section, we consider the non-wandering points of \eqref{general-equa}+\eqref{periodic-eq}. Since some of the deductions are very similar to {those} in the autonomous case, we will focus on the places where the deductions are different, and only outline the idea of their derivations when the proofs are analogous to {those} in the autonomous case.

 The associated Poincar\'{e} map $P$ of \eqref{general-equa}+\eqref{periodic-eq} is defined as follows:
$$
Pu_0=u(T,\cdot;u_0),\quad u_0\in X.
$$
A compact invariant set $A$ of $P$ is said to be a {\it maximal compact invariant set} if every compact invariant set of $P$ is a subset of $A$. An invariant set $\mathcal{A}$ of $P$  is said to be a {\it global attractor} {(see, e.g. \cite[p.17]{Hale88})} of \eqref{general-equa}+\eqref{periodic-eq}  if $\mathcal{A}$   is a maximal compact invariant set which attracts each bounded set $B\subset X$ under $P$, that is
$$
\lim_{n\to\infty}\sup_{u'\in B}\inf_{u\in\mathcal{A}}\|P^n(u')-u\|=0.
$$

 Throughout this section, we always assume that  \eqref{general-equa}+\eqref{periodic-eq} admits a compact global attractor $\mathcal{A}$.
 Then, the following family of equations:
\begin{equation}\label{general-induced-equa}
u_{t}=u_{xx}+f(t+\tau,u,u_{x}),\,\,t>0,\, \tau\in[0,T),\,x\in S^{1}=\mathbb{R}/2\pi \mathbb{Z},
\end{equation}
also admit global attractors $\mathcal{A}_\tau=u(\tau,\cdot;\mathcal{A})$.

\begin{defn}\label{non-wan-peri}
{\rm
A point $u_0\in X$ is said to be a non-wandering point of \eqref{general-equa}+\eqref{periodic-eq} {if for any neighborhood
 $U$ of $u_0$, and any integer $N_0>0$, there exists an integer $n>N_0$ such that $\{u(nT,\cdot;u')\,|\, u'\in U\}\cap U\not =\emptyset$ (see, e.g. \cite[p.106]{wiggins}).}}
  \end{defn}

Obverse that if $u_0\in X$ is a non-wandering point of \eqref{general-equa}+\eqref{periodic-eq}, then there are $X\ni u_n\to u_0$ and $k_n\in \mathbb{N}$ with $k_n\to \infty$ such that $u(k_nT,\cdot;u_n)\to u_0$ as $n\to\infty$.  Moreover, similar to the arguments after Definition \ref{non-wan-auto}, $u(t,\cdot;u_0)$ also exists for all $t\in \mathbb{R}$.

In the rest of this section, we assume that  $u_0\in X$  is a non-wandering point of
\eqref{general-equa}+\eqref{periodic-eq}. Let $\tilde \omega(u_0)$ be the $\omega$-limit set of $u_0$ with respect to the time $T$-map or Poincar\'{e} map $P$. Then by \cite[Theorem 1]{SF1}
\begin{equation}
\label{omega-limit-set-eq2}
\tilde \omega(u_0)\subset \Sigma \phi=\{\sigma_a \phi\,|\, a\in S^1\},
\end{equation}
where $\phi\in X$. Moreover, there is {$r\in S^1$} such that $P w=\sigma_r w$ for any $w\in \tilde \omega(u_0)$.

In the following, $\phi$ is assumed to be as in \eqref{omega-limit-set-eq2}. We say that $u_0$ is  a {{\it $T$-periodic point}} if $P u_0=u_0$
and that $u_0$ {\it generates a rotating wave on the torus}  if $u_0(\cdot)\not\equiv {\rm const}$ and $P u_0=\sigma_ r u_0$ for some $r\in S^1$ with $\sigma_r u_0\not =u_0$.
\vskip 2mm

We have the following main result
\medskip
\begin{thm}\label{peri-thm} Suppose that  $u_0\in X$  is a non-wandering point of
\eqref{general-equa}+\eqref{periodic-eq}. Then
$u_0$ is a {$T$-periodic point} or generates a rotating wave on the torus.
\end{thm}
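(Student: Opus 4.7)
The plan is to mirror the proof of Theorem \ref{aut-thm}, with the continuous shift by $T$ replaced by an integer iterate $P^K$. Suppose for contradiction that $u_0$ is neither a $T$-periodic point nor generates a rotating wave on the torus, so that $Pu_0\notin\Sigma u_0$. Using $\tilde\omega(u_0)\subset\Sigma\phi$ with $P\phi=\sigma_r\phi$ from \cite{SF1}, I would first establish discrete analogs of Lemmas \ref{fixed-constant}, \ref{auton-inhomo-cons} and \ref{auton-T-const}, distinguishing the cases where $\phi$ is spatially homogeneous or not.

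The central step is to produce integers $K\geq 1$, $K^*\geq 0$ and $N^*\in\NN$ such that
\[
Z(\sigma_a P^{n+K}u_0-P^n u_0)=N^*,\qquad n\geq K^*,\ a\in S^1,
\]
with $\sigma_a P^{n+K}u_0-P^n u_0$ having only simple zeros. If $\phi$ is spatially inhomogeneous, then $P^k\phi=\sigma_{kr}\phi\in\Sigma\phi$, and one chooses $K$ so that $P^K\phi$ approximates points of $\Sigma\phi$ via the closure of $\{kr:k\in\NN\}$ in $S^1$ (which is either finite or dense); the value of $N^*$ is then the constant $N:=Z(\phi-\sigma_a\phi)$ given by Lemma \ref{zero-number}. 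If $\phi$ is spatially homogeneous, then $\phi$ is a $T$-periodic point of $P$; the arguments of Lemmas \ref{fixed-constant}(ii) and \ref{auton-inhomo-cons}(ii) adapt to show that $\phi$ is neither stable nor hyperbolic for $P$, and the lower bound on $Z$ is obtained by linearizing $v:=\sigma_a P^{n+K}u_0-P^n u_0$ along $P^n u_0$, using Lemma \ref{pertubation-invariant} to transfer the exponential dichotomy of the $T$-periodic linearization at $\phi$ to a perturbed hull, and invoking Lemma \ref{zero-inva}.

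Given the constancy, $\max P^{n+K}u_0\neq\max P^n u_0$ for $n\geq K^*$. After replacing $u_0$ by the non-wandering point $P^{K^*}u_0$, I assume $K^*=0$ and, without loss of generality, $\max P^n u_0<\max P^{n+K}u_0$ for all $n\geq 0$. Pick $u_n\to u_0$ and $k_n\in\NN$ with $k_n\to\infty$ and $P^{k_n}u_n\to u_0$, and write $k_n=m_n K+\ell_n$ with $\ell_n\in\{0,\dots,K-1\}$. Since $Z(v_n(t,\cdot))$ with $v_n(t,\cdot):=\sigma_a u(t+KT,\cdot;u_n)-u(t,\cdot;u_n)$ is non-increasing in $t$ by Lemma \ref{zero-number}, and since at both $t=0$ and $t=k_n T$ it equals $N^*$ with only simple zeros by approximation of $\sigma_a P^K u_0-u_0$, we get $Z(\sigma_a P^{n+K}u_n-P^n u_n)=N^*$ for $0\leq n\leq k_n-K$ once $n$ is large; hence $\{\max P^{jK+\ell_n}u_n\}_{j=0}^{m_n}$ is strictly increasing. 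Passing to a subsequence with $\ell_n\equiv\ell_0$ and letting $n\to\infty$ gives $\max P^{\ell_0}u_0\leq\max u_0$; the case $\ell_0=0$ would force $\max P^K u_0\leq\max u_0$, contradicting monotonicity, so $\ell_0\geq 1$ and $\max P^{\ell_0}u_0<\max u_0$. Since $P^{\ell_0}u_0$ is itself non-wandering (witnessed by $(P^{\ell_0}u_n,k_n)$), the same argument iterates to yield a strictly decreasing chain $\{\max P^{j\ell_0}u_0\}_{j\geq 0}$. Choosing $j_0:=K/\gcd(K,\ell_0)$ so that $j_0\ell_0=q_0 K$ for some positive integer $q_0$, one obtains simultaneously $\max P^{j_0\ell_0}u_0<\max u_0$ (decreasing chain) and $\max P^{j_0\ell_0}u_0=\max P^{q_0 K}u_0>\max u_0$ (increasing chain), a contradiction.

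The main obstacle will be the lower bound on the zero number in the spatially-homogeneous case: the linearization along $u(t,\cdot;u_0)$ is non-autonomous with coefficients only approaching the $T$-periodic coefficients at $\phi$ as $t\to\infty$, so one must embed both systems in a common compact hull and apply the roughness Lemma \ref{pertubation-invariant}, exactly as in the proof of Lemma \ref{auton-inhomo-cons}(ii). Beyond this, the argument is essentially a transcription of the autonomous proof with continuous time replaced by iterates of $P$, and the rational/irrational dichotomy of the autonomous case collapses to a finite pigeonhole, which keeps the conclusion clean in the periodic setting.
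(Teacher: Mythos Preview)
Your proposal is correct and follows essentially the same route as the paper: the paper proves the periodic analogs of Lemmas \ref{fixed-constant}--\ref{auton-T-const} as Lemmas \ref{period-center-constant}--\ref{peri-mT-const}, and then instructs one to rerun the proof of Theorem \ref{aut-thm} with $T$ replaced by $mT$, the return times $t_n'$ replaced by $k_nT$, and the remainders $\tau_n$ taking values in $\{0,T,\dots,(m-1)T\}$, which is exactly your discrete scheme with $K=m$ and $\ell_n=\tau_n/T$. Your observation that the rational/irrational dichotomy collapses to a gcd/pigeonhole argument is precisely why only Case (ia) of the autonomous proof survives here.
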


Before proving the above theorem, we prove some lemmas.

\begin{lem}\label{period-center-constant}
\begin{itemize}
\item[{\rm(i)}] If $\phi$ is spatially-inhomogeneous, then there is $N\in\NN$ such that
$$
Z(\phi-\sigma_a \phi)=N
$$
for any $a\in S^1\setminus\{0\}$.
\item[{\rm(ii)}] If $\phi$ is spatially-homogeneous, {then $\phi$ is neither stable nor hyperbolic.  And either $\dim E^u=0$ with $\dim E^c=1$ or $\dim E^u$ is odd with $\dim E^s=2$,} and there is $N_0\in \NN$ such that
$$
Z(v(\cdot))=N_0
$$
for any $v\in E^c\setminus\{0\}$.
\end{itemize}
\end{lem}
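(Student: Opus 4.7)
The plan is to mirror Lemma~\ref{fixed-constant} from the autonomous case, reading the Poincar\'{e} map $P$ in place of the flow $S_t$ and discrete integer iterates in place of continuous times. Throughout I assume, as in the autonomous case, that $u_0$ is neither a $T$-periodic point nor generates a rotating wave on the torus (otherwise there is nothing to prove).

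For (i), the key observation is that $P\phi=\sigma_r\phi$, so $P^n\phi-P^n(\sigma_a\phi)=\sigma_{nr}(\phi-\sigma_a\phi)$, whence $Z(u(nT,\cdot;\phi)-u(nT,\cdot;\sigma_a\phi))=Z(\phi-\sigma_a\phi)$ for every $n\in\mathbb{N}$. Combined with Lemma~\ref{zero-number}(a), this forces $Z(u(t,\cdot;\phi)-u(t,\cdot;\sigma_a\phi))$ to be constant in $t\ge 0$; then Lemma~\ref{zero-number}(b) implies that $\phi-\sigma_a\phi$ has only simple zeros, and the simple-zero count varies continuously in $a$. Connectedness of $S^1\setminus\{0\}$, combined with the symmetry $Z(\phi-\sigma_a\phi)=Z(\phi-\sigma_{-a}\phi)$, then yields a single value $N\in\mathbb{N}$.

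For (ii), the non-stability of $\phi$ as a fixed point of $P$ is a verbatim transcription of the autonomous proof: since $\phi$ is spatially-homogeneous one has $\tilde\omega(u_0)=\{\phi\}$ and thus $P^nu_0\to\phi$, and combining this with a non-wandering sequence $u_n\to u_0$, $k_n\to\infty$ integers, $P^{k_n}u_n\to u_0$, drives $u_0$ into $B_\epsilon(\phi)$ for any $\epsilon$ coming from the stability hypothesis, contradicting $\epsilon<\|u_0-\phi\|$. Hence $\dim E^{cu}>0$. The non-hyperbolicity is then obtained by the escape-time construction: after a diagonal extraction one has integers $m_n<k_n$ with $P^{m_n}u_n\to\phi$, $P^{k_n}u_n\to u_0$ and $k_n-m_n\to\infty$; fixing $\delta\ll\|u_0-\phi\|$ and letting $\tau_n\in(m_n,k_n)$ be the first integer with $\|P^{\tau_n}u_n-\phi\|=\delta$, compactness of the attractor yields a subsequential limit $\tilde u=\lim P^{\tau_n}u_n$ possessing a full backward $P$-orbit in the closed $\delta$-ball about $\phi$. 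Were $\phi$ hyperbolic, Lemma~\ref{local-um} would place $\tilde u\in W^u_{loc}(\phi)$, and the comparison of $Z(u(t_0,\cdot;u_0)-\phi)$ with $Z(u(t_0,\cdot;\tilde u)-\phi)$ via Lemma~\ref{zero-number} and the zero-number inequalities on the local invariant manifolds (Lemma~\ref{zero-inva}) would produce the same strict-inequality contradiction as in Lemma~\ref{fixed-constant}(ii).

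For the structural claim on $\dim E^u$ and $\dim E^c$, set $a(t)=\p_2 f(t,\phi,0)$ and $b(t)=\p_1 f(t,\phi,0)$, and use the change of variables $v(t,x)=\exp\bigl(\int_0^t b(s)\,ds\bigr)\,w\bigl(t,x+\int_0^t a(s)\,ds\bigr)$ to reduce the linearization at $\phi$ to the pure heat equation $w_t=w_{xx}$ on $S^1$. Fourier separation then produces the Sacker-Sell spectrum $\{\bar b-k^2:k\ge 0\}$ with $\bar b=T^{-1}\int_0^T b(s)\,ds$: each $k\ge 1$ contributes a two-dimensional invariant subspace spanned at the base point by $\cos kx,\sin kx$, while $k=0$ contributes the constants. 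Combined with $\dim E^{cu}>0$, the two alternatives $\bar b=0$ and $\bar b=k_0^2$ for some $k_0\ge 1$ give respectively $(\dim E^c,\dim E^u)=(1,0)$ and $(2,\,2k_0-1)$, with the latter odd. The constancy of $Z$ on $E^c$ is trivial in the one-dimensional case ($N_0=0$); in the two-dimensional case a direct inspection of $\alpha\cos k_0x+\beta\sin k_0x$ yields $N_0=2k_0$, or one alternatively invokes \cite[Theorem~2]{Angenent1988} applied to this finite-dimensional invariant subspace. The step I expect to require the most care is the non-hyperbolicity argument, both in executing the diagonal extraction giving $m_n<\tau_n<k_n$ with the three simultaneous convergences and in transferring the Matano-type drop of $Z$ on $W^s_{loc}(\phi)\cup W^u_{loc}(\phi)$ from continuous time to the Poincar\'{e} map.
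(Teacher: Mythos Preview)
Your proposal is correct and follows essentially the same approach as the paper. Your part (i) is in fact slightly cleaner than the paper's: by noting directly that $P^n\phi-P^n(\sigma_a\phi)=\sigma_{nr}(\phi-\sigma_a\phi)$ has the same zero count as $\phi-\sigma_a\phi$ for every $n$, you bypass the paper's passage through a subsequence with $nr\to 0\pmod{2\pi}$ and a $\pm t_a$ sandwich; part (ii) matches the paper's argument (the paper also builds $\tilde u$ via an integer escape-time $\tilde k_n$ and compares zero numbers on the local invariant manifolds), and your explicit Floquet reduction with $\bar b=T^{-1}\int_0^T b$ supplies exactly the detail the paper omits by referring back to Lemma~\ref{fixed-constant}(ii).
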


\begin{proof}
The proof of this lemma is similar to that in Lemma \ref{fixed-constant}.

(i) Noticing that $u(nT,x;\sigma_a\phi)=\sigma_{a+nr}\phi(x)$ for all $n\in\ZZ$,
$$
u(nT,x;\phi)-u(nT,x;\sigma_a\phi)=\sigma_{nr}\phi(x)-\sigma_{a+nr}\phi(x).
$$
Without loss of generality, we assume that $\lim_{n\to\infty}nr=0 (\mathrm{mod} 2\pi)$ no matter whether $r$ and $\pi$ is rationally dependent. Therefore,
$$
u(nT,\cdot;\phi)-u(nT,\cdot;\sigma_a\phi)=\sigma_{nr}\phi(\cdot)-\sigma_{a+nr}\phi(\cdot)\to \phi-\sigma_a\phi
$$
as $n\to \infty$.

{ By Lemma \ref{zero-number},  for any given $a\in S^1\setminus\{0\}$, there is $t_a>0$ such that both $u(-t_a,\cdot;\phi)-u(-t_a,\cdot;\sigma_a\phi)$ and $u(t_a,\cdot;\phi)-u(t_a,\cdot;\sigma_a\phi)$ have only simple zeros on $S^1$. Observe that there also exist $T_a>0$ and $N_a\in\mathbb{N}$ such that
\begin{equation*}
Z(u(t,\cdot;\phi)-u(t,\cdot;\sigma_a\phi))=N_a, \, t\geq T_a.
\end{equation*}
Therefore,
\begin{equation*}
\begin{split}
  N_a &=Z(u(nT-t_a,\cdot,\phi)-u(nT-t_a,\cdot,\sigma_a\phi))=Z(u(-t_a,\cdot,\phi)-u(-t_a,\cdot,\sigma_a\phi))\\
  &\geq Z(\phi-\sigma_a\phi)\geq Z(u(t_a,\cdot,\phi)-u(t_a,\cdot,\sigma_a\phi))\\
  &=Z(u(nT+t_a,\cdot,\phi)-u(nT+t_a,\cdot,\sigma_a\phi))=N_a, \quad n\gg 1.
\end{split}
\end{equation*}
}
Hence $\phi-\sigma_a \phi$ has only simple zeros for any $a\in S^1\setminus\{0\}$. (i) then follows.

\medskip

(ii) First, it is easy to obtain $\dim E^{cu}>0$ by using similar deduction as in Lemma \ref{fixed-constant}(ii).

Next we prove that $\phi$ cannot be hyperbolic. Note that $u(t+T,\cdot;\phi)=u(t,\cdot;\phi)$ and $u(nT,\cdot;u_0)\to \phi$ as $n\to\infty$. Let $k_n >n$ be such that
$$
u(k_nT,\cdot;u_n )\to u_0
$$
as $n\to\infty$. Without loss of generality, we may assume that $u(nT,\cdot;u_n )\to \phi$ as $n\to\infty$. Then, reasoning as the proof in Lemma \ref{fixed-constant}(ii), $k_n-n\to\infty$.

Let $0<\delta\ll \|u_0-\phi\|$. For convenience, we assume that
$$
\|u_n -u_0\|<\delta, \quad \|u(k_n T,\cdot;u_n )-u_0\|<\delta,\quad \|u(nT,\cdot;u_n )-\phi\|<\delta\quad \forall\, n\ge 1.
$$
Then there are $\tilde k_n\in [n,k_n)$ such that
$$
\|u(k_n^{'}T,\cdot;u_n )-\phi\|<\delta\quad \forall\, k_n^{'}\in [n,\tilde k_n],\,\, n\ge 1,
$$
and
$$
\|u((\tilde k_n+1)T,\cdot;u_n )-\phi\|\ge \delta\quad \forall\, n\ge 1.
$$
Moreover, $\tilde k_n-n\to \infty$ as $n\to\infty$.

Since \eqref{general-equa}+\eqref{periodic-eq} admits a global attractor, for any $t\in \mathbb{R}$ and $n\gg 1$, $u(\tilde k_n T+t,\cdot;u_n )$ is well defined and precompact in $X$, for simplicity, assume that $u(\tilde k_n T+t,\cdot;u_n )\to u(t,\cdot,\tilde u_0)$. Thus, $u(t,\cdot;\tilde u_0)$ exists
for all $t\in\mathbb{R}$ and
$$\|u(kT,\cdot;\tilde u_0)-\phi\|\le \delta\quad  \forall\,\, k\in\ZZ^-.
$$

If $\phi$ is hyperbolic, then $u(nT,\cdot;u_0)\in W_{loc}^s(\phi)$ for $n\gg 1$. By choosing $\delta>0$ sufficiently small, $\tilde u_0\in W^u_{loc}(\phi)$. Also we can choose $0<t_0\ll 1$ be such that
$u(t_0,\cdot;u_0)-u(t_0,\cdot;\phi)$ and $u(t_0,\cdot;\tilde u)-u(t_0,\cdot;\phi)$ have only simple zeros. Since $\phi$ is spatially-homogeneous
\begin{equation*}
\begin{split}
Z(u(t_0,\cdot;u_0)-u(t_0,\cdot;\phi))&=Z(u(t_0,\cdot;u_n)-u(t_0,\cdot;\phi))\ge Z(u(\tilde k_nT+t_0,\cdot;u_n )-u(t_0,\cdot;\phi))\\
&=Z(u(t_0,\cdot;\tilde u)-u(t_0,\cdot;\phi))\ge Z(u(k_nT+t_0,\cdot;u_n )-u(t_0,\cdot;\phi))\\
&=Z(u(t_0,\cdot;u_0)-u(t_0,\cdot;\phi))
\end{split}
\end{equation*}
for $n\gg 1$. Therefore, $Z(u(t_0,\cdot;u_0)-\phi)=Z(u(t_0,\cdot;\tilde u)-\phi)$. On the other hand, by zero number control on local invariant manifolds (see also \cite[Corrolary 3.5]{SWZ}),
$$
{ Z(u(t_0,\cdot;u_0)-u(t_0,\cdot;\phi))\geq Z(u(nT,\cdot;u_0)-\phi)> Z(\tilde u_0-\phi)\geq Z(u(t_0,\cdot;\tilde u)-u(t_0,\cdot;\phi)),\quad n\gg 1},
$$
which is a contradiction. Hence $\phi$ is not hyperbolic.

The rest of the proof is almost the same as those in Lemma \ref{fixed-constant} (ii), we omit it here.
\end{proof}

\begin{lem}
\begin{itemize}\label{peri-const}
\item[{\rm(i)}]  If $\phi$ is spatially-inhomogeneous, then
$$Z(\sigma_a u(t,\cdot;u_0)-u(t,\cdot;\phi))=N$$
for all { $t>0$} and $a\in S^1$, where $N$ is as in Lemma \ref{period-center-constant}(i).

\item[{\rm(ii)}] If $\phi$ is spatially-homogeneous, then
$$
Z(\sigma_a u(t,\cdot;u_0)-u(t,\cdot;\phi))=N_0
$$
for all $a\in S^1$ and $t>0$, where $N_0$ is as in Lemma \ref{period-center-constant}(ii).
\end{itemize}
\end{lem}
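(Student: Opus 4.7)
The plan is to mirror the proof of Lemma \ref{auton-inhomo-cons}, with two changes: the Poincar\'e map $P$ plays the role of the time-$T$ map, and the identity $P\phi=\sigma_r\phi$ (so that $u(kT,\cdot;\phi)=\sigma_{kr}\phi$) replaces the autonomous identity $u(kT,\cdot;\phi)=\phi(\cdot-kcT)$ used there. Throughout, the difference $v_a(t,x):=\sigma_a u(t,x;u_0)-u(t,x;\phi)$ satisfies a linear parabolic equation of the form \eqref{linear-equa} with coefficients that are uniformly bounded in $(t,x)$, because both orbits lie in the compact global attractor $\mathcal{A}$. Hence Lemma \ref{zero-number} applies to $v_a$: $Z(v_a(t,\cdot))$ is finite for $t>0$, nonincreasing, eventually constant in $t$, and continuous in $a\in S^1$ on the complement of the (countable) set of $a$ for which $v_a$ has a multiple zero at the given $t$.

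First I would show that $Z(v_a(t,\cdot))$ is actually independent of $t>0$ (and of $a\in S^1$). Using that $u_0$ is non-wandering, pick $X\ni u_n\to u_0$ and integers $k_n\to\infty$ with $u(k_nT,\cdot;u_n)\to u_0$. After a subsequence assume $k_n r\to \bar r$ in $S^1$. By continuous dependence,
\[
\sigma_a u(k_nT+t,\cdot;u_n)-u(k_nT+t,\cdot;\phi)\longrightarrow \sigma_a u(t,\cdot;u_0)-\sigma_{\bar r}u(t,\cdot;\phi)\qquad (t\in\RR),
\]
since $u(k_nT+t,\cdot;\phi)=\sigma_{k_nr}u(t,\cdot;\phi)\to \sigma_{\bar r}u(t,\cdot;\phi)$. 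Exactly as in cases (ia) and (ib) of Lemma \ref{auton-inhomo-cons}, monotonicity of $Z$ under the flow yields a chain of inequalities; iterating by applying $P$ repeatedly and splitting into the subcases ``$\bar r$ and $\pi$ are rationally dependent'' versus ``rationally independent'' (in the latter case using density of $\{k\bar r\}$ in $S^1$ together with continuity of $Z$ at simple-zero times) forces any drop in $Z$ to contradict itself. Consequently $v_a(t,\cdot)$ has only simple zeros for all $t>0$, $Z(v_a(t,\cdot))$ is constant in $t>0$, and continuity in $a$ then gives independence in $a$ as well.

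To identify the constant in case (i), I would pick a subsequence $n_k\to\infty$ with $u(n_kT,\cdot;u_0)\to\phi$ (possible since $\tilde\omega(u_0)\subset \Sigma\phi$), and along a further subsequence assume $u(n_kT,\cdot;\phi)=\sigma_{n_kr}\phi\to \sigma_{\tilde a}\phi$. For any $a\in S^1$ with $a\ne \tilde a$,
\[
\sigma_a u(n_kT,\cdot;u_0)-u(n_kT,\cdot;\phi)\longrightarrow \sigma_a\phi-\sigma_{\tilde a}\phi,
\]
and Lemma \ref{period-center-constant}(i) gives $Z(\sigma_a\phi-\sigma_{\tilde a}\phi)=N$; combined with the constancy from the previous step this forces $Z(v_a(t,\cdot))=N$ for all $t>0$ and all $a\in S^1$. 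For case (ii), the invariant-space argument of Lemma \ref{auton-inhomo-cons}(ii) transfers almost verbatim: the linearization of $v(t,x)=u(t,x;u_0)-\phi$ has coefficients $\hat a(t,x)\to \p_2 f(\phi,0)$ and $\hat b(t,x)\to \p_1 f(\phi,0)$ as $t\to\infty$; after cutoff and the perturbation Lemma \ref{pertubation-invariant}, $v(t,\cdot)$ lies in the center-stable space over $H(\tilde a,\tilde b)$ for large $t$, so Lemma \ref{zero-inva} combined with Lemma \ref{period-center-constant}(ii) yields $Z(v(t,\cdot))\ge N_0$; the reverse inequality follows from the analogous backward argument along a trajectory on the local unstable set of $\phi$ obtained from the non-wandering sequence.

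The main obstacle is handling the rotation $r$: unlike the autonomous case, where a single period $T=2\pi/|c|$ returns $\phi$ to itself, here the $P$-orbit of $\phi$ sweeps out the circle $\Sigma\phi$ whose density on $S^1$ depends on the arithmetic of $r/2\pi$. Closing the zero-number chain in the irrational case will require extracting density of $\{k_n r\}$ in $S^1$ and combining it with the continuity of $Z(v_a)$ in $a$; the rational case needs one to trace a finite cycle without losing the strict monotonicity. Verifying that both possibilities for $\bar r$ (and the interplay with the parameter $a$) can be closed in one unified chain of inequalities is the only step that goes materially beyond the autonomous proof.
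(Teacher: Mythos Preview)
Your plan for part (i) is essentially the paper's own argument: the paper also replaces $t_n'$ by $k_nT$, extracts a rotation limit $\tau$ (your $\bar r$) from $k_n r \pmod{2\pi}$, derives the analogues of \eqref{zero-eq1}--\eqref{zero-eq2}, and closes the chain by the rational/irrational dichotomy on $\tau$. The identification of the constant via $\sigma_a\phi-\sigma_{\tilde a}\phi$ is also exactly what the paper does.

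For part (ii), however, the phrase ``transfers almost verbatim'' hides a genuine correction you have to make. In the periodic case a spatially-homogeneous $\phi$ is a fixed point of $P$, not of the flow: the orbit $u(t,\cdot;\phi)$ is a (generally nonconstant) $T$-periodic spatially-constant function $\psi(t)$ solving $\psi'=f(t,\psi,0)$. Two consequences:
\begin{itemize}
\item The correct difference is $v(t,x)=u(t,x;u_0)-u(t,x;\phi)$, not $u(t,x;u_0)-\phi$; the latter need not tend to $0$ as $t\to\infty$, so your cutoff/perturbation step would not apply to it.
\item The limiting linearized coefficients are the $T$-periodic functions $\p_3 f(t,u(t,\cdot;\phi),0)$ and $\p_2 f(t,u(t,\cdot;\phi),0)$, not the constants $\p_2 f(\phi,0)$, $\p_1 f(\phi,0)$. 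The spaces $E^u,E^c$ in Lemma~\ref{period-center-constant}(ii) come from the Sacker--Sell spectrum of this periodic-coefficient problem, and Lemma~\ref{pertubation-invariant} must be invoked with base flow $\Omega=H\big(\p_3 f(\cdot,u(\cdot,\cdot;\phi),0),\p_2 f(\cdot,u(\cdot,\cdot;\phi),0)\big)$ (a periodic orbit), not a singleton.
\end{itemize}
Once you make these substitutions the rest of your outline (center-stable containment for $t\gg 1$, the matching lower bound $\ge N_0$, and the backward bound $\le N_0$ via the $\tilde u_0$ constructed in Lemma~\ref{period-center-constant}(ii)) is exactly the paper's proof. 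So the strategy is right; only the ``verbatim'' import of the autonomous linearization is wrong and must be replaced by its periodic analogue.
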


\begin{proof}
It can be proved by similar arguments as those in Lemma \ref{auton-inhomo-cons}.

(i) Fix any $a\in S^1$ and any $t_0>0$ such that $\sigma_a u(t_0,\cdot;u_0)-u(t_0,\cdot;\phi)$ has only simple zeros. Let $k_nr=2k^{'}_n\pi+\tau_n$, where $\tau_n\in [0,2\pi)$. Without loss of generality assume that $\tau_n\to \tau$, then $u(k_nT,\cdot;\phi )\to \phi(\cdot+\tau)$. Moreover, by replacing $t'_n$ with $k_nT$ in the proving of Lemma \ref{auton-inhomo-cons}(i), we also have the following conclusions:

If there is $\tau_0>t_0$ such that
$$
Z(\sigma_au(t_0,\cdot; u_0)-u(t_0,\cdot;\phi))>Z(\sigma_a u(\tau_0,\cdot;u_0)-u(\tau_0,\cdot;\phi))
$$
with $\sigma_a u(\tau_0,\cdot;u_0)-u(\tau_0,\cdot;\phi)$ has only simple zeros. Then
\begin{equation}
\label{zero-eq3}
Z(\sigma_au_0-\phi) \ge Z(\sigma_a u(t_0,\cdot;u_0)-u(t_0,\cdot;\phi)) >Z(\sigma_a u_0-\phi(\cdot+\tau)).
\end{equation}
If
$$
Z(\sigma_a u(t_0,\cdot;u_0)-u(t_0,\cdot;\phi))=Z(\sigma_a u(t,\cdot;u_0)-u(t,\cdot;\phi)),\quad \forall t\ge t_0.
$$
Then
\begin{equation}
\label{zero-eq4}
Z(\sigma_a u_0-\phi)\ge Z(\sigma_a u_0-\phi(\cdot+\tau)).
\end{equation}
The remaining  of the proof for (i) can be divided into two cases: (ia) there are $p,q\in\NN$ such that $\tau=\frac{p\pi}{q}$; (ib) $\frac{\tau}{\pi}$ is an irrational number. While the proof for these two cases are also very similar to the deductions in Lemma \ref{auton-inhomo-cons}(i), we also omit it here.

\medskip

(ii) Let $ v(t,x)=u(t,x;u_0)-u(t,x;\phi)$. Then $ v(t,x)$ satisfies
\begin{equation}
v_t= v_{xx}+a(t,x) v_x+b(t,x) v,\quad x\in S^1
\end{equation}
where
$$
a(t,x)=\int_0^1{\p_3 f(t,u(t,x;u_0),su_x(t,x;u_0))}ds
$$
and
$$
b(t,x)\\=\int_0^1{\p_2 f(t,u(t,x;u_0)+s(u(t,x;u_0)-u(t,x;\phi)),u_x(t,x;u_0))}ds.
$$

Observing that { $a(t,x)-\p_3 f(t,u(t,x;\phi),0)\to 0$}, { $b(t,x)- \p_2 f(t,u(t,x;\phi),0)\to 0$}, and $v(t,x)\to 0$ as $t\to\infty$. Similarly as the arguments in Lemma \ref{auton-inhomo-cons}(ii),
$$
Z( u(t_0,\cdot;u_0)-u(t_0,\cdot;\phi))\ge N_0,
$$
{where $t_0$ is as in (i).}
Then, by the arguments of Lemma \ref{period-center-constant}(ii),
$$
Z(u(t_0,\cdot;\tilde u_0)-u(t_0,\cdot;\phi))=Z(u(t_0,\cdot;u_0)-u(t_0,\cdot;\phi)).
$$
Let $t\to-\infty$, by similar arguments as in the above,
$$
Z( u(t_0,\cdot;\tilde u_0)-u(t_0,\cdot;\phi))\le N_0.
$$
Therefore,
$$
Z(u(t_0,\cdot;\tilde u_0)-u(t_0,\cdot;\phi))=N_0.
$$
Since $t_0>0$ can be arbitrary small and $\phi$ is spatially-homogeneous. It then follows that
$$
Z(\sigma_a u(t,\cdot;u_0)-u(t,\cdot;\phi))=N_0
$$
for all $t>0$ and $a\in S^1$.
\end{proof}

\begin{lem}\label{peri-mT-const}
\begin{itemize}
\item[{\rm (i)}] Assume that $\phi$ is spatially-inhomogeneous.  There are $m\ge 1$ and $T^*>0$ such that
$$
Z(u(t+mT,\cdot;u_0)-\sigma_a u(t,\cdot;u_0))=N
$$
for all $t\geq T^*$ and $a\in S^1$, where $N$ is as in Lemma \ref{period-center-constant}(i).
\item[{\rm (ii)}] Assume that $\phi$ is spatially-homogeneous.  There is $m\ge 1$ such that
$$
Z(u(t+mT,\cdot;u_0)-\sigma_a u(t,\cdot;u_0))=N_0
$$
for all $t>0$ and $a\in S^1$, where $N_0$ is as in Lemma \ref{period-center-constant}(ii).
\end{itemize}
\end{lem}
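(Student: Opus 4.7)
The plan is to adapt the proof of Lemma \ref{auton-T-const} to the periodic setting, substituting an integer multiple $mT$ of the forcing period for the free parameter $T$ there and replacing Lemma \ref{auton-inhomo-cons} by its periodic analogue Lemma \ref{peri-const}. The common architecture is: (a) at some initial time $t_0$, pin down the value of $Z$ via $C^1$-closeness to an object built from $\Sigma\phi$ whose zero count is $N$ or $N_0$ by Lemma \ref{period-center-constant}; (b) propagate this value forward in $t$ using the monotonicity and eventual constancy of Lemma \ref{zero-number}(a),(c); (c) extend the identity in $a\in S^1$ by persistence of simple zeros under $C^1$-perturbations together with Heine--Borel compactness of $S^1$.

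For part (i), after relabelling so that $\phi$ itself belongs to $\tilde\omega(u_0)$ (possible since $\tilde\omega(u_0)\subset\Sigma\phi$ is nonempty and $P$-invariant), I pick $n_k\to\infty$ with $u(n_kT,\cdot;u_0)\to\phi$. The standing assumption that $u_0$ is neither a $T$-periodic point nor generates a rotating wave on the torus forces $r\neq 0$, so I fix $m\ge 1$ with $mr\not\equiv 0\ (\mathrm{mod}\ 2\pi)$. Continuous dependence on initial data then gives $u(n_kT+mT+t_0,\cdot;u_0)\to \sigma_{mr}u(t_0,\cdot;\phi)$ and $u(n_kT+t_0,\cdot;u_0)\to u(t_0,\cdot;\phi)$. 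Since $u(t_0,\cdot;\phi)$ again satisfies $Pw=\sigma_r w$, the argument used to prove Lemma \ref{period-center-constant}(i) applies verbatim and yields $Z(\sigma_{mr}u(t_0,\cdot;\phi)-u(t_0,\cdot;\phi))=N$ with all zeros simple once $t_0$ is chosen large enough (by Lemma \ref{zero-number}(c)). The $C^1$-persistence of simple zeros then gives $Z(u(n_kT+mT+t_0,\cdot;u_0)-u(n_kT+t_0,\cdot;u_0))=N$ for $k$ large. Writing $w(t,x)=u(t+mT,x;u_0)-u(t,x;u_0)$, which solves a linear parabolic equation on $S^1$, Lemma \ref{zero-number}(a),(c) forces eventual constancy $Z(w(t,\cdot))=N^{*}\le N$ for $t\ge T_0$; evaluating at $t=n_kT+t_0\ge T_0$ gives $N^{*}=N$. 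Extending from $a=0$ to all $a\in S^1$ follows the template of Lemma \ref{auton-T-const}(i): $w_a(t,x):=u(t+mT,x;u_0)-\sigma_a u(t,x;u_0)$ satisfies a linear equation (by $x$-invariance of $f$), continuity of $w_a(T_0,\cdot)$ in $a$ together with persistence of simple zeros gives $Z(w_a(t,\cdot))=N$ in a neighborhood of $a=0$, and the same procedure applied at each $a\in S^1$ combined with Heine--Borel yields a uniform $T^{*}>0$.

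For part (ii), $\phi$ is spatially-homogeneous, so $\sigma_a\phi=\phi$ for every $a$ and $\psi(t):=u(t,\cdot;\phi)$ is a spatially-constant $T$-periodic orbit. By Lemma \ref{peri-const}(ii), $Z(u(t_0,\cdot;u_0)-\psi(t_0))=N_0$ for every $t_0>0$, and by Lemma \ref{zero-number}(c) these zeros are simple once $t_0$ is large enough. Choose $m\ge 1$ large so that $\|u(t_0+mT,\cdot;u_0)-\psi(t_0)\|\ll 1$ (using $u(nT,\cdot;u_0)\to\phi$ and the $T$-periodicity of $\psi$); since $\sigma_a\psi(t_0)=\psi(t_0)$, $C^1$-closeness immediately gives $Z(u(t_0,\cdot;u_0)-\sigma_a u(t_0+mT,\cdot;u_0))=N_0$ for every $a\in S^1$ at once, and Lemma \ref{zero-number}(a) propagates the upper bound $Z(w_a(t,\cdot))\le N_0$ for $t\ge t_0$. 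The matching lower bound is obtained as in the proof of Lemma \ref{peri-const}(ii): $w_a$ solves a linear parabolic equation whose coefficients converge as $t\to\infty$ to those of the linearization of $f$ at $\phi$, and a coefficient-extension construction in the spirit of \eqref{perturbation-coeff-a}--\eqref{perturbation-coeff-b} together with Lemma \ref{pertubation-invariant} forces $w_a(t,\cdot)$ into the center-stable subspace for $t\gg 1$; Lemma \ref{zero-inva} then gives $Z(w_a(t,\cdot))\ge N_0$. I expect the principal obstacle to lie in this lower bound in (ii): one must verify the smallness required by Lemma \ref{pertubation-invariant} for the smoothly-interpolated coefficients and confirm that the dimension of the center-stable subspace in the hull construction matches that at $\phi$, so that Lemma \ref{zero-inva} delivers exactly $N_0$ and not a smaller integer.
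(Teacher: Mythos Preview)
Your part (ii) is essentially the paper's argument and is fine.

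There is a genuine gap in part (i). The assertion ``the standing assumption that $u_0$ is neither a $T$-periodic point nor generates a rotating wave on the torus forces $r\neq 0$'' is false: $r$ is the rotation attached to $\phi\in\tilde\omega(u_0)$, not to $u_0$. Nothing prevents $\phi$ from being a spatially-inhomogeneous fixed point of $P$ (i.e.\ $r=0$) while $u_0$ itself lies outside $\Sigma\phi$. In that case there is no $m$ with $mr\not\equiv 0$, so your limit $\sigma_{mr}u(t_0,\cdot;\phi)-u(t_0,\cdot;\phi)$ vanishes identically and the computation of $N^{*}$ at $a=0$ collapses. More generally, for any $m$ the limit along $n_kT$ of $w_a(n_kT+t_0,\cdot)$ is $\sigma_{mr}u(t_0,\cdot;\phi)-\sigma_a u(t_0,\cdot;\phi)$, which vanishes precisely at $a=mr$; so your ``same procedure at each $a$'' always has one exceptional shift, and you never establish the upper bound $Z\le N$ needed to close the continuity argument there.

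The paper avoids this by a different opening move: it invokes Lemma~\ref{peri-const}(i), which you never use. That lemma says $\sigma_a u(t_0,\cdot;u_0)-u(t_0,\cdot;\sigma_b\phi)$ has exactly $N$ simple zeros for \emph{every} $a,b\in S^1$. Since $u(t_0+mT,\cdot;u_0)$ is $C^1$-close to $\Sigma u(t_0,\cdot;\phi)$ for $m\gg 1$, one gets $Z(\sigma_a u(t_0,\cdot;u_0)-u(t_0+mT,\cdot;u_0))=N$ for all $a$ at once, hence the uniform upper bound $Z(w_a(t,\cdot))\le N$ for all $t\ge t_0$ and all $a\in S^1$, independently of whether $r$ is zero. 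Only then does the paper pass to the limit along $k_nT$ to obtain the matching lower bound for $a\ne\tilde a:=mr$, and finally handle the single exceptional shift $\tilde a$ by continuity against the already-established two-sided bound. The uniform upper bound is also what makes the Heine--Borel step yield a single $T^{*}$. Your architecture for (i) should be rebuilt around Lemma~\ref{peri-const}(i) in this way.
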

\medskip

\begin{proof}
(i) By Lemma \ref{peri-const}(i), { for any fixed $t_0>0$,
 $\sigma_a u(t_0,\cdot;u_0)-\sigma_bu(t_0,\cdot; \phi)$} has only simple zeros and $Z(\sigma_a u(t_0,\cdot;u_0)-u(t_0,\cdot;\sigma_b \phi))=N$ for any $a,b\in S^1$.
 Note that
 $$
 d(u(mT,\cdot;u_0),\tilde \omega(u_0))\to 0
 $$
 as $m\to\infty$. Therefore,
\begin{equation}\label{peri-ays-const}
 { Z(\sigma_a u(t_0,\cdot;u_0)- u(t_0+mT,\cdot;u_0))=Z(\sigma_a u(t_0,\cdot;u_0)-u(t_0,\cdot;\phi))}=N\quad \forall \, m\gg 1,\,  a\in S^1.
\end{equation}
It then follows from Lemma \ref{zero-number} that
\begin{equation}
\label{new-eq1}
Z(u(t+mT,\cdot;u_0)-\sigma_a u(t,\cdot;u_0))\le N,\quad \forall \,\, t\ge t_0,\, a\in S^1,
\end{equation}
for $m\gg 1$.

Now, fix $m$ in \eqref{peri-ays-const} and let $\tilde a$ be such that
$$
u(mT,\cdot;\phi)=\sigma_{\tilde a}\phi.
$$
Note also that
$$
u(k_nT,\cdot;u_0)\to \phi
$$
for some $k_n\to\infty$. Hence
$$
u(k_nT+mT,\cdot;u_0)-\sigma_{  a} u(k_nT,\cdot;u_0)\to  \sigma_{\tilde a}\phi -\sigma_{ a}\phi.
$$
This implies that
\begin{equation}
\label{new-eq2}
Z(u(t+mT,\cdot;u_0)-\sigma_{ a} u(t,\cdot;u_0))=N,
\quad \forall\,\, t\gg 1,\, a\not =\tilde a.
\end{equation}
Since there are $T_{\tilde a}>0$ and $N_{\tilde a}\in\mathbb{N}$ such that
\[
Z(u(t+mT,\cdot;u_0)-\sigma_{ \tilde a} u(t,\cdot;u_0))=N_{\tilde a},\quad \forall t\geq T_{\tilde a}.
\]
By the continuity of $Z(\cdot)$, there is $\delta_{\tilde a}>0$ such that
\[
Z(u(T_{\tilde a}+mT,\cdot;u_0)-\sigma_{ a} u(T_{\tilde a},\cdot;u_0))=N_{\tilde a}, \quad  |a-\tilde a|<\delta_{\tilde a}.
\]
This combining with \eqref{new-eq1} and \eqref{new-eq2} lead to $N_{\tilde a}=N$; moreover, one has
\begin{equation}
\label{new-eq3}
Z(u(t+mT,\cdot;u_0)-\sigma_{ a} u(t,\cdot;u_0))=N
\quad \forall\,\, t\gg 1,\, a\in S^1.
\end{equation}
Therefore, there is $T^*>0$ such that
$$
Z(u(t+mT,\cdot;u_0)-\sigma_a u(t,\cdot;u_0))=N
$$
for $t\ge T^*$ and $a\in S^1$.

\medskip
(ii) Fixed some $t_0>0$,  by Lemma \ref{peri-const}(ii)
$$
Z(u(t,\cdot;u_0)-u(t,\cdot;\phi))=N_0
$$
for all { $t\geq t_0$}. Choose $m\gg 1$ be such that
$$
\|\sigma_a u(mT,\cdot;u_0)-\phi\|\ll 1,\quad \forall a\in S^1.
$$
Then
$$
{ Z(u(t_0,\cdot;u_0)-\sigma_a u(t_0+mT,\cdot;u_0))=N_0.}
$$
Furthermore, by Lemma \ref{zero-number},
$$
Z(u(t,\cdot;u_0)-\sigma_a u(t+mT,\cdot;u_0))\le N_0
$$
for $t\ge t_0$ and $a\in S^1$.

Given $a\in S^1$, let
$v(t,x)=\sigma_a u(t+mT,x;u_0)-u(t,x;u_0)$. Then $v(t,x)$ satisfies
$$
v_t=v_{xx}+a(t,x)v_x+b(t,x)v,\quad x\in S^1,
$$
where
$$
a(t,x)=\int_0^1{\p_3 f(t,u(t,x;u_0),s(\sigma_a u_x(t+mT,x;u_0)-u_x(t,x;u_0)))}ds
$$
and
$$
b(t,x)\\=\int_0^1{\p_2 f(t,u(t,x;u_0)+s(\sigma_a u(t+mT,x;u_0)-u(t,x;u_0)),\sigma_au_x(t+mT,x;u_0))}ds.
$$
Note that $a(t,x)- \p_3 f(u(t,\cdot;\phi),0)\to 0$, $b(t,x)- \p_2 f(t,u(t,\cdot;\phi),0)\to 0$ and
$v(t,x)\to 0$ as $t\to\infty$. By zero number control on invariant spaces (see Lemma \ref{zero-inva}),
$$
Z(\sigma_a u(t+mT,\cdot;u_0)-u(t,\cdot;u_0))\ge N_0, \quad t\gg 1.
$$
(ii) then follows.
\end{proof}

\begin{proof} [Proof of Theorem \ref{peri-thm}]
 Let $u_0$ be a non-wandering point of the Poincar\'{e} map $P$. Then replace $T$, $t'_n$ by $mT$ and $k'_n$ in the proof of Theorem \ref{aut-thm} separately and let $\tau_n=m_0T\in \{0,T,2T,\cdots,(m-1)T\}$. {Theorem \ref{peri-thm} is then immediately obtained by using similar arguments as in Theorem \ref{aut-thm}}.
\end{proof}

\section{Concluding Remarks}

In this section, we give several concluding remarks on the assumptions,  the main results, and
  the techniques established in this paper.

  \medskip

\noindent {\bf Remark 1.} This remark is about the existence of compact global attractors.
In order to ensure that the equation \eqref{general-equa} admits a compact global attractor, a common admissible condition (see \cite{CCJ,Chen-P,JR2,Pol} for similar assumptions) is to assume that there exist constants $\epsilon>0$, $\delta>0$ and a continuous function $C(\cdot)$ maps $[0,\infty)$ to $[0,\infty)$, such that $f$ satisfies the following:
\begin{equation*}\label{dissipative-condition}
\begin{split}
&\forall \ l>0, \ p \in \mathbb{R}, \quad \sup_{(t,u)\in\mathbb{R}\times[-l,l]} \quad|f(t, u, p)| \leq C(l)\left(1+|p|^{2-\varepsilon}\right),  \\
& \forall\ |u| \geq \delta, \ t \in \mathbb{R}, \quad u f(t, u, 0) \leq 0.
\end{split}
\end{equation*}
Thus, \eqref{general-equa}+\eqref{autonomous} or \eqref{general-equa}+\eqref{periodic-eq} will admit a compact global attractor (see
\cite{CCJ,Pol}).
\vskip 2mm

\noindent {\bf Remark 2.} This remark is about the non-wandering set of \eqref{general-equa}+\eqref{autonomous}.
Let $f$ be such that \eqref{general-equa}+\eqref{autonomous} admits a compact global attractor. Then, Theorem \ref{aut-thm} tells us that the non-wandering set only consists of limit points. While in \cite{JR2}, to obtain a characterization of the structure of non-wandering set, the function $f=f(x,u,u_x)$ in \eqref{general-equa}+\eqref{autonomous} was further assumed to be specially ``{\it generic}" so that any equilibrium point or periodic point is hyperbolic, that there is no homoclinic orbit, and that all the heteroclinic orbits are transversal; and then, the non-wandering set \eqref{general-equa}+\eqref{autonomous} was proved to consist of finite many hyperbolic fixed points. Therefore, the characterization of non-wandering set we obtained here is more general than that in \cite{JR2}, when $f=f(u,u_x)$.

Since the approach of our research are different from that in \cite{JR2}, asymptotic behavior of the orbits nearby hyperbolic points as well as non-hyperbolic points {are} both considered here. For general $f=f(u,u_x)$, due to the complexity of the orbits near the non-hyperbolic points, the characterization {turns} to be much more difficult. In fact, we have already seen that the asymptotic behaviors near hyperbolic points are relatively clear, while the main difficulty is studying the asymptotic behaviors of orbits near non-hyperbolic points. For example, in Lemma \ref{fixed-constant}(ii) (see also Lemma \ref{period-center-constant}(ii) for time periodic case), if a non-wandering point approachs a hyperbolic fixed point, then itself must be a limit point. Thus, we only {considered} the non-wandering point which {would} not converge to a hyperbolic fixed point.
Some new methods were introduced in our study, for instance, we {used} the roughness of exponential dichotomy to study the asymptotic behavior near non-hyperbolic equilibria and periodic orbits; naturally, Sacker-Sell spectrum as well as the zero number control properties on the associated invariant spaces and local invariant manifolds {were} introduced;
moreover, we {proved} that the zero number function of the difference
of the solution under consideration and its limiting orbit does not change under space translation and time evolution
(see  Lemma \ref{auton-inhomo-cons}).

Additionally, Theorem \ref{aut-thm} shows that the structure of non-wandering set is esentially better than that of global attractor obtained in \cite{FRW,Ma-Na}.
\vskip 2mm

 \noindent {\bf Remark 3.} This remark is for the periodic system \eqref{general-equa}+\eqref{periodic-eq}.
 For the periodic system \eqref{general-equa}+\eqref{periodic-eq}, the results we obtained here pave the way to prove the Morse-Smale property of this system. Actually, if one assumes that all the points in the non-wandering set are hyperbolic, then obviously the non-wandering set contains only finitely many hyperbolic fixed points of the associated Poincar\'{e} map $P$. Moreover, by using similar arguments as in \cite{CCJ} or \cite{CR} (using the invariant nested cones method), it turns out to be not hard to obtain that the stable and unstable manifolds between connecting hyperbolic fixed points will intersect transversal. As a consequence, Morse-Smale property is expected to obtain.
\vskip 2mm

\noindent {\bf Remark 4.} This remark is about  scalar reaction diffusion equations with
separated boundary conditions. For the following reaction diffusion equation with separated boundary conditions,
\begin{equation}
\label{general-equa-1}
\begin{cases}
u_t=u_{xx}+f(t,u,u_x),\quad t>0,\,\, 0<x<2\pi\cr
(Bu)(t,0)=(Bu)(t,2\pi)=0,\quad t>0,
\end{cases}
\end{equation}
where $Bu=u$ or $Bu=u_x$,
long time behavior of bounded solutions have been thoroughly investigated
(see \cite{Hale} and the references therein). {For instance, it is proved  in \cite[Lemma 4.6]{CCJ} that, if \eqref{general-equa-1} is a time $T$-periodic system, then any non-wandering point of the associated Poincar\'{e} map lies in a $T$-periodic orbit of this system.}
 In contrast to the separated boundary condition cases, the  periodic boundary case we consider here is much more complicated. As a matter of fact, in the separated boundary condition cases, by using the non-increasing property of the zero number function for solutions of linear systems, the value $u(t,x)$  at $x=0$ or $2\pi$  naturally induces an order relation for sufficiently large time.
  Therefore, many already established results for \eqref{general-equa-1} are similar to those for one-dimensional ordinary differential  equations (see also \cite{Hale} and the literatures therein). However,
 no such order relation can be directly induced
 for the periodic boundary condition case in general, which makes periodic boundary condition case more difficult to study.

 Nevertheless, since the zero number function plays an essential role in dealing with these systems, we {also constructed} some order relation for the current system. In fact, we {constructed} a maximum order relation (see the proofs for Theorem \ref{aut-thm} and Theorem \ref{peri-thm}) between the orbit of a non-wandering point and the orbit of a non-wandering point push forwarded for some fixed time  based on three important  Lemmas \ref{fixed-constant}-\ref{auton-T-const} (resp. Lemmas \ref{period-center-constant}-\ref{peri-mT-const}) for system \eqref{general-equa}+\eqref{autonomous} (resp. \eqref{general-equa}+\eqref{periodic-eq}). It is this order relation that enables us to prove the main results stated in Theorem \ref{aut-thm} and Theorem \ref{peri-thm}. The maximum order relation we obtained here seems weaker than the order relation for separated boundary condition cases, since this order relation may only correct for that non-wandering points.  To construct this maximum order relation, we need to know the global property of solution of $u(t,x)$ on $S^1$ rather than it only on $0$ or $2\pi$. Therefore, the problem we  consider here requires more delicate analysis of the property of zero number function on the solution $u(t,x)$ with the help of the invariance of $S^1$-group action.
\vskip 2mm

\noindent {\bf Remark 5.} This last remark is about \eqref{general-equa} with general time dependent $f$.
 For general non-autonomous system, for instance, $f$ in \eqref{general-equa} is almost periodic in $t$, the authors of the current paper thoroughly investigated the structure of $\omega$-limit sets of \eqref{general-equa} {(in the sense of skew-product semiflow)}, in \cite{SWZ2,SWZ3,SWZ,SWZ4}. It was proved in \cite{SWZ4}, that any compact minimal invariant set can be residually embedded into an invariant set of some almost automorphically-forced flow on a circle. Moreover, the structure of the omega-limit set of any bounded orbit {was} given: it contains at most two minimal sets that cannot be obtained from each other by phase translation. A very interesting phenomenon was discovered, from a macro perspective, the structure of omega-limit set is simple; while from a microscopic perspective, the appearance of almost automotphically forced circle flow may make its dynamics very complicated. In some special cases (which also includes the equation on an interval with Dirichlet and Neumann boundary conditions), the flow on a minimal set topologically conjugates to an almost periodically-forced minimal flow on $\mathbb{R}$. The counterexample given in \cite{SWZ4} {showed} that even for quasi-periodic equations, these results {could} not be further improved in general. Therefore, the results in \cite{SWZ4} {revealed} that there are essential differences between time-periodic cases and non-periodic cases. One may also refer to \cite{SWZ2,SWZ3,SWZ,Zhou} for more delicate results about the structure of $\omega$-limit sets as well as minimal sets, in some special cases.

\section*{Appendix}

At the end of this article, we give an example to show that a non-wandering point of a general dynamical system  may not be limit point. This example is from MathOverflow constructed by Dr. Ilkka T\"{o}rm\"{a}.

Assume that $X=\{0,1\}^{\mathbb Z}$, then any point $x$ in $X$ can be written by $x=(x_i)_{i\in\mathbb{Z}}$, or by
\[
x=\cdots x_{-2}x_{-1}x_0x_1x_2\cdots,
\]
where each $x_i\in \{0,1\}$. The central $(2 k+1)-$ block of $x$ is $x_{[-k,k]}=x_{-k}x_{-k+1}\cdots x_k$. The metric $d$ on $X$ is defined as the following
$$
d(x, y)=\left\{\begin{array}{ll}{2^{-k}} & {\text { if } x \neq y \text { and } k \text { is maximal so that } x_{[-k, k]}=y_{[-k, k]}} \\ {0} & {\text { if } x=y}\end{array}\right.
$$
In other words, to measure the distance between $x$ and $y,$ we find the
largest $k$ for which the central $(2 k+1)$ -blocks of $x$ and $y$ agree, and use
$2^{-k}$ as the distance (with the conventions that if $x=y$ then $k=\infty$ and $2^{-\infty}=0$).

The map $\sigma$ on $X$ maps a point $x$ to
the point $y=\sigma(x)$ whose $i$-th coordinate is $y_{i}=x_{i+1}$. See the following picture for this map
\begin{equation*}
\begin{split}
  x \quad &=\cdots x_{-3}\,\,x_{-2}\,\,x_{-1}\,. x_{0}\,\,x_{1}\,\,x_{2}\,\,x_{3}\,\,\cdots \\
  \downarrow \sigma & \quad\quad\quad \, \,\, \swarrow\,\,\,\,\swarrow\,\,\,\,\swarrow\,\,\,\,\swarrow\,\,\,\swarrow\,\,\,\swarrow\\
 y=\sigma(x)&=\cdots  x_{-2}\,\,x_{-1}\,\,x_{0}\,\,. x_{1}\,\,x_{2}\,\,x_{3}\,\,\cdots
\end{split}
\end{equation*}
The composition of $\sigma$ with itself $k>0$ times $\sigma^k=\sigma\cdots\sigma$ shifts sequences $k$ places to the left, while $\sigma^{-k}=(\sigma^{-1})^k$ shifts the same amount to the right. It is easy to check that $(X,\sigma)$ is symbolic dynamical system (see \cite{LM} for the above conceptions in symbolic dynamical systems).

Consider a subspace $X_0\subset\{0,1\}^{\mathbb Z}$ defined by forbidden the words $10^{m}1^{n}0$ for all $m,n\in \mathbb{Z}^+$ (i.e., such blocks will not appear in $X_0$), then $(X_0,\sigma)$ is a subsystem of $(X,\sigma)$. By the definition of $X_0$, for any $x=(x_i)_{i\in\mathbb{Z}}\in X_0$, there exists $k\in \mathbb{Z}^+$ such that $\cdots =x_{-k-2}=x_{-k-1}=x_{-k}$ and $x_k=x_{k+1}=x_{k+2}=\cdots$. Thus, $X_0$ can only contain three classes of points:
\begin{itemize}
    \item [{\rm (i)}] $\cdots0001^n000\cdots$;
    \item [{\rm (ii)}] $\cdots 1110^n111\cdots$;
    \item [{\rm (iii)}]$\cdots 0001^n0^m111\cdots$;
\end{itemize}
where $m$, $n$ are non-negative integers.

Therefore, the only $\omega$-limit points of $(X_0,\sigma)$ are two uniform points: $\cdots 000.000\cdots$ and $\cdots111.111\cdots$. Choose $x^0=\cdots000.111\cdots$ and $x^n=\cdots000.1^n0^n111\cdots$, then $d(x^0,x^n)=2^{-n}$. Thus, $x^n$ can be arbitrary close to $x^0$ as $n\to \infty$. On the other hand, $\sigma^{2n}(x^n)=\cdots 0001^n0^n.111\cdots$ and $d(x^0,\sigma^{2n}(x^n))=2^{-n}\to 0$ as $n\to \infty$. Thus, $x^0$ is a non-wandering point, but is not a limit point.

\section*{Acknowledgments}
The authors would like to thank Siming Tu for informing us the above example and the reference \cite{LM}. {Dun Zhou would like to thank the Chinese Scholarship Council (201906845011) for its financial support during his overseas visit and express his gratitude to the Department of Mathematics and Statistics of Auburn University for  its hospitality.}

\end{document}